\numberwithin{equation}{section}
\theoremstyle{plain}
\newtheorem{lemma}{Lemma}[section]
\newtheorem{thm}{Theorem}[section]
\newtheorem{defn}{Definition}[section]
\newtheorem{prop}{Proposition}[section]
\newtheorem{assumption}{Assumption}[section]
\newtheorem{remark}{Remark}[section]
\newcommand{\eps}{\varepsilon}
\newcommand{\bq}{\mathbf{q}}
\newcommand{\by}{\mathbf{y}}
\newcommand{\bbT}{\mathbb{T}}
\newcommand{\bbP}{\mathbb{P}}
\newcommand{\bbR}{\mathbb{R}}
\DeclareMathOperator*{\argmin}{arg\,min}
\DeclareMathOperator*{\argmax}{arg\,max}
\def\@setcopyright{}
\def\serieslogo@{}
\begin{document}

\title{\textbf{Large tournament games}
%Large tournaments (rat races) with fixed pie.
\thanks{
Erhan Bayraktar is supported in part by the NSF under grant DMS-1613170 and by the Susan M. Smith Professorship. Jak\v sa Cvitani\'c  is supported in part by the NSF under grant DMS-DMS-1810807. Yuchong Zhang was supported by the NSF under grant DMS-1714607. We are grateful to Marcel Nutz for many stimulating discussions.
%Jak\v sa Cvitani\'{c} is supported in part by  the National Science Foundation under  grant DMS-1810807.
%Erhan Bayraktar is supported in part by the National Science Foundation under grant DMS-1613170 and by the Susan M. Smith Professorship. Yuchong Zhang was supported by the National Science Foundation under grant DMS-1714607 during the 2017-2018 academic year. YZ is also grateful to Marcel Nutz for many stimulating discussions and to the Department of Statistics, Columbia University where YZ was previously employed for its generous financial support.
}
}

 \author{Erhan Bayraktar\thanks{Department of Mathematics, University of Michigan, 530 Church Street, Ann Arbor, MI 48104, USA, erhan@umich.edu}, \ \ Jak\v sa Cvitani\'c\thanks{Division of the Humanities and Social Sciences, 1200 E. California Blvd, Pasadena, CA 91125, USA, cvitanic@hss.caltech.edu}  \ \ and \ \  Yuchong Zhang\thanks{Department of Statistical Sciences, University of Toronto, 100 St.\ George Street, Toronto, Ontario M5S 3G3, Canada, yuchong.zhang@utoronto.ca}
 }
 \maketitle
% \doublespacing

\noindent\textbf{Abstract.}
We consider a stochastic tournament game in which each player is rewarded based
on her  rank in terms of the completion time of her own task and is subject to cost of effort. When players are homogeneous and the rewards are purely rank dependent, the equilibrium has a surprisingly explicit characterization, which allows us to conduct comparative statics and obtain explicit solution to several optimal reward design problems. In the general case when the players are heterogenous and payoffs are not purely rank dependent, we prove the existence, uniqueness and stability of the Nash equilibrium of the associated mean field game, and the existence of an approximate Nash equilibrium of the finite-player game.  
%We find that:
%(i) the welfare may be increasing in cost of effort in its low range, as the cost reduces players' eagerness to work too hard;
%(ii) when the  total pie is small, the aggregate effort may be increasing in prize inequality,
%% unlike in \cite{FNS18};
%(iii)   the welfare may go up with a higher percentage of unskilled workers, as do the completion rates
%of the skilled and unskilled subpopulations.
%% may have a higher total welfare, but   the individuals may be worse off.
%%(iv) the reward function that maximizes the  welfare in the homogeneous case only depends on whether the player's rank is above or below a %threshold.
Our results have some potential economic implications; e.g., they lend support to government subsidies for  R\&D,  assuming the profits to be made are substantial.

%However, that is no longer true when maximizing a function of the completion time.   When players are inhomogeneous,  in our numerical examples the welfare is increasing in the fraction %of disadvantaged players up to a point, but it starts decreasing when the disadvantaged group becomes very large.

% The welfare also may be increasing in the percentage of the disadvantaged players.
%The welfare is increasing in the cost or distance of the non-elite players
%\end{abstract}

%\keywords{Tournaments, rank-based rewards, mechanism design,  mean field games}

%\medskip

\noindent \textbf{Keywords:} Tournaments, rank-based rewards, mechanism design,  mean field games,  bifurcation diagram of Nash equilibria.

%principal--agent problem, moral hazard, risk-management, volatility/portfolio selection.

\noindent \textbf{2000 Mathematics Subject Classification:}
 91A13, 91B40, 93E20

%\noindent\textbf{JEL classification:}
% C61, C73, D82, J33, M52

%\pagestyle{headings}

{\footnotesize{\tableofcontents}}

\section{Introduction}

%NEW 
To put our mathematical problem of a large population tournament into a real life context, consider the pre-Google time when a lot of players were competing to build a successful internet search engine. This is a game in which the future reward depends mostly on which player would come up first with a superior product, and not so much on the actual date of the invention. %, or the effort invested in its development. (we have cost of effort) 
This is the type of tournament games we mostly  focus on in this paper: many players with the rewards depending only on the ranking of the times needed to complete a task, in which the progress is gradual rather than sudden. 
Examples include:  a population of the same generation in
a country,  with each player  trying to attain the most advanced preparation
for their future career in the shortest amount of time;
%the population of  athletes  preparing for  a  given sport event (a tennis tournament or a marathon, for example), all trying to get to the highest possible level of fitness and skills
%before the start of the event;
a population of   entrepreneurs during the same boom period working on getting their product or start-up to the last stage of development; more generally,
it includes the \emph{racing} type of competitions in which the progress is attained gradually.%, rather than with a sudden breakthrough as in, for example, pharmaceutical R\&D races.

%In our tournament, each player exerts a costly effort to move her state process, modeled as a drifted Brownian motion, to a given target. A player receives a reward that depends on the ranking of her time of reaching the zero level, if, indeed, that level is
%reached before an exogenously fixed deadline (which may be infinite).
%The players are risk neutral -- they maximize the expected value of the reward minus the cost, and they may differ in their distance to the goal and the efficiency -- that is, in their cost parameter.

We formulate the large population tournament as a mean field game -- a stochastic game with infinitely many small players interacting through their aggregate distribution, introduced by \cite{LasryLions.06a,LasryLions.06b,LasryLions.07}, and \cite{HuangMalhameCaines.06, HuangMalhameCaines.07}. The advantage of the mean field formulation is that the equilibrium has an appealing decentralized structure: each player bases her decisions on her own state variable and a deterministic measure of completion time distribution that is obtained from the solution of a fixed point problem.
%The new feature in our paper is the application of this methodology to the analysis of tournaments in which the payoff of the individuals depends on the ranks of their completion times. 
One can then use the mean field game solution to construct an approximate Nash equilibrium of the finite-player game. Apart from our application to the analysis of tournaments, mean field games have been applied to macroeconomics by \cite{MR3268061},  analysis of bank runs by \cite{MR3679343}, systemic risk by \cite{MR3325083}, and analysis of queueing systems with strategic servers by \cite{BBC2018}. We also refer to the works of \cite{GueantLasryLions.11}, \cite{BensoussanFrehseYam.13} and \cite{CarmonaDelaRue.17a,CarmonaDelaRue.17b} for further reading.

{What sets our set-up apart} from the above is the rank-based feature of our problem: players are rewarded according to how their \emph{completion times}, which will be modeled as hitting times of controlled diffusions, are ranked. (Each player also has a cost of effort: i.e., she pays a cost for influencing the drift of her own diffusion.) 
Mean field games
with rank-based features are generally difficult
to analyze due to the lack of a priori regularity
of the rank function that depends on the equilibrium. Except for the terminal position ranking game of \cite{BZ16} and the one-stage Poisson game of \cite{MZ17}, one can usually only hope for abstract existence under the weak formulation as in \cite[Example 5.9]{CarmonaLacker15}. Hence one of the \emph{contributions} of our work is
the construction of a solvable mean field model, which is rare outside the classical linear-quadratic
framework. Moreover, even in the general setup without an explicit solution, we are still able to handle reward functions that are discontinuous in the rank variable. A key ingredient to our analysis is recognizing that the optimally controlled state density is a distortion of the density of Brownian motion by a factor related to the Cole-Hopf transformation of the value function. 
%We believe the change of measure technique used here can be useful for other problems.
Particle systems with rank-based interaction, but no strategic actions, have been widely studied
in the literature; see e.g.\ \cite{Shkolnikov.12}, \cite{MR3055258} and the references therein. A recent paper by
\cite{NadtochiyShkolnikov.17} considers particles interacting through hitting times and presents applications in the analysis of systemic risk.

In economics, the analysis of tournaments goes back to the seminal paper of \cite{LazearRosen81}, which has inspired many papers that followed, including  \cite{AkerlofHolden2012},  \cite{BDLR17}, \cite{Siegel2017} and \cite{FNS18}, among the more recent ones. Most of these works focus on finitely many players or static models.
Recently, there has been active research on models in which the randomness  is driven by  Poisson arrivals, and the player's effort affects the probability of a breakthrough;  see e.g.\ \cite{BEM}, \cite{HKL} and \cite{MZ17}. Those models are aimed more at applications to R\&D with sudden innovation breakthroughs, whereas our model is more appropriate for the cases in which the progress is incremental. 
%Those works include \cite{BEM}, \cite{HKL} and \cite{MZ17}. 
%The former two consider a two player game and the main focus is on information disclosure. The latter
% is closer to ours in that it considers infinitely many players   competing to achieve a breakthrough as soon as possible.
% But unlike in our model, the cost parameter does not affect the equilibrium outcome in their game.
%A closely related work on large population games with rank-based reward is~\cite{BZ16} where randomness is also driven by Brownian motions, but players are ranked according to their terminal positions instead of exit times. That work  obtains only  abstract existence results for games with common noise, whereas we derive semi-explicit solutions and their economic implications.

In our model, when the players are homogeneous in their starting point and efficiency (i.e.\ cost of effort), and the reward function is purely rank-based (except for the dependence on a given deadline), we find that the equilibrium has a semi-explicit formula. This enables us to analyze some interesting comparative statics. In particular, we find that
the aggregate welfare may be increasing in the cost of effort.
%This is because for the  aggregate welfare it does not matter which of the players rank high, only their ranking and their cost matter,
%and competing too hard with each other to reach the goal sooner may decrease the welfare.
%Thus,  the discouraging effect of a high cost of effort may be beneficial by preventing the players from working too hard. 
This indicates, for example, that for an economy in which building a start-up is a relatively complex endeavor, the complexity may not be such a bad thing -- if it was less complex,  too many entrepreneurs may put in an inefficiently high effort.
%However, if  the total reward pie increases sufficiently fast with the effort, this positive effect of higher costs may disappear.
We also find that, when the total pie is sufficiently large, high inequality in the rewards has a demoralizing effect for many players, a phenomenon emphasized in \cite{FNS18}.
However, in our model, when the pie is sufficiently small,
the higher prize inequality improves the welfare and the average effort to a certain extent. 
This is because there is  now an effect that decreases
competitiveness -- a higher percentage of players give up, and as  the rewards become more uneven it is  worthwhile for the players who do not give up to exert higher effort.
%,  bringing up the aggregate completion rate, the welfare and the average effort. 
%The players who do not give up are less discouraged by the prize inequality, because they compete within a smaller group. 
This effect cannot arise in the setup of  \cite{FNS18} in which the size of the pie is directly influenced by the effort.\footnote{See Section~\ref{sec:un-fixed pie}, however, for a discussion of this effect  in the case in which the size of the pie depends on the completion rate.} 
In the example of internet search engines (or social media sites, or computer operating systems) the total pie is large, but the rewards are uneven, suggesting that there is loss of efficiency in that many players get discouraged from applying effort.
%%\footnote{Of course, it could be argued that these are tournaments with too few players for our model to be a good approximation.}
This logic lends some support to having government subsidies, e.g., for\ R\&D in renewable energy,
if those subsidies make the rewards more even, assuming the profits to be made are substantial.

Having a semi-explicit equilibrium also allows us to tackle the problem of \emph{mechanism design}, an area that has not been widely explored in the mean field game literature due to the lack of tractability.
Similar to \cite{ElieMastroliaPossamai.16} and \cite{MZ17}, we also work with Stackelberg equilibrium: the principal or social planner acts as the leader and designs the reward first, and the agents act as followers who then form a Nash equilibrium within themselves.

We also consider an extension of the benchmark model where the total reward, the ``pie", may depend on the population completion rate, as in institutions in which the wealth created by production
increases with the completion rate of individual tasks. In the extreme case where the reward is independent of rank and increasing in the completion rate, our game is a contribution game
in which the interaction is not through the aggregate effort towards a single project as in, for example, \cite{Georgiadis15}, but through the completion rate of many parallel projects. It turns out that, with a completion rate-dependent pie, there may be multiple equilibria all of which can be characterized in a semi-explicit manner. Furthermore, by interpolating between a pure contribution game and a pure competition game, we are able to draw a \emph{bifurcation diagram of the equilibria}.

While we do not have an explicit characterization of equilibrium when the players are heterogeneous or when the reward function is not purely rank-based,  we prove its existence using Schauder or Brouwer's fixed point theorems, and uniqueness under an additional monotonicity condition. 
We also show the stability of the fixed point, which enables us
to approximate the mean field equilibrium by the solution to a finite dimensional system of nonlinear equations.
Finally, we construct an approximate equilibrium for a game with a large finite number of players
from the mean field equilibrium.
%We finish the paper with an example in
% the case of heterogeneous players, in which we we compute the equilibrium numerically, in a game with two groups of players, less and more disadvantaged.
% We find that the players might be better off when the population percentage of the  more disadvantaged players is higher.

The rest of the paper is structured as follows:
we present the model in Section~\ref{sec:setup}; the results for homogeneous players, including comparative statics, optimal reward design and extension to the completion rate-dependent pie in Section~\ref{sec:hom}; general existence, uniqueness, stability and the approximation
to finite-player games in Section~\ref{sec:general_theory}; and a numerical example with heterogeneous players in Section~\ref{sec:het}; 
%extensions to non-fixed pie and optimal reward schemes in Section 6;
%we conclude with Section~\ref{sec:conclusion}; 
the numerical method and some minor proofs are provided in Appendix.

\section{Model setup}\label{sec:setup}

We consider a game with a large number of independent players, in which each player can exert effort to move her project forward,
and is rewarded based on the time needed to complete the project and/or the ranking of that time relative to other players. The completion time is modeled as the first time that her project value or progress process reach a target level normalized to level zero. The tournament ends when a given deadline $T\in(0,\infty]$ is reached. The total amount of the reward, or the ``pie", is fixed for now, although we extend in Section~\ref{sec:un-fixed pie} some of our results to a pie that depends on the \emph{terminal completion rate}, i.e.\ the percentage of players who manage to meet the deadline.
 %NEW
 % As an illustrative example, 
  The reader can have in mind
 a population of technology firms competing to build a new app/website
where the reward depends on the relative time of completing a superior product (as in the search engines example from Introduction). 
   More generally, our setup might include  cases in which the players derive  utility from their ranking relative to their peers, even if the ranking is not rewarded by a monetary payment.

The reward a player gets for finishing at time $t$ with rank $r$ is given by a bounded function $R(t,r):\bbR_+\times [0,1]\mapsto \bbR$. We assume $R$ is decreasing in both variables.\footnote{Throughout the paper, increasing and decreasing are understood in the weak sense.} When $T<\infty$, we also impose $R(t,r)\equiv R_\infty$ for all $t>T$ so that everyone who fails to complete her project by the deadline receives the same minimum participation reward or incompletion penalty. Denote the set of reward functions by $\mathcal{R}$.
We start by assuming an infinite population of players, and later study how well it approximates a finite population in the limit.

\subsection{A single player's problem}

The optimization problem of a single player, in an infinite population of players, is given as follows.
%With infinitely many players,
%a single player's actions do not influence the  distribution of the completion times  of the population
Denote by
$\mu\in\mathcal{P}(\mathbb{R}_+)$ the  distribution of the completion times  of the population, and  by $F_\mu$  its cumulative distribution function (c.d.f.). The rank of  the single player who finishes  at time $t$ is given by $F_\mu(t)$, as this is  the fraction of players who finish before or at the same time as her.\footnote{In our game, $\mu$ will be  non-atomic,  hence whether the rank is measured using $F_\mu(t)$ or $F_\mu(t-)$ does not matter.}

Let $R_\mu(t):=R(t,F_{\mu} (t))$, which is then a  bounded and decreasing function of $t$.
We assume the player is risk-neutral and has quadratic cost, and that her state process $X$, representing her distance to completion, follows the stochastic differential equation (SDE)
\begin{equation}\label{SDE}
\begin{aligned}
&dX_s=-a_s ds+\sigma dB_s,
\end{aligned}
\end{equation}
where $B$ is a Brownian motion.
The player's effort process $a$ is \emph{admissible} if it is non-negative, progressively measurable with respect to the filtration of $B$,\footnote{The filtration can be larger; see Remark~\ref{rmk:filtration}.} and yields a unique strong solution of \eqref{SDE} up to the first passage time to level zero, and if that time  is non-atomic.\footnote{The non-atomic condition is for technical reasons, and can be removed if $R_\mu$ is lower semi-continuous for all $\mu\in\mathcal{P}(\mathbb{R}_+)$, which will be the case in our examples.}
%, which holds when e.g.\ $R$ is of the form \eqref{eq:PRBR}.
Thus,
the optimization problem of the single player has the value function
\begin{equation}\label{control-prob}
v(t,x)=v(t,x;\mu,c):=\sup_a E\left[R_\mu(\tau)-\int_t^{\tau \wedge T} c a_s^2ds \bigg | X_t=x \right],
\end{equation}
where $c>0$ is the cost  parameter and
$\tau=\inf\{s\geq t: X_s=0\}$ is the completion time.

While we assume no discounting for tractability, it can be interpreted as the case in which  the cost of effort and the reward increase exactly at the same rate at which the discount factor decreases.
For example, the salaries  in a certain profession increase over time, as does the cost of education and job searching. If those increases are exactly offset by the players' discount factor, the value function is  as above.\footnote{We can also consider the case in which the reward may decrease with the interval in which completion occurs; see Remark~\ref{3.2}.}

%%The assumptions of no discounting and of quadratic cost makes this problem tractable, and we present the solution next.
For any $y\neq 0$, denote by  $\tau^\circ_{y}$ the first passage time of a Brownian motion to level $y$.
%, i.e., $\tau^\circ_{y}=\inf\{s\geq 0: B_s=y\}.$
Its density is well known and  given by (see, e.g.,\ page 197 of \cite{KS.91})
\begin{equation}\label{FPTpdf}
f_{\tau^\circ_{y}}(s)=\frac{|y|}{s\sqrt{2\pi s}}\exp\left(-\frac{y^2}{2s}\right), \ s> 0.
\end{equation}
Also introduce
\begin{equation}\label{u}
u(t,x)=u(t,x;\mu,c):=E\left[\exp\left(\frac{R_\mu(t+\tau^\circ_{x/\sigma})}{2c\sigma^2}\right)\right]
=E\left[\exp\left(\frac{R_\mu(t+\frac{x^2}{\sigma^2}\tau^\circ_{1})}{2c\sigma^2}\right)\right].
\end{equation}
%%\begin{remark}\label{rmk:scaling}
%The second representation in \eqref{u} follows from the fact that $\tau^\circ_{\lambda y}\sim \lambda^2 \tau^\circ_{y}$ for any $\lambda>0$, which is a consequence of the scaling property $f_{\tau^\circ_{\lambda y}}\left(\lambda^2s\right)=\frac{1}{\lambda^2}f_{\tau^\circ_{y}}(s)$.
%%\[P^\circ(\tau^\circ_{\lambda m}\leq t)=\int_0^t f_{\tau^\circ_{\lambda m}}(z) dz=\int_0^{\frac{t}{\lambda^2}} f_{\tau^\circ_{\lambda m}}(\lambda^2 s)\lambda^2 ds=\int_0^{\frac{t}{\lambda^2}} f_{\tau^\circ_{m}}(s)ds=P^{\circ}(\tau^\circ_m\leq \frac{t}{\lambda^2})=P^\circ(\lambda^2 \tau^\circ_m\leq t).\]
%%\end{remark}
We have the following result.

\begin{prop}\label{prop:taupdf}
The value function $v(t,x)$ of the single player's problem is given by
$$v(t,x)=2c\sigma^2 \ln(u(t,x)).$$
There exists an optimal (Markovian/feedback) action $a^*$ that attains $v(t,x)$, given by
\[a^*(t,x)=-\frac{1}{2c}v_x(t,x) 1_{\{x>0\}}=-\sigma^2 \frac{u_x(t,x)}{u(t,x)} 1_{\{x>0\}}.\]
The corresponding completion time
$\tau^*=\tau^{*,t,x}$ is non-atomic and has probability density function (p.d.f.)
\begin{equation}\label{taupdf}
f_{\tau^*}(s)=\frac{u(s,0)}{u(t,x)}f_{\tau^\circ_{x/\sigma}}(s-t), \quad s\ge t.
\end{equation}
\end{prop}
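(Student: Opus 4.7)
The plan is a Cole--Hopf linearization of the HJB equation combined with a Doob $h$-transform for the law of $\tau^*$. Formally, the HJB on $(0,T)\times(0,\infty)$ reads
\[ v_t + \tfrac{\sigma^2}{2}v_{xx} + \sup_{a\geq 0}\bigl\{-a v_x - c a^2\bigr\} = 0, \]
with boundary $v(t,0) = R_\mu(t)$ and (if $T<\infty$) terminal $v(T,x)=R_\infty$. Since $R_\mu$ is decreasing, a comparison argument yields $v_x \leq 0$, so the constrained supremum is attained at $a^* = -v_x/(2c) \geq 0$, reducing the HJB to $v_t + v_x^2/(4c) + \tfrac{\sigma^2}{2}v_{xx} = 0$. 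The substitution $u = \exp(v/(2c\sigma^2))$ linearizes this to the backward heat equation $u_t + \tfrac{\sigma^2}{2}u_{xx} = 0$ with boundary data $\exp(R_\mu(t)/(2c\sigma^2))$ at $x=0$, and Feynman--Kac identifies $u$ with the probabilistic formula \eqref{u}. Heat kernel smoothing and the boundedness of $R$ ensure that $u$ is $C^{1,2}$, positive, and bounded away from $0$ and $\infty$ on the interior; monotonicity gives $u_x \leq 0$ (so $a^*\geq 0$) and the feedback drift $\sigma^2 u_x/u$ is locally Lipschitz.

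I would then verify the formula directly through $u$, bypassing delicate boundary analysis of $v$. For any admissible $a$, apply It\^o's formula to $v(s, X_s) := 2c\sigma^2 \log u(s, X_s)$ between $t$ and $s\wedge\tau\wedge T$: using $u_t + \tfrac{\sigma^2}{2}u_{xx} = 0$ together with the pointwise bound $\sup_{a \geq 0}\bigl[-2c\sigma^2 a\,u_x/u - c a^2\bigr] = c\sigma^4 u_x^2/u^2$, attained at $a = -\sigma^2 u_x/u$, one finds that
\[ v(s\wedge\tau, X_{s\wedge\tau}) - \int_t^{s\wedge\tau} c a_r^2\,dr \]
is a supermartingale, and a martingale for $a = a^*(r, X_r) = -\sigma^2 u_x/u(r, X_r)$. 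Setting $s=T$ and using $v(\cdot, 0) = R_\mu$ on $[0,T]$ and $v(T, x)=R_\infty$ (which equals $R_\mu(\tau)$ on $\{\tau>T\}$ by the convention on $R$) gives $v(t, x) \geq E\bigl[R_\mu(\tau) - \int_t^{\tau\wedge T} c a_r^2\,dr\bigr]$ for every admissible $a$, with equality under $a^*$ once the feedback SDE is checked to admit a unique strong solution up to $\tau^*$; this follows from the local Lipschitz regularity of $u_x/u$ on $(0,T)\times(0,\infty)$.

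For the density of $\tau^*$, note that the optimal drift $\sigma^2 u_x/u$ is the logarithmic gradient of the space-time harmonic function $u$, so $X^*$ is precisely the Doob $h$-transform of $\sigma$-Brownian motion by $u$. Explicitly, with $X^0_s = x + \sigma(B_s - B_t)$ and $\tau^0 = \inf\{s \geq t : X^0_s = 0\}$, the process $M_s := u(s\wedge\tau^0, X^0_{s\wedge\tau^0})/u(t, x)$ is a bounded positive $P$-martingale (by It\^o and the heat equation), and Girsanov's theorem shows that the law of $\tau^*$ under $P$ equals the law of $\tau^0$ under $M\cdot P$. Hence for any bounded measurable $\phi$,
\[ E[\phi(\tau^*)] = E\!\left[\frac{u(\tau^0, 0)}{u(t, x)}\,\phi(\tau^0)\right], \]
which combined with the explicit density \eqref{FPTpdf} of $\tau^0 - t$ yields the stated formula for $f_{\tau^*}$; absolute continuity of $f_{\tau^\circ_{x/\sigma}}$ forces $\tau^*$ to be non-atomic, and $\int_t^\infty f_{\tau^*}(s)\,ds = 1$ is exactly the defining identity \eqref{u} for $u(t,x)$.

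The main obstacle I anticipate is technical rather than conceptual: justifying that $u(s, 0) = \lim_{x\downarrow 0}u(s, x) = \exp(R_\mu(s)/(2c\sigma^2))$ for $s\leq T$ despite possible discontinuities of $R_\mu$, and handling the corner singularity at $(T, 0)$ when $R_\mu(T)>R_\infty$. Monotonicity of $R_\mu$ forces its discontinuity set to be countable, so dominated convergence in the representation \eqref{u}, combined with the non-atomicity clause built into the definition of admissibility, should suffice to close the argument at the boundary.
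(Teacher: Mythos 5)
Your proposal is correct and follows essentially the same route as the paper's proof: Cole--Hopf linearization plus Feynman--Kac to identify $v=2c\sigma^2\ln u$, local Lipschitz regularity of $\sigma^2 u_x/u$ away from $x=0$ for well-posedness of the controlled SDE, a Girsanov/$h$-transform with the bounded martingale $u(s\wedge\tau^0,X^0_{s\wedge\tau^0})/u(t,x)$ for the density of $\tau^*$, and an It\^o-based verification with a localization at level $\eps$ to handle the boundary. The technical obstacle you flag at the end (boundary convergence despite discontinuities of $R_\mu$) is resolved in the paper exactly as you suggest, via the countability of the discontinuity set and the non-atomicity built into admissibility.
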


\begin{proof}
Let $u$ be defined by \eqref{u}, $v:=2c\sigma^2 \ln u$ and $a^\ast:=-v_x/(2c)1_{\{x>0\}}=-\sigma^2 (u_x/u) 1_{\{x>0\}}$.

Step 1. Show that $v$ is a solution to the Hamilton-Jacobi-Bellman (HJB) equation:
\begin{equation}\label{HJB}
v_t+\sup_{a}\left\{-av_x+\frac{1}{2}\sigma^2 v_{xx}-ca^2\right\}=0,
\end{equation}
with boundary condition $v(t,0)=R_\mu(t)$ for $0\le t\le T$, and when $T<\infty$, also the terminal condition $v(T,x)=R_\infty=R_\mu(T+)$ for $x>0$.

By the first order condition, \eqref{HJB} is equivalent to
 \[v_t+\frac{1}{4c}(v_x)^2+\frac{1}{2}\sigma^2 v_{xx}=0,\]
 where the supremum is attained by $a^*$ pointwise.
To show $v$ is a solution to the above equation with the desired boundary and terminal conditions, it is equivalent to show its Cole-Hopf transformation $u=e^{(2c\sigma^2)^{-1}v}$ satisfies
 \begin{equation}\label{heateq}
u_t+\frac{1}{2}\sigma^2 u_{xx}=0,
\end{equation}
with
\[u(t,0)=\exp\left(\frac{R_\mu(t)}{2c\sigma^2}\right), \quad u(T,x)=\exp\left(\frac{R_\infty}{2c\sigma^2}\right) \text{ if } T<\infty.\]
But this is immediate from the Feynman-Kac theorem.

Step 2. Show that the SDE \eqref{SDE} controlled by $a^*\ge 0$ has a unique strong solution $X^*(=X^{*,t,x})$ up to $\tau^*(=\tau^{*,t,x})$, the first passage time of $X^*$ to level zero.

By the monotonicity and boundedness of $R_\mu$, $u$ is decreasing in both $t$ and $x$, and
\begin{equation}\label{u-bdd}
\exp\left(\frac{R_\infty}{2c\sigma^2}\right) \le  u(t,x)\le
\exp\left(\frac{R(0,0)}{2c\sigma^2}\right).
\end{equation}
Since $\tau^\circ_1$ has a smooth density, even if  $R_\mu$ is not continuous, it is not hard to see that $u\in C^{1,2}(\bbR_+\times(0,\infty))$. In fact, direct differentiation of \eqref{u} yields
\begin{align*}
u_x(t,x)= \frac{1}{x}E\left[\exp\left(\frac{R_\mu(t+\frac{x^2}{\sigma^2} \tau^\circ_{1})}{2c\sigma^2}\right)\left(1-\frac{1}{\tau^\circ_{1}}\right)\right],
\end{align*}
and
\[u_{xx}(t,x)= \frac{1}{x^2}E\left[\exp\left(\frac{R_\mu(t+\frac{x^2}{\sigma^2} \tau^\circ_{1})}{2c\sigma^2}\right)\left(\frac{1}{ (\tau^\circ_{1})^2}-\frac{3}{\tau^\circ_{1}}\right)\right].\]
Note that $1/\tau_{1}^{\circ}$ has finite moments.\footnote{The moments of $1/\tau_{1}^{\circ}$ can be computed using \[P(1/\tau_1^{\circ}\geq t)=P(\tau^\circ_1\leq 1/t)=2P(W(1/t)\geq 1)=2\int_{\sqrt{t}}^\infty \frac{1}{\sqrt{2\pi}}e^{-z^2/2} dz,\]
and $E[(1/\tau_1^{\circ})^p]=\int_0^\infty p t^{p-1} P(1/\tau_1^{\circ}\geq t)dt$.
In particular, we have $E[1/\tau_1^{\circ}]=1$ and $E[(1/\tau_1^{\circ})^2]=3$.}
So $u_x$ and $u_{xx}$ are bounded on any region away from $x=0$. It follows that $a^*$ is bounded, and Lipschitz continuous in $x$ on any compact subset of $(0,\infty)$. Standard SDE theory then implies the existence of a unique strong solution up to $\tau^*$.

Step 3. Show that $\tau^*$ is non-atomic and has the desired p.d.f.\ \eqref{taupdf}.

We employ a change of probability measure argument. Let $B^\circ$ be a Brownian motion under some probability measure $\bbP^\circ$. Define $X_s:=x+\sigma (B^\circ_s-B^\circ_t)$ and $\tau:=\inf\{s\ge t: X_s=0\}$.
Let $u$ be the function given by \eqref{u}. Define
\[Z_s:=\frac{u(s\wedge \tau, X_{s\wedge \tau})}{u(t,x)}.\]
We have $Z_t=1$ and that the paths of $Z$ are $\bbP^\circ$-a.s.\ continuous. To see the latter, observe that if $R_\mu(s)$ is continuous at $s=\tau(\omega)$, then $\lim_{s\rightarrow \tau(\omega)}Z_s=\exp\left(\frac{R_\mu(\tau(\omega))}{2c\sigma^2}\right)/u(t,x)=Z_{\tau(\omega)}$. Since $R_\mu$ is decreasing, it has at most countably many points of discontinuity. Since $\tau$ is non-atomic under $\bbP^\circ$, with $\bbP^\circ$-probability one, $R_\mu(s)$ is continuous at $s=\tau$, and thus, the $s\mapsto Z_s$ is continuous at $s=\tau$. Path continuity before time $\tau$ is trivial.

By It\^o's lemma,
\[dZ_s=\begin{cases}
\frac{\sigma u_x(s,X_s)}{u(t,x)} dB^{\circ}_s, & t\le s<\tau,\\
0, & s\ge \tau,
\end{cases}\]
where the drift of $Z$ prior to time $\tau$ is killed using \eqref{heateq}. Since $Z$ is bounded, it is a $\bbP^\circ$-martingale. Hence we can define a probability measure $\bbP$ via $d\bbP/d\bbP^\circ=Z_\infty=Z_\tau$. Since $Z_\tau$ is strictly positive, $\bbP$ is equivalent to $\bbP^\circ$.

Next, we rewrite the dynamics of $Z$ as $dZ_s=Z_s dY_s$ where
\[dY_s=\begin{cases}
\frac{\sigma u_x}{u}(s,X_s) dB^{\circ}_s, & t\le s<\tau.\\
0, & s\ge \tau.
\end{cases}\]
We see that $Z$ is the stochastic exponential of $Y$. (We could set $Z_s\equiv 1$ and $dY_s\equiv 0$ for $s<t$ if necessary.) Girsanov Theorem implies that
\[B_s:=B^\circ_s-1_{\{s\ge t\}}\int_t^{s\wedge \tau} \frac{\sigma u_x}{u}(r,X_r) dr\]
is a $\bbP$-Brownian motion. Replacing $B^\circ$ by $B$ in the dynamics of $X$, we obtain
\[dX_s=\sigma^2 \frac{u_x}{u}(s,X_s)1_{\{s<\tau\}} ds+\sigma dB_s=-a^*(s,X_s)ds+\sigma dB_s, \quad s\ge t.\]
That is, $X$ has the same distribution under $\bbP$ as the process $X^\ast$ under the candidate  optimal effort $a^*$.
Under $\bbP$, we have
\[\bbP(\tau\in ds)=\frac{u(s,0)}{u(t,x)} \bbP^\circ(\tau\in ds), \quad s\ge t,\]
from which we conclude that $\tau$ (and hence $\tau^*$) is non-atomic and has the desired p.d.f.

Step 4. Verify that $v$ is indeed the value function and that $a^*$ is indeed the optimal effort.

The verification argument is standard except for an extra localization step to take care of the potential singularity of $v_x$ near $x=0$. Specifically, fixing an arbitrary admissible control $a$ and the associated state process $X$, we apply It\^{o}'s lemma to $v(s, X_s)$ from time $t$ to $\tau_\eps \wedge T$, where $\eps>0$ and $\tau_\eps:=\{s\ge t: X_s=\eps\}$. This leads to
\begin{align*}
Ev(\tau_\eps\wedge T, X_{\tau_\eps\wedge T})&=v(t,x)+E\int_t^{\tau_\eps\wedge T} \left(v_t(s,X_s)-a_sv_x(s,X_s)+\frac{1}{2}\sigma^2 v_{xx}(s, X_s)\right)ds\\
&\le v(t,x)+E\int_t^{\tau_\eps\wedge T} ca_s^2 ds,
\end{align*}
where the inequality holds because $v$ satisfies the HJB equation \eqref{HJB}. It follows that
\begin{equation}\label{eq:verification}
v(t,x)\ge E v(\tau_\eps\wedge T, X_{\tau_\eps\wedge T})-E\int_t^{\tau_\eps\wedge T} c a_s^2 ds.
\end{equation}
Now, as $\eps\searrow 0$, since $\tau_\eps$ is increasing and bounded by $\inf\{s\ge t: x+\sigma (B_s-B_t)=0\}$, it has a limit $\tau_0\le \tau :=\inf \{s\ge t: X_s=0\}$.
We claim that $\tau_0=\tau$. Indeed, if $X_{\tau_0}>0$, then by path continuity, $\eps=X_{\tau_\eps}>\delta$ for some $\delta>0$, for all $\eps$ sufficiently small, which is a contradiction.

By monotone convergence theorem, $E\int_t^{\tau_\eps\wedge T} ca_s^2 ds$ converges to $E\int_t^{\tau\wedge T} ca_s^2 ds$ as $\eps\searrow 0$. To obtain the limit of $Ev(\tau_\eps\wedge T, X_{\tau_\eps\wedge T})$, note that $\tau_\eps\wedge T\rightarrow \tau\wedge T$ and $X_{\tau_\eps\wedge T}\rightarrow X_{\tau\wedge T}$. If $\tau>T$, then $X_{\tau\wedge T}=X_T>0$, and $v(\tau_\eps\wedge T, X_{\tau_\eps\wedge T})\rightarrow v(T, X_T)=R_\infty=R_\mu(\tau)$ by the continuity of $v$ in $\bbR_+\times (0,\infty)$. If $\tau\le T$, then $X_{\tau\wedge T}=X_{\tau}=0$, and $v(\tau_\eps\wedge T, X_{\tau_\eps\wedge T})\rightarrow v(\tau, 0)=R_\mu(\tau)$
if $\tau$ is a continuity point of $R_\mu$. Since $R_\mu$ has at most countably many points of discontinuity (by monotonicity) and $\tau$ is non-atomic by admissibility, we have, after combining the two cases, that $v(\tau_\eps\wedge T, X_{\tau_\eps\wedge T})$ converges to $R_\mu(\tau)$ a.s.\footnote{If $R_\mu$ is lower semi-continuous, then we can directly obtain $\liminf_{\eps}v(\tau_\eps\wedge T, X_{\tau_\eps\wedge T})\ge R_\mu(\tau)$ without the non-atomic property of $\tau$.} Bounded convergence theorem then implies $Ev(\tau_\eps\wedge T, X_{\tau_\eps\wedge T})\rightarrow ER_\mu(\tau)$.
So, letting $\eps\searrow 0$ in \eqref{eq:verification} yields
\[v(t,x)\ge E\left[R_\mu(\tau)-\int_t^{\tau\wedge T} c a_s^2 ds\right].\]
Since $a$ is an arbitrary admissible control, taking supremum over $a$ leads to the conclusion that $v$ dominates the value function.

Replacing  $a$ by $a^*$ which is admissible by Steps 2 and 3, all inequalities above become equalities, which implies that $v(t,x)=E\left[R_\mu(\tau^*)-\int_t^{\tau^*\wedge T} c (a^*(s,X^*_s))^2 ds\right]$ is dominated by the value function. Putting everything  together, we conclude the verification proof.
\end{proof}

\begin{remark}
One can also obtain \eqref{taupdf} via an analytic argument. To see this, note that the Fokker-Planck equation
\[\rho_s(s,y)=[a^*(s,y)\rho_y(s,y)]_y+\frac{1}{2}\sigma^2 \rho_{yy}(s,y), \quad \rho(t,y)=\delta_x(y)\]
with absorbing boundary condition $\rho(s,0)=0$ has a semi-explicit solution
\[\rho(s,y)=\frac{u(s,y)}{u(t,x)}\varphi(s,y),\]
where $\varphi$ is the solution to
\[\varphi_s=\frac{1}{2}\sigma^2\varphi_{yy}, \quad \varphi(t,y)=\delta_x(y), \quad \varphi(s,0)=0.\]
%$\varphi$ is the density of $X_s=x+\sigma (B^\circ_s-B^\circ_t)$ subject to absorption at zero.
From this, one can derive \eqref{taupdf} by repeated integration by parts:
\begin{align*}
\bbP(\tau\in ds)&=-\frac{d}{ds}\bbP(\tau> s)=-\frac{d}{ds} \int_0^\infty \rho(s,y)dy=-\int_0^\infty \rho_s(s,y)dy\\
&=-\frac{1}{u(t,x)}\int_0^\infty (u_s\varphi +u \varphi_s)(s,y) dy \\
&=-\frac{\sigma^2}{2u(t,x)}\int_0^\infty (-u_{yy}\varphi +u \varphi_{yy})(s,y) dy\\
&=\frac{\sigma^2}{2u(t,x)}u(s,0)\varphi_y(s,0)
\end{align*}
and
\[\bbP^\circ(\tau\in ds)=-\int_0^\infty \varphi_s(s,y)dy=-\frac{1}{2}\sigma^2\int_0^\infty \varphi_{yy}(s,y)dy=\frac{1}{2}\sigma^2\varphi_y(s,0).\]
\end{remark}

\begin{remark}\label{rmk:filtration}
From the the verification step of the proof of Proposition~\ref{prop:taupdf}, it is not hard to see that the feedback control $a^*$ remains optimal if admissible controls are progressively measurable with respect to a filtration larger than the one generated by the driving Brownian motion $B$, as long as $B$ remains a Brownian motion for that filtration. Later we will use this fact in the construction of an approximate Nash equilibrium for the $N$-player game in which each player observes not only her private state, but also the states of her competitors.
\end{remark}

{Having computed the best response of a single player to a given distribution $\mu$, we are now ready to study the Nash equilibrium of this infinite population game. Supposing every player uses the optimal feedback control $a^*=a^*(\cdot; \mu,c)$ given by Propsition~\ref{prop:taupdf}, we obtain a new population completion time distribution. If this new distribution is equal to $\mu$, then we say $\mu$ is an equilibrium (completion time distribution). We first consider a case in which equilibrium can be fully characterized 
in Section~\ref{sec:hom}, 
and then discuss the general case in Section~\ref{sec:general_theory}.}

\section{Homogeneous players}\label{sec:hom}

In this section, we assume a unit mass of homogeneous players with independent Brownian motions driving their state processes.
By homogeneous, we mean
the players all start playing at the same time $t=0$ and the same distance $x_0>0$ from the goal, and they all have the same cost parameter $c$.
Moreover,  we specialize to the purely rank-based reward functions of the form
\begin{equation}\label{eq:PRBR}
R(t,r)=1_{\{t\le T\}}H(r)+1_{\{t>T\}}R_\infty,
\end{equation}
where $H\ge R_\infty$ is a bounded decreasing function.
That is, the only time dependence is through the deadline $T$. We will show below that there always exists a unique equilibrium in semi-explicit form.
The proof is based on considering equation \eqref{taupdf} as a fixed point equation  for the p.d.f.\ of the population completion time.

\subsection{Explicit characterization and properties of the equilibrium}

Denote by $T^\mu_r$  the $r$-th quantile of a completion time distribution $\mu$, and by $N(x)$ the c.d.f.\ of the standard normal distribution. We have the following results.

\begin{thm}\label{thm:explicit_soln}
Suppose that $R\in\mathcal{R}$ is of the form \eqref{eq:PRBR}.

(i) For  $T<\infty$, the unique equilibrium completion time distribution $\mu$ has a quantile function given   by
\begin{equation}\label{eq:NEquantile}
T^\mu_r=F^{-1}_{\tau^\circ_{x_0/\sigma}}\left(\frac{1-F_{\tau^\circ_{x_0/\sigma}}(T)}{1-F_\mu(T)}\int_0^r \exp\left(\frac{R_\infty-H(z)}{2c\sigma^2}\right)dz\right), \quad r\in [0, F_\mu(T)],
\end{equation}
where
\begin{equation}\label{eq:BMFPTcdf}
F_{\tau^\circ_{x_0/\sigma}}(t)=2\left(1-N\left(\frac{x_0}{\sigma \sqrt{t}}\right)\right),
\end{equation}
%\[F^{-1}_{\tau^\circ_{x_0/\sigma}}(y)=\left(\frac{x}{\sigma N^{-1}\left(1-y/2\right)}\right)^2\]
and the equilibrium terminal completion rate $F_\mu(T)\in (0,1)$ is the unique solution of
\begin{equation}\label{eq:NErate}
F_{\tau^\circ_{x_0/\sigma}}(T)=\frac{1-F_{\tau^\circ_{x_0/\sigma}}(T)}{1-F_\mu(T)}\int_0^{F_\mu(T)} \exp\left(\frac{R_\infty-H(z)}{2c\sigma^2}\right)dz.
\end{equation}
Moreover, the value of the game is given by
\begin{equation}\label{eq:NEvalue}
V=v(0,x_0;\mu,c)=R_\infty+2c\sigma^2\ln \left(\frac{1-F_{\tau^\circ_{x_0/\sigma}}(T)}{1-F_\mu(T)}\right).
\end{equation}

(ii) For $T=\infty$, the unique equilibrium completion time distribution $\mu$ has a  quantile function given  by
\begin{equation}\label{eq:infNEquantile}
T^\mu_r=F^{-1}_{\tau^\circ_{x_0/\sigma}}\left(\frac{\int_0^r  \exp\left(-\frac{H(z)}{2c\sigma^2}\right) dz}{\int_0^1\exp\left(-\frac{H(z)}{2c\sigma^2}\right) dz}\right).
\end{equation}
Moreover, the value of the game is given by
\begin{equation}\label{eq:infNEvalue}
V_\infty=v(0,x_0;\mu,c)=-2c\sigma^2\ln \left(\int_0^1\exp\left(-\frac{H(z)}{2c\sigma^2}\right) dz\right).
\end{equation}
\end{thm}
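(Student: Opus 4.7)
The plan is to reduce the equilibrium problem to a scalar fixed-point equation for the terminal completion rate $F_\mu(T)$. By Proposition~\ref{prop:taupdf}, a single player facing the aggregate distribution $\mu$ and starting at $(0,x_0)$ has a best-response completion-time density equal to $\frac{u(s,0)}{u(0,x_0)} f_{\tau^\circ_{x_0/\sigma}}(s)$. Under the purely rank-based form~\eqref{eq:PRBR}, the boundary value $u(s,0)$ equals $\exp(H(F_\mu(s))/(2c\sigma^2))$ for $s\le T$ and $\exp(R_\infty/(2c\sigma^2))$ for $s>T$. Writing $K:=1/u(0,x_0)$ and imposing the consistency $f_\mu=f_{\tau^*}$ on $[0,T]$, the equilibrium condition becomes the separable relation
\[\exp\!\left(-\frac{H(F_\mu(s))}{2c\sigma^2}\right)dF_\mu(s)=K\,dF_{\tau^\circ_{x_0/\sigma}}(s), \qquad s\in[0,T].\]

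I would integrate both sides from $0$ to $t\le T$ and perform the substitution $z=F_\mu(s)$ on the left-hand side; this requires only that $F_\mu$ be absolutely continuous and monotone, which the equilibrium density formula guarantees. The result is $\int_0^{F_\mu(t)}\exp(-H(z)/(2c\sigma^2))\,dz=K\,F_{\tau^\circ_{x_0/\sigma}}(t)$, which, upon inversion, yields the quantile formula up to the unknown constant $K$. To pin down $K$, I use the identity $1/K=u(0,x_0)$ and split the defining expectation at $T$: the part on $[0,T]$ equals $F_\mu(T)/K$ by the fixed-point relation itself, and the part on $(T,\infty)$ is $\exp(R_\infty/(2c\sigma^2))(1-F_{\tau^\circ_{x_0/\sigma}}(T))$. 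Solving the resulting linear equation produces $1/K=\exp(R_\infty/(2c\sigma^2))(1-F_{\tau^\circ_{x_0/\sigma}}(T))/(1-F_\mu(T))$; substituting back gives \eqref{eq:NEquantile}, and evaluating at $t=T,r=F_\mu(T)$ produces the scalar consistency equation \eqref{eq:NErate}.

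For existence and uniqueness of the terminal completion rate, I would rewrite \eqref{eq:NErate} as $h(p)=\beta$, where $\beta:=F_{\tau^\circ_{x_0/\sigma}}(T)\in(0,1)$ and $h(p):=\beta p+(1-\beta)\int_0^p\exp((R_\infty-H(z))/(2c\sigma^2))\,dz$. Since $H\ge R_\infty$ is bounded, $h$ is continuous and strictly increasing on $[0,1]$ with $h(0)=0<\beta<h(1)$, so a unique $F_\mu(T)\in(0,1)$ exists. The quantile \eqref{eq:NEquantile} then determines $\mu$ on $[0,T]$ uniquely, and on $(T,\infty)$ the constancy of $R_\mu$ makes the specific shape of $\mu$ irrelevant for the fixed-point. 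Formula \eqref{eq:NEvalue} follows immediately from $V=v(0,x_0;\mu,c)=2c\sigma^2\ln(1/K)$. Part~(ii) proceeds identically with $T=\infty$: completion occurs almost surely, so $F_\mu(\infty)=F_{\tau^\circ_{x_0/\sigma}}(\infty)=1$, and the same integration identifies $K=\int_0^1\exp(-H(z)/(2c\sigma^2))\,dz$ in one step, yielding \eqref{eq:infNEquantile} and \eqref{eq:infNEvalue}.

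The main delicate point is the existence direction: after extracting the candidate $\mu$, one must check that feeding it back into Proposition~\ref{prop:taupdf} reproduces the same $\mu$. The derivation above is reversible once $F_\mu(T)$ is fixed—monotonicity, continuity, and absolute continuity of the candidate $F_\mu$ are inherited from the smoothness of $F_{\tau^\circ_{x_0/\sigma}}$ together with the strict positivity and boundedness of $\exp(-H/(2c\sigma^2))$—so verifying the closure of the loop reduces to unwinding the same substitution in the opposite direction. I expect no essential obstacle beyond carefully justifying the change of variable and the monotonicity argument for $h$.
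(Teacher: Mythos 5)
Your proposal is correct and follows essentially the same route as the paper: both reduce the equilibrium to the fixed-point identity $f_\mu = \frac{u(\cdot,0)}{u(0,x_0)}f_{\tau^\circ_{x_0/\sigma}}$ from Proposition~\ref{prop:taupdf}, exploit the resulting separability in the rank variable to integrate it explicitly, and pin down the normalizing constant via the terminal completion rate, yielding the same scalar equation \eqref{eq:NErate}. Your presentation via the c.d.f.\ and a change of variables (rather than the paper's quantile function $y(r)=F_{\tau^\circ_{x_0/\sigma}}(T^\mu_r)$ and its derivative), and your reformulation $h(p)=\beta$ in place of the paper's $\phi(r)=\frac{1}{1-r}\int_0^r e^{(R_\infty-H(z))/(2c\sigma^2)}dz$, are only cosmetic variants of the same argument.
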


\begin{proof}
(i) Suppose $T<\infty$.
By \eqref{taupdf},
the fixed point equation (in terms of the p.d.f.\ of the completion time distribution) is
\[f_\mu(t)=\frac{u(t,0;\mu,c)}{u(0,x_0;\mu,c)}f_{\tau^\circ_{x_0/\sigma}}(t), \quad t\in[0,T].\]
Denote $y(r)=F_{\tau^\circ_{x_0/\sigma}}(T^\mu_r)$.
Since any fixed point $\mu$ has positive density on $[0,T]$, $y(r)$ is differentiable on $[0, F_\mu(T)]$ with
\begin{align*}
y'(r)&=\frac{f_{\tau^\circ_{x_0/\sigma}}(T^\mu_r)}{f_\mu(T^\mu_r)}=\frac{u(0,x_0;\mu,c)}{u(T^\mu_r,0;\mu,c)}\\
&=\frac{\int_0^T \exp\left(\frac{H(F_\mu(s))}{2c\sigma^2}\right)dF_{\tau^\circ_{x_0/\sigma}}(s)+\exp\left(\frac{R_\infty}{2c\sigma^2}\right)(1-F_{\tau^\circ_{x_0/\sigma}}(T))}{\exp\left(\frac{H(F_\mu(T^\mu_r))}{2c\sigma^2}\right)}\\
&=\frac{\int_0^{F_\mu(T)} \exp\left(\frac{H(z)}{2c\sigma^2}\right)y'(z)dz+\exp\left(\frac{R_\infty}{2c\sigma^2}\right)(1-F_{\tau^\circ_{x_0/\sigma}}(T))}{\exp\left(\frac{H(r)}{2c\sigma^2}\right)}.
\end{align*}
Observe that $\exp\left(\frac{H(r)}{2c\sigma^2}\right) y'(r)$ is independent of $r$, hence is constant for $r\in [0, F_\mu(T)]$. Let $C$ be this constant. We have
\[C=CF_\mu(T)+\exp\left(\frac{R_\infty}{2c\sigma^2}\right)(1-F_{\tau^\circ_{x_0/\sigma}}(T)).\]
Since $F_{\tau^\circ_{x_0/\sigma}}(T)<1$, the above equation implies $F_\mu(T)<1$ and
\[\exp\left(\frac{H(r)}{2c\sigma^2}\right) y'(r)\equiv C=\frac{1-F_{\tau^\circ_{x_0/\sigma}}(T)}{1-F_\mu(T)}\exp\left(\frac{R_\infty}{2c\sigma^2}\right), \quad r\in[0, F_\mu(T)].\]
It follows that
\begin{equation*}
y(r)=\frac{1-F_{\tau^\circ_{x_0/\sigma}}(T)}{1-F_\mu(T)}\int_0^r \exp\left(\frac{R_\infty-H(z)}{2c\sigma^2}\right)dz, \quad r\in [0, F_\mu(T)].
\end{equation*}
Setting $r=F_\mu(T)$ yields  equation \eqref{eq:NErate} for $F_\mu(T)$. The existence and uniqueness of solution follows from the fact that
\begin{equation}\label{eq:phi}
\phi(r):=\frac{1}{1-r}\int_0^r  \exp\left(\frac{R_\infty-H(z)}{2c\sigma^2}\right)dz
\end{equation}
is a continuous, strictly increasing function on $[0,1)$ satisfying $\phi(0)=0$ and $\lim_{r\rightarrow 1}\phi(r)=\infty$. Finally, we have
$v(0,x_0;\mu,c)=2c\sigma^2 \ln u(0,x_0;\mu,c)$ and
\begin{align*}
u(0,x_0;\mu,c)%&=\int_0^{F_\mu(T)} \exp\left(\frac{H(z)}{2c\sigma^2}\right)y'(z)dz+\exp\left(\frac{R_\infty}{2c\sigma^2}\right)(1-F_{\tau^\circ_{x_0/\sigma}}(T))\\
&=C F_\mu(T)+\exp\left(\frac{R_\infty}{2c\sigma^2}\right)(1-F_{\tau^\circ_{x_0/\sigma}}(T))=\frac{1-F_{\tau^\circ_{x_0/\sigma}}(T)}{1-F_\mu(T)}\exp\left(\frac{R_\infty}{2c\sigma^2}\right)
\end{align*}
from which \eqref{eq:NEvalue} follows.
%\[v(0,x;\mu,c)=2c\sigma^2 \ln u(0,x_0;\mu,c)=R_\infty+2c\sigma^2\ln \left(\frac{1-F_{\tau^\circ_{x_0/\sigma}}(T)}{1-F_\mu(T)}\right).\]

(ii) Suppose $T=
\infty$.
First, note that any possible fixed point $\mu$ should have strictly increasing c.d.f.\ which satisfies $F_\mu(\infty)=1$ since the effort is non-negative and the first passage time of a Brownian motion is almost surely finite. Let $y(r)$ be defined as above. A similar calculation shows that
\[\exp\left(\frac{H(r)}{2c\sigma^2}\right) y'(r)=\int_0^{1} \exp\left(\frac{H(z)}{2c\sigma^2}\right)y'(z)dz\]
is constant (denoted by $C$) and
\[F_{\tau^\circ_{x_0/\sigma}}(T^\mu_r)=y(r)=C\int_0^r  \exp\left(-\frac{H(z)}{2c\sigma^2}\right) dz.\]
 We can then use $y(1)=1$ to find that $C=\left(\int_0^1\exp\left(-\frac{H(z)}{2c\sigma^2}\right) dz\right)^{-1}$. Having obtained $y(r)$, we have $T^\mu_r=F^{-1}_{\tau^\circ_{x_0/\sigma}}(y(r))$. The equilibrium value is
$v(0,x_0;\mu,c)=2c\sigma^2 \ln u(0,x_0;\mu,c)=2c\sigma^2 \ln C$.
\end{proof}

\begin{remark}\label{rmk:Tconvergence}
Observe that the finite horizon equilibrium converges to the infinite horizon equilibrium as $T\rightarrow \infty$. To see this, use \eqref{eq:NErate} to rewrite the finite horizon equilibrium quantile function and value of the game as
\[T^\mu_r(T)=F^{-1}_{\tau^\circ_{x_0/\sigma}}\left(\frac{\int_0^r \exp\left(\frac{R_\infty-H(z)}{2c\sigma^2}\right)dz}{\int_0^{F_\mu(T)} \exp\left(\frac{R_\infty-H(z)}{2c\sigma^2}\right)dz}F_{\tau^\circ_{x_0/\sigma}}(T)\right), \quad r\in[0, F_\mu(T)]\]
and
\[V(T)=R_\infty+2c\sigma^2 \ln\left(\frac{F_{\tau^\circ_{x_0/\sigma}}(T)}{\int_0^{F_\mu(T)} \exp\left(\frac{R_\infty-H(z)}{2c\sigma^2}\right)dz}\right)=2c\sigma^2 \ln\left(\frac{F_{\tau^\circ_{x_0/\sigma}}(T)}{\int_0^{F_\mu(T)} \exp\left(\frac{-H(z)}{2c\sigma^2}\right)dz}\right).\]
As $T\rightarrow\infty$, $F_{\tau^\circ_{x_0/\sigma}}(T)/(1-F_{\tau^\circ_{x_0/\sigma}}(T))\rightarrow \infty$. For \eqref{eq:NErate} to hold, we must have $\lim_{T\rightarrow \infty} F_\mu(T)=1$. It follows that the finite horizon equilibrium $T^\mu_r(T)$ and $V(T)$ converge to their infinite horizon counterparts as $T\rightarrow \infty$, where the convergence is pointwise for the quantile function.
\end{remark}

{\begin{remark}\label{3.2}
More generally, if $R\in\mathcal{R}$ is of the form
\[R(t,r)=\sum_{k=1}^n 1_{\{T_{k-1}<t\le T_k\}} \delta_k H(r)+1_{\{t>T\}}R_\infty,\]
where $0=T_0<T_1<\cdots<T_n=T<\infty$, then with $\alpha_k:=F_{\tau^\circ_{x_0/\sigma}}(T_k)$ and $\beta_k:=F_\mu(T_k)$, the equilibrium quantile function is given by
\[T^\mu_r=F^{-1}_{\tau^\circ_{x_0/\sigma}}\left(\frac{1-\alpha_n}{1-\beta_n} \sum_{k=1}^n \int_{\beta_{k-1}\wedge r}^{\beta_k\wedge r} \exp\left(\frac{R_\infty- \delta_k H(z)}{2c\sigma^2}\right)dz\right), \quad r\in [0, \beta_n],\]
where $\beta_1, \ldots, \beta_n$ can be found by solving the following nonlinear system of equations:
\[\alpha_k-\alpha_{k-1}=\frac{1-\alpha_n}{1-\beta_n}\int_{\beta_{k-1}}^{\beta_k} \exp\left(\frac{R_\infty-\delta_k H(z)}{2c\sigma^2}\right)dz, \quad k=1, \ldots, n.\]
The game value in this case is $V=R_\infty+2c\sigma^2\ln \left(\frac{1-\alpha_n}{1-\beta_n}\right).$
%For general time-dependent reward function $R(t,r)$, we have when $T<\infty$,
%\[T^\mu_r=F^{-1}_{\tau^\circ_{x_0/\sigma}}\left(\frac{1-F_{\tau^\circ_{x_0/\sigma}}(T)}{1-F_\mu(T)}\int_0^r \exp\left(\frac{R_\infty-R(T^\mu_z,z)}{2c\sigma^2}\right)dz\right), \quad r\in [0, F_\mu(T)].\]
Similarly, a semi-explicit formula can be derived for the case $T=\infty$.
\end{remark}

The  semi-explicit solution allows us to obtain some comparative statics analytically. The proof is based on elementary calculus and is provided in the appendix. 

\begin{prop}\label{prop:homcs}
Suppose that $R\in\mathcal{R}$ is of the form \eqref{eq:PRBR} with $H$ continuous. Then,  the terminal equilibrium completion rate $\beta:=F_\mu(T)$ and the game value $V$ (written as $V_\infty$ when $T=\infty$) have the following properties.
\begin{itemize}
\item[(i)] When $T<\infty$, $\beta$ is increasing in $T$ and decreasing in $x_0$ and $c$, where the monotonicity in $T$ and $x_0$ are strict. Moreover,
\[
\lim_{T\rightarrow 0} \beta =0, \ \lim_{T\rightarrow \infty} \beta =1, \ \lim_{x_0\rightarrow 0}\beta=1,\ \lim_{x_0\rightarrow \infty}\beta=0, \ \lim_{c\rightarrow 0} \beta=1, \ \lim_{c\rightarrow \infty} \beta=F_{\tau^\circ_{x_0/\sigma}}(T).
\]
\item[(ii)] When $T<\infty$, $V$ is increasing in $T$ and decreasing in $x_0$. Moreover,
\[\lim_{T\rightarrow 0} V =R_\infty, \quad \lim_{T\rightarrow \infty} V =V_\infty, \quad \lim_{x_0\rightarrow 0}V=V_\infty, \quad \lim_{x_0\rightarrow \infty}V=R_\infty.\]
%\YZ{$V$ is not monotone in $c$. What are the limits?
%\[\lim_{c\rightarrow 0} V=???,  \quad \lim_{c\rightarrow \infty} V=R_\infty +\int_0^{F_{\tau^\circ_{x_0/\sigma}}(T)}  (H(z)-R_\infty) dz.???\]}
%\[\lim_{x_0\rightarrow 0}V=-2c\sigma^2 \ln \left\{\int_0^1 \exp\left(-\frac{H(z)}{2c\sigma^2}\right)dz\right\},\quad \lim_{x_0\rightarrow \infty}V=R_\infty.\]
\item[(iii)] $V_\infty$ is independent of $x_0$, and increasing in $c$ with
\[\lim_{c\rightarrow 0} V_\infty=H(1-),\footnote{$H(r-)$ denotes the left limit of $H$ at $r$. The one-sided limit exists because $H$ is bounded and monotone.} \quad \lim_{c\rightarrow \infty} V_\infty=\int_0^1 H(r)dr.\]
\item[(iv)] If $H_1\ge H_2$, then $V_\infty(H_1)\ge V_\infty(H_2)$. When $T<\infty$ and $R_i=1_{\{t\le T\}} H_i+1_{\{t>T\}} R_{i,\infty}$, if $H_1-R_{1,\infty} \ge H_2-R_{2,\infty}$, then
$\beta(R_1)\ge \beta(R_2)$ and $V(R_1)-R_{1,\infty}\ge V(R_2)-R_{2,\infty}$.
\end{itemize}
\end{prop}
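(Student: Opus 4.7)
My plan hinges on the master equation \eqref{eq:NErate} derived inside the proof of Theorem~\ref{thm:explicit_soln}. Set $\alpha := F_{\tau^\circ_{x_0/\sigma}}(T)$ and, extending \eqref{eq:phi},
$$\phi(\beta;c) := \frac{\psi(\beta;c)}{1-\beta}, \qquad \psi(\beta;c) := \int_0^\beta \exp\left(\frac{R_\infty - H(z)}{2c\sigma^2}\right) dz,$$
so that $\beta := F_\mu(T)$ is the unique solution of $\phi(\beta;c) = \alpha/(1-\alpha)$ on $[0,1)$; continuity of $H$ makes $\phi(\cdot;c)$ continuous and strictly increasing from $0$ to $\infty$. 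For (i), I record how each side depends on the parameters: $\alpha$ is strictly increasing in $T$, strictly decreasing in $x_0$, and independent of $c$, with one-sided limits $0,1,1,0$ as $T\to 0,\infty$ and $x_0\to\infty,0$ read off from \eqref{eq:BMFPTcdf}; and for each fixed $r\in(0,1)$, $c\mapsto\phi(r;c)$ is non-decreasing because $R_\infty - H(z)\le 0$ makes the integrand non-decreasing in $c$. Inverting the master equation in $\beta$ yields all monotonicity claims; the limits in $T$ and $x_0$ are immediate, and the $c$ limits follow by dominated convergence applied to $\phi(\beta;c) = \alpha/(1-\alpha)$: the integrand tends to $1$ as $c\to\infty$, giving $\phi(\beta;c)\to\beta/(1-\beta)$ and $\beta\to\alpha$, and to $0$ on $\{H>R_\infty\}$ as $c\to 0$, forcing $\beta\to 1$.

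For (ii), the central manipulation is the pair of algebraic identities
$$\frac{1-\alpha}{1-\beta} = \frac{1}{1-\beta+\psi(\beta;c)} = \frac{\alpha}{\psi(\beta;c)},$$
obtained by rearranging \eqref{eq:NErate}, which turn \eqref{eq:NEvalue} into
$$V - R_\infty = -2c\sigma^2\ln(1-\beta+\psi(\beta;c)) = 2c\sigma^2\ln(\alpha/\psi(\beta;c)).$$
Since $\psi'(z;c) = e^{(R_\infty-H(z))/(2c\sigma^2)}\le 1$, the map $\beta\mapsto 1-\beta+\psi(\beta;c)$ is non-increasing, so the monotonicity of $V$ in $T$ and $x_0$ follows from that of $\beta$ established in (i). The limits of $V$ follow by substituting $\beta\to 0,1$ in the first identity and noting that $\psi(1;c)\,e^{-R_\infty/(2c\sigma^2)} = \int_0^1 e^{-H(z)/(2c\sigma^2)}\,dz$, which recovers \eqref{eq:infNEvalue} and gives $V\to V_\infty$.

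Part (iii) is read off directly from \eqref{eq:infNEvalue}: $V_\infty$ has no $x_0$ dependence, and with $\lambda := 1/(2c\sigma^2)$, $V_\infty = -\lambda^{-1}\ln\int_0^1 e^{-\lambda H(z)}\,dz$ is a classical entropic/log-MGF functional, non-increasing in $\lambda$ (by differentiation or by Jensen), with limits $\int_0^1 H\,dz$ as $\lambda\to 0$ by Taylor expansion and $\essinf H = H(1-)$ as $\lambda\to\infty$ by a Laplace-type argument; converting back via $c = 1/(2\lambda\sigma^2)$ gives all three claims. For (iv), the infinite-horizon inequality is immediate from the integral formula for $V_\infty$. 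In the finite-horizon case, $\phi$ and $\psi$ depend on $R$ only through $H - R_\infty$, so the hypothesis $H_1 - R_{1,\infty}\ge H_2 - R_{2,\infty}$ gives $\phi_1\le\phi_2$ pointwise; inverting the master equation (whose right-hand side $\alpha/(1-\alpha)$ does not depend on $R$) yields $\beta_1\ge\beta_2$. Rearranging \eqref{eq:NErate} as $\psi_i(\beta_i;c) = (1-\beta_i)\alpha/(1-\alpha)$, the ordering $\beta_1\ge\beta_2$ forces $\psi_1(\beta_1;c)\le\psi_2(\beta_2;c)$, and plugging into $V - R_\infty = 2c\sigma^2\ln(\alpha/\psi(\beta;c))$ concludes the proof. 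I expect the main bookkeeping subtlety in part (iv), where changes to $R$ move both $\beta$ and $\psi(\beta)$ in ways that could a priori cancel; the paired identities for $V - R_\infty$ above are precisely what make the cancellation transparent.
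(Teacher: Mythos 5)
Your proposal is correct, and parts (i), (iii) and (iv) run essentially parallel to the paper's argument: monotonicity of $\beta$ is obtained by inverting the fixed-point equation $\phi(\beta;c)=\alpha/(1-\alpha)$ exactly as in the paper, your log-moment-generating-function reading of \eqref{eq:infNEvalue} in (iii) is just a repackaging of the paper's computation (the paper differentiates in $c$ and uses $\ln x\le x-1$, which is the same Jensen inequality), and (iv) is the same comparison of $\phi(\cdot;R_1)$ and $\phi(\cdot;R_2)$. Where you genuinely diverge is part (ii). The paper proves that $\zeta:=(1-F_{\tau^\circ_{x_0/\sigma}}(T))/(1-\beta)$ is monotone by a two-step difference estimate: it first bounds $\beta(T_2)-\beta(T_1)$ from below using $H\ge R_\infty$ and then feeds that bound into the difference $\zeta(T_2)-\zeta(T_1)$ (and repeats the computation for $x_0$). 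You instead use the fixed-point equation to eliminate $\alpha$ and write $V-R_\infty=-2c\sigma^2\ln\bigl(1-\beta+\psi(\beta;c)\bigr)$, a function of $\beta$ alone that is non-decreasing because $\psi'\le 1$; this reduces all of (ii) to the monotonicity of $\beta$ already established in (i), handles $T$ and $x_0$ simultaneously, and turns the limits into one-line substitutions $\beta\to 0$ and $\beta\to 1$. That is a cleaner and shorter route than the paper's.

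One caveat, shared with the paper's own terse treatment of the limits: your argument that the integrand vanishing on $\{H>R_\infty\}$ as $c\to 0$ ``forces $\beta\to 1$'' implicitly assumes $H>R_\infty$ almost everywhere on $[0,1)$. If $H\equiv R_\infty$ on an interval $[r^*,1]$ with $r^*<1$, the limiting equation becomes $(\beta^*-r^*)^+/(1-\beta^*)=\alpha/(1-\alpha)$ and $\beta^*<1$. This is a defect of the statement as written rather than of your proof (the paper's proof merely asserts the limits follow from ``straightforward computations''), but it is worth being aware that the $c\to 0$ limit requires the reward to strictly exceed the participation reward for almost every rank.
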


The results in items (i), (ii) and (iv) are intuitive and not surprising. Somewhat surprising, at first sight, might be the fact stated in (iii) that
the value $V_\infty$ of the infinite horizon game is increasing in the cost parameter $c$. Let us  consider the limiting cases.
When the cost goes to zero,  all the players will apply very  high effort and the value will converge  to the  value  $H(1)$ (if $H$ is left-continuous at $1$) of the lowest ranked player.
On the other hand, when the cost goes to infinity, they will apply very small effort and the project values be driven by pure noise,
which results in an aggregate gain of $\int_0^\infty H(F_\mu(t)) dF_\mu(t)=\int_0^1 H(r)dr$, where the equilibrium measure $\mu$ is distributed as $\tau^\circ_{x_0/\sigma}$ in the limit.
Since $H$ is decreasing, the averaged value is higher than  $H(1)$.

In effect, low cost incentivizes   players to apply too much effort in competing
with each other, without resulting in good ranking. It's  a rat race without winners. For the aggregate welfare it does not matter which of the players finish early,
and competing too hard with each other to reach the goal sooner  decreases the welfare. Thus,  in this game it is beneficial if the players are discouraged  from working too hard by having a high cost of effort.
For example, if building a start-up was too easy, too many entrepreneurs may apply too high effort.
 However,with a finite deadline,  as we will see in the next subsection, the effect of the cost on the value may be increasing or decreasing. Moreover,  if the pie was not fixed, but, for example, if it grew with the population completion rate and the speed of completion, then  lower cost may lead to higher value.

We end the theoretical analysis with a result on the expected total effort, needed later below for further comparative statics.
\begin{prop}\label{prop:expected-effort}
Let $T<\infty$. Suppose that $R\in\mathcal{R}$ is of the form \eqref{eq:PRBR}.
Let $\mu$ be the unique equilibrium completion time distribution given by Theorem~\ref{thm:explicit_soln}(i), and $a^*_t:=a^*(t,X_t;\mu,c)$, $X$, $\tau$ be the associated equilibrium action, state process and completion time, respectively.
Then, the expected total effort in equilibrium is given by
\begin{equation}\label{eq:expected-effort}
E \int_0^{\tau\wedge T} a^*_t dt=\frac{x_0\left(F_\mu(T)-F_{\tau^\circ_{x_0/\sigma}}(T)\right)}{1-F_{\tau^\circ_{x_0/\sigma}}(T)}.
\end{equation}
\end{prop}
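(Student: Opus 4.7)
The plan is to express the expected total effort in terms of the expected terminal position of the equilibrium state process, and then evaluate the latter by a change-of-measure argument. Starting from the SDE \eqref{SDE} controlled by $a^*$, integrating between $0$ and $\tau\wedge T$ and taking expectations (which annihilates the Brownian integral, since $\tau\wedge T\le T$ is bounded and, by Step 2 of the proof of Proposition~\ref{prop:taupdf}, $a^*$ is bounded) gives
\[E\int_0^{\tau\wedge T} a^*_s\,ds = x_0 - E[X_{\tau\wedge T}].\]
Since $X_\tau=0$ on $\{\tau\le T\}$, this further reduces to $E\int_0^{\tau\wedge T} a^*_s\,ds = x_0 - E[X_T\,1_{\{\tau>T\}}]$, so the task becomes computing $E[X_T\,1_{\{\tau>T\}}]$ under the equilibrium law.

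For this, I would invoke the Girsanov change of measure from Step 3 of the proof of Proposition~\ref{prop:taupdf}: under the reference measure $\bbP^\circ$, the process $X$ is a Brownian motion with diffusion coefficient $\sigma$ starting at $x_0$, and the Radon–Nikodym density on $\mathcal{F}_T$ equals $Z_T=u(T\wedge\tau,X_{T\wedge\tau})/u(0,x_0;\mu,c)$. On the event $\{\tau>T\}$ we have $T\wedge\tau=T$ and $X_T>0$; and by the terminal condition in Step 1 of the same proof, $u(T,x)\equiv \exp(R_\infty/(2c\sigma^2))$ for $x>0$. Hence $Z_T\,1_{\{\tau>T\}}$ is the deterministic constant $\exp(R_\infty/(2c\sigma^2))/u(0,x_0;\mu,c)$ times $1_{\{\tau>T\}}$, so
\[E[X_T\,1_{\{\tau>T\}}] = \frac{\exp(R_\infty/(2c\sigma^2))}{u(0,x_0;\mu,c)}\, E^{\bbP^\circ}[X_T\,1_{\{\tau>T\}}].\]
The $\bbP^\circ$-expectation is obtained from optional stopping applied to the $\bbP^\circ$-Brownian motion $X$ and the bounded stopping time $\tau\wedge T$: since $X_{\tau\wedge T}=X_T\,1_{\{\tau>T\}}$ (once more using $X_\tau=0$), one gets $E^{\bbP^\circ}[X_T\,1_{\{\tau>T\}}]=x_0$.

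To conclude, I would substitute the closed-form expression for $u(0,x_0;\mu,c)$ derived inside the proof of Theorem~\ref{thm:explicit_soln}(i),
\[u(0,x_0;\mu,c) = \frac{1 - F_{\tau^\circ_{x_0/\sigma}}(T)}{1 - F_\mu(T)}\,\exp(R_\infty/(2c\sigma^2)),\]
which, after cancellation of $\exp(R_\infty/(2c\sigma^2))$, yields $E[X_T\,1_{\{\tau>T\}}] = x_0(1-F_\mu(T))/(1-F_{\tau^\circ_{x_0/\sigma}}(T))$. Plugging back into the earlier identity gives \eqref{eq:expected-effort}. The only step requiring a little care is the combined use of the change of measure on the $\mathcal{F}_T$-measurable event $\{\tau>T\}$ and the constancy of $u(T,\cdot)$ on $(0,\infty)$; once those two observations are in place, the remaining calculation is routine, and the clean exponential cancellation is what produces the simple final formula.
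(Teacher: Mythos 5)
Your proposal is correct and follows essentially the same route as the paper's proof: reduce to $x_0-E[X_{\tau\wedge T}]$, apply the Girsanov change of measure from Proposition~\ref{prop:taupdf} so that the density on $\{\tau>T\}$ is the constant $\exp(R_\infty/(2c\sigma^2))/u(0,x_0;\mu,c)$, use optional stopping under $\bbP^\circ$ to get $x_0$, and substitute the closed form of $u(0,x_0;\mu,c)$ from Theorem~\ref{thm:explicit_soln}(i). The only cosmetic difference is that you evaluate the density at $T\wedge\tau$ via the terminal condition $u(T,x)\equiv\exp(R_\infty/(2c\sigma^2))$, whereas the paper evaluates $Z_\infty=u(\tau,X_\tau)/u(0,x_0)$ using $R_\mu(\tau)=R_\infty$ on $\{\tau>T\}$; both give the same constant.
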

\begin{proof}
From $X_{\tau\wedge T}=x_0-\int_0^{\tau\wedge T} a^*_t dt+\sigma B_{\tau\wedge T}$, we get
\[E \int_0^{\tau\wedge T} a^*_t dt=x_0-EX_{\tau\wedge T}.\]
To compute $EX_{\tau\wedge T}$, we make use of the change of measure introduced in the proof of Proposition~\ref{prop:taupdf}. Let $\bbP^\circ$ and $Z_\infty$ as in step 3 of the proof of Proposition~\ref{prop:taupdf} with $(t,x)=(0,x_0)$. We have
\begin{align*}
EX_{\tau\wedge T}&=E[1_{\{\tau>T\}}X_T]=E^{\bbP^\circ}[Z_\infty 1_{\{\tau>T\}}X_T]\\
&=E^{\bbP^\circ}\left[\frac{u(\tau,X_\tau;\mu,c)}{u(0,x_0;\mu,c)} 1_{\{\tau>T\}}X_T\right]=E^{\bbP^\circ}\left[\frac{\exp\left(\frac{R_\infty}{2c\sigma^2}\right)}{u(0,x_0;\mu,c)} 1_{\{\tau>T\}}X_T\right]\\
&=\frac{\exp\left(\frac{R_\infty}{2c\sigma^2}\right)}{u(0,x_0;\mu,c)}E^{\bbP^\circ}X_{\tau\wedge T}
\end{align*}
Since $X$ is a martingale under $\bbP^\circ$, we have $E^{\bbP^\circ}X_{\tau\wedge T}=x_0$. By Theorem~\ref{thm:explicit_soln}(i),
\[u(0,x_0;\mu,c)=\exp\left(\frac{v(0,x_0;\mu,c)}{2c\sigma^2}\right)=\frac{1-F_{\tau^\circ_{x_0/\sigma}}(T)}{1-F_\mu(T)}\exp\left(\frac{R_\infty}{2c\sigma^2}\right).\]
Putting all pieces together, we have
\[E \int_0^{\tau\wedge T} a^*_t dt=x_0-\frac{1-F_\mu(T)}{1-F_{\tau^\circ_{x_0/\sigma}}(T)}x_0=\frac{x_0\left(F_\mu(T)-F_{\tau^\circ_{x_0/\sigma}}(T)\right)}{1-F_{\tau^\circ_{x_0/\sigma}}(T)}.\]
\end{proof}

We see that the expected total effort increases with the completion rate $F_\mu(T)$ when $x_0, \sigma, T$ are fixed. Since $F_\mu(T)$ decreases with $c$, so does the expected total effort.

\subsection{The finite deadline: numerical comparative statics}

We now perform a numerical study of the equilibrium in games with a finite deadline. The benchmark choice of model inputs are $T=1$, $\sigma=0.25$, $x_0=1$, $c=1$ and $R_\infty=0$. Whenever we vary one parameter, we keep the other parameters fixed.

Figures~\ref{fig:hom1} and \ref{fig:hom2} illustrate some features of the equilibrium with a smooth reward function $H(r)=6(1-r)^2$, and a step reward function $H(r)=5\cdot1_{[0,0.25)}(r)+2\cdot1_{[0.25,0.5)}(r)+1_{[0.5,1]}(r)$, respectively. We can see the following:

\begin{itemize}
\item For a large subset of time-location pairs $(t,x)$, the effort is  low.
 The effort is high initially when there are a lot of players around the same level of progress, and
close to the cutoff dates that distinguish between different rank rewards (the only such cutoff date being the deadline in the case of a smooth reward).
%With a piecewise constant reward function, players who do not give up still apply %low effort most of the time until a bunch of them  comes close to completion.  %Then, they increase the effort, but only as long as they have a chance of being in %the group that gets the higher reward. When they miss that opportunity, they %slack again.
Thus in Figure~\ref{fig:hom2}, there is, for example, a lot of completion in the short interval before $F_\mu$ hits 0.25 because 0.25 is the cutoff point for a higher reward. Once that point is crossed, the players apply low effort until getting close to the next cutoff point, so that the completion rate increases very  slowly in the period between a cutoff point and close to the next cutoff point.

\item Moreover, the effort is very low  (but not zero) when it is hard to complete the task due to distant location or high cost,  or when the reward too small. In those cases (figures not shown) we find that the players maintain a  low effort, relying on randomness of the project to bring them closer to completion, which happens with low probability. 

\item The c.d.f\ of the completion time for early times is close to, but not exactly equal to zero,  because some players finish early by sheer randomness, even if they apply low effort. After those, there is a whole bunch of players who apply the optimal strategy and,  when the reward is piecewise constant,  
finish about the same time. This is where the first sudden increase in the c.d.f\ shows up. %, between the times of completion of those two groups.

\end{itemize}

\begin{figure}[t]
\centering
\includegraphics[height=5.5cm]{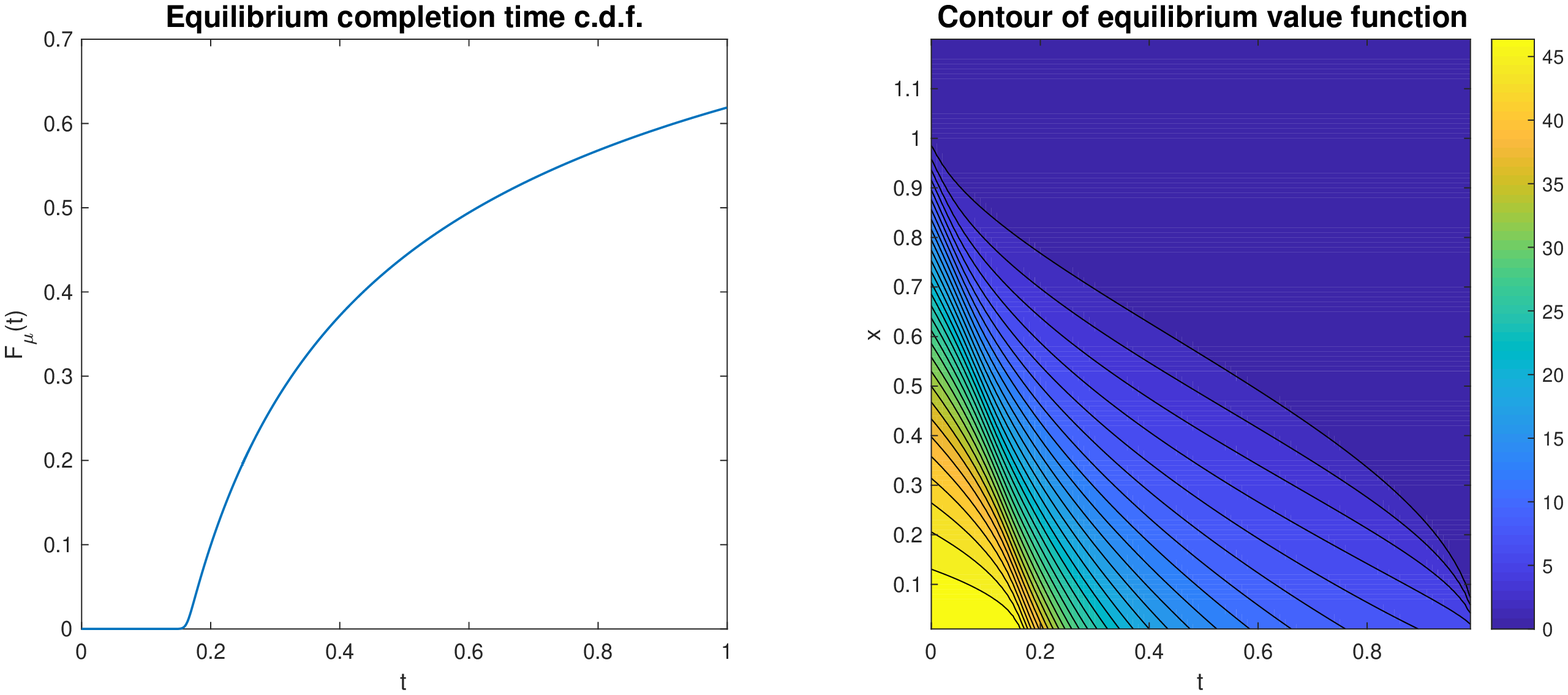}
\includegraphics[height=5.5cm]{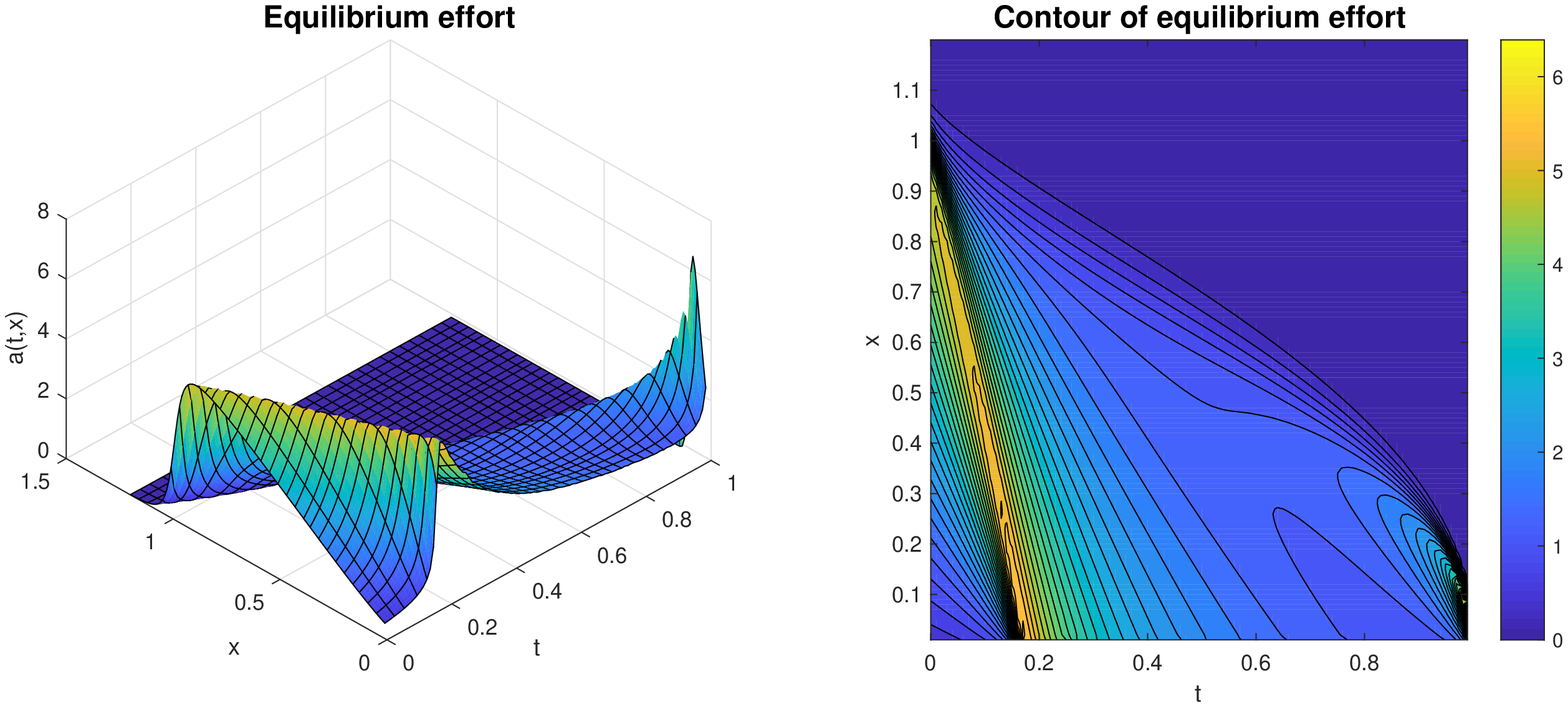}
\caption{Equilibrium completion time distribution, value function and effort function with $H(r)=6(1-r)^2$.}
\label{fig:hom1}
\end{figure}

\begin{figure}[t]
\centering
\includegraphics[height=5.5cm]{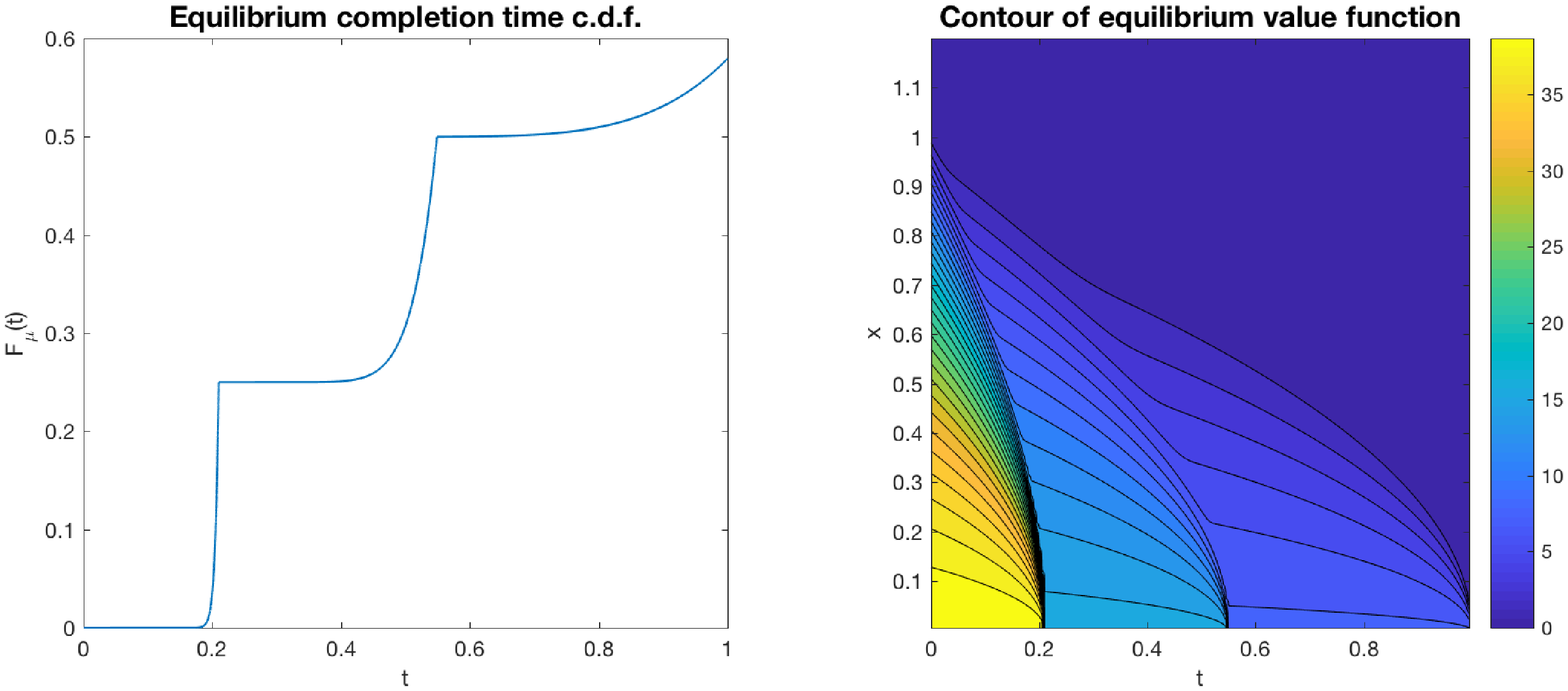}
\includegraphics[height=5.5cm]{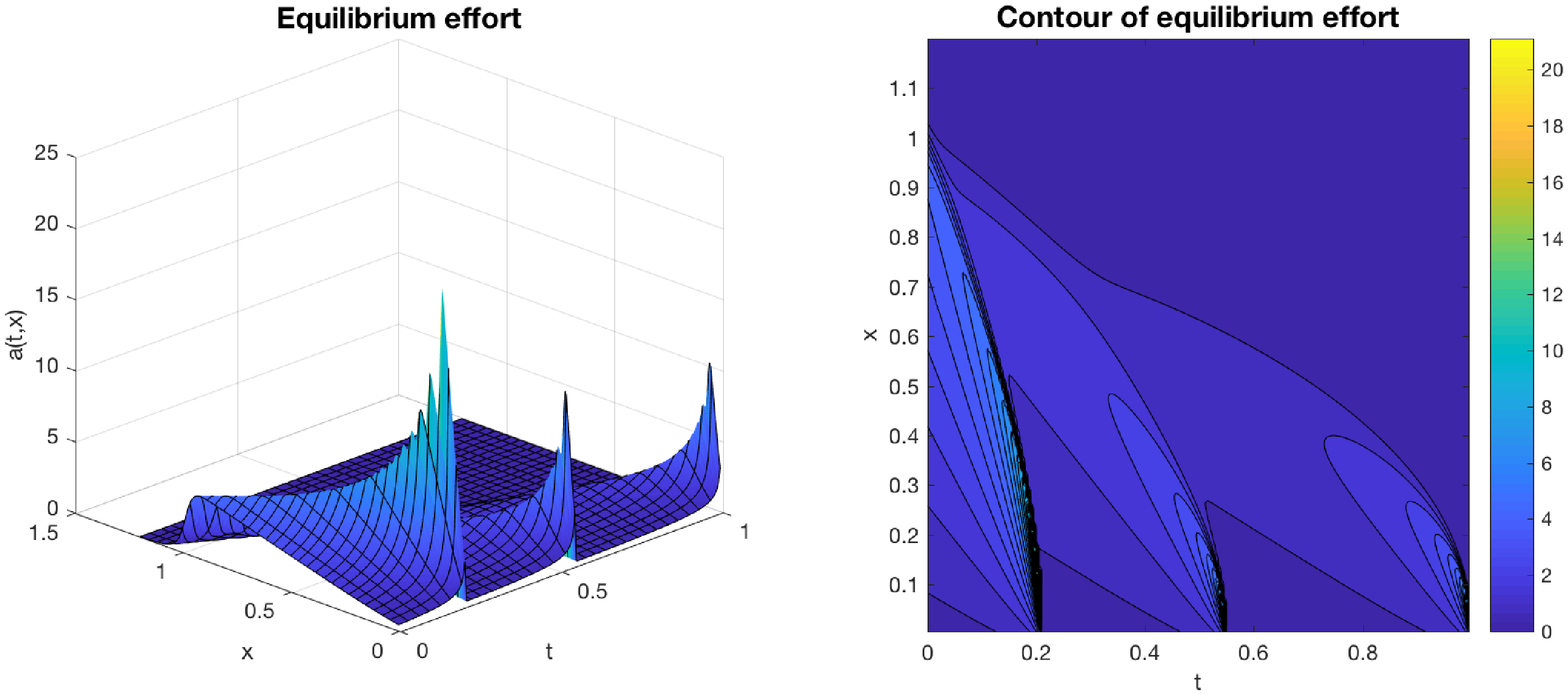}
\caption{Equilibrium completion time distribution, value function and effort function with $H(r)=5\cdot1_{[0,0.25)}(r)+2\cdot1_{[0.25,0.5)}(r)+1_{[0.5,1]}(r)$.}
\label{fig:hom2}
\end{figure}

In the remaining part of this section, we fix $H(r)=6(1-r)^2$ except in Section~\ref{depH} where we analyze the dependence on $H$.

\subsubsection{Dependence on the tournament horizon}

Table~\ref{tab:varyT} shows that as  the deadline increases, both the equilibrium terminal completion rate  and the game value increase (as proved in Proposition~\ref{prop:homcs}).
However,   with a longer deadline
  it takes longer to reach a fixed percentage of completion.
On the other hand, we also see that the quantile functions are not too sensitive to changes in the deadline, except near the discontinuity $r=F_\mu(T)$. This suggests that competition alone is often good enough to drive the progress; the external deadline provides a little extra, but not significant motivation.
\begin{table}[h]
\centering
\scalebox{0.9}{
\begin{tabular}{|c|c|c|c|c|c|}\hline
$T$
&\makebox[5em]{1st quartile}&\makebox[5em]{median}&\makebox[5em]{3rd quartile}
&\makebox[5em]{$F_\mu(T)$}&\makebox[5em]{$V$}\\\hline\hline
%0.25 & - & - & - & 20.7\% & 0.029\\\hline
0.5 & 0.281 & - & - & 44.9\% & 0.074\\\hline
1 & 0.285 & 0.613 & - & 61.9\% & 0.121\\\hline
2 & 0.289 & 0.630 & - & 73.6\% & 0.166\\\hline
5 & 0.293 & 0.649 & 2.424 & 83.4\% & 0.215\\\hline
10 &0.295 & 0.658 & 2.545 &  88.1\% &  0.237 \\\hline
100 &0.296 & 0.666 & 2.661 &  96.0\% &  0.256 \\\hline
$\infty$ & 0.296  & 0.667  & 2.667  & 100\%  & 0.257 \\\hline
\end{tabular}}
\medskip
\caption{Equilibrium quartiles, completion rates and game values under varying deadline.}
\label{tab:varyT}
\end{table}

\subsubsection{Dependence on the reward function}\label{depH}

Next, we focus on the two-parameter family:
\[H(r)=K(1+p)(1-r)^p\]
where $K=\int_0^1 H(r)dr$ represents the total reward budget, and $p$ determines the convexity of the reward function. A large $p$ means that most of the reward is given to highly ranked players. In many examples, including population income in US,  the prize money decreases in a very convex manner, with high ``earners" earning a very large chunk of the pie.

We see from the top left panel of Figure~\ref{fig:homvaryp} that, with a moderate value $K=2$, as $p$ increases, the peak of $f_\mu=F'_{\mu}$ shifts to the left, meaning most of the players finish earlier. On the other hand, this is at the expense of a lower population completion rate, since the laggards, knowing that the reward drops quickly once the leaders have occupied the high ranks, put in less effort and give up more easily.
% i.e.,  the effort region shrinks (figure not shown);
  The lower completion rate also leads to a lower game value; see the bottom left panel of Figure~\ref{fig:homvaryp}.
  Thus,  when the pie is not too small,  the higher the convexity of the reward function, the lower the welfare. That is, shifting the rewards more to the highly ranked players
decreases the welfare.
However, the monotonicity of the welfare  in $p$ is only true when there is sufficient benefit for finishing early. The top right panel of Figure~\ref{fig:homvaryp} shows that when the reward $K$ is small (similar behavior can be observed when  $x_0$ or $c$ is large), both the population terminal completion rate and the game value are no longer decreasing in $p$. A more complete picture is shown in the bottom right panel of Figure~\ref{fig:homvaryp}. We only plot the game value, since it moves in the same direction as the completion rate.

We focus now on the following implications of our results:  since the expected total effort increases with the (terminal) completion rate when we fix $x_0, \sigma, T$ (see Proposition~\ref{prop:expected-effort}), the expected effort will be lower as we increase $p$ in the top left panel of Figure~\ref{fig:homvaryp}; that is, the expected total effort decreases with the level of competition (more unequal rewards). Thus, when the total pie is large enough,  the  competitiveness resulting from inequality in rewards has a demoralizing effect, as in \cite{FNS18}.  However, the top right panel of Figure~\ref{fig:homvaryp} shows that when the total pie $K$ is small, the completion rate and hence the effort level may  go up with more unequal payment. This is because there is   another effect that decreases
competitiveness -- a higher percentage of players gives up, and if the rewards are more uneven it is still worthwhile for the players who do not give up to exert higher effort,  bringing up the aggregate completion rate. That is, the players who do not give up are less discouraged by the prize inequality, because they compete within a smaller group.\footnote{Note that in \cite{FNS18} the size of the pie is directly linked to the aggregate effort, and not fixed exogenously as in the present example.}
If, for example, we consider the competition of building internet search engines, or social media sites, or computer operating systems
as a tournament race, assuming that our infinite players game with a fixed pie is still a decent approximation, since the total pie is large and the rewards are uneven, our results suggest that there is loss of efficiency in that many players get discouraged from applying effort.
They also suggest that  government subsidies for  R\&D (e.g., for renewable energy)  that make the rewards more even among different players are efficiency improving,
 but only in areas in which  the profits to be made are likely to be substantial.

\begin{figure}[t]
\centering
\includegraphics[height=5.5cm]{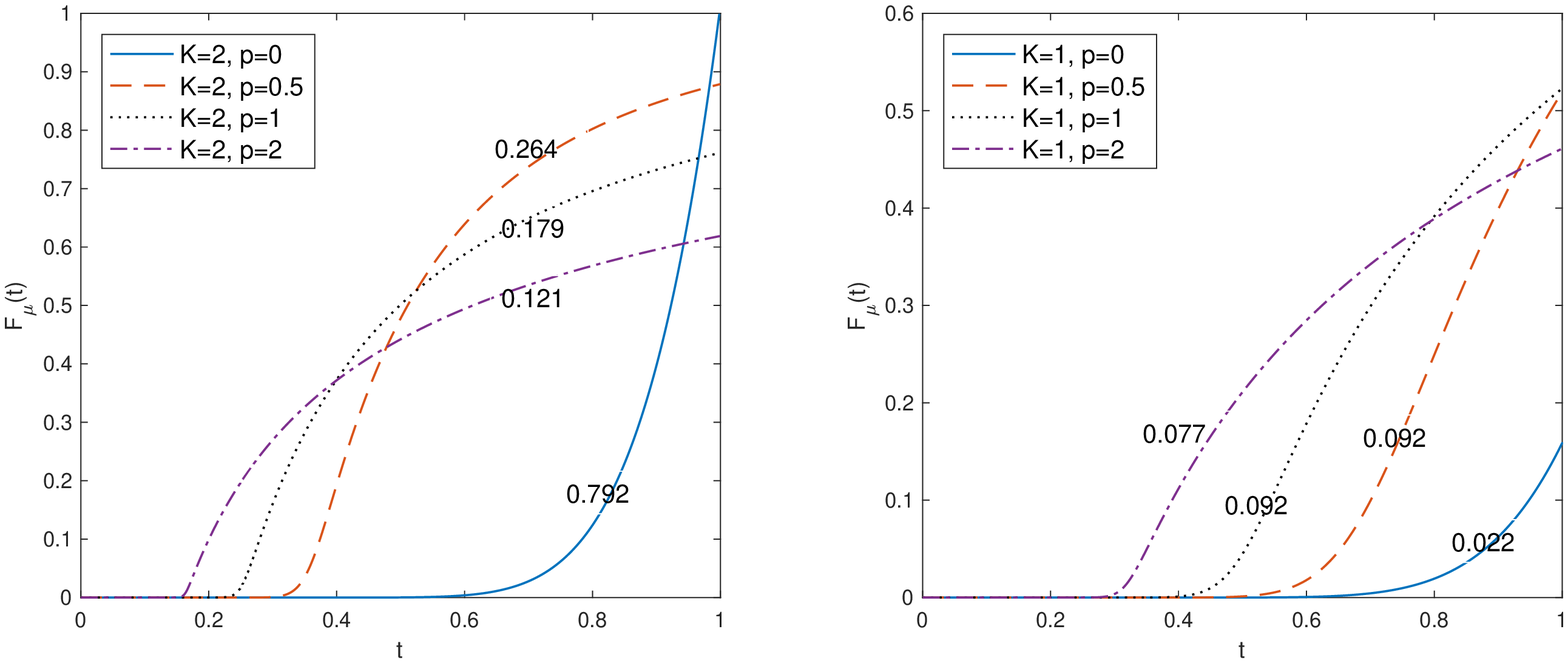}
\includegraphics[height=5.5cm]{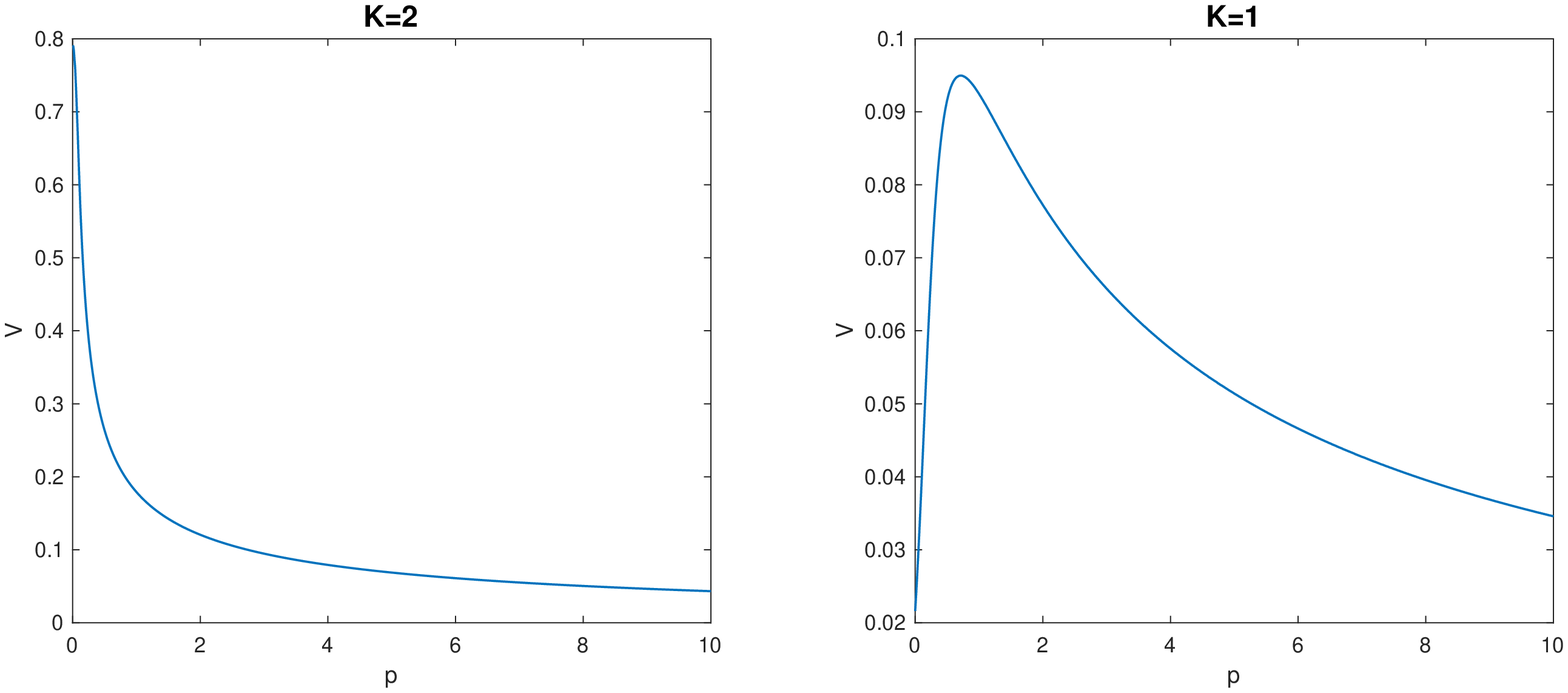}
\caption{Equilibrium completion time distribution and game value under different total reward $K$ and convexity index $p$. The numbers on the curves in the top panels represent their corresponding game values.}
\label{fig:homvaryp}
\end{figure}

%\begin{figure}[t]
%\centering
%\includegraphics[height=6cm]{homK.eps}
%\caption{Equilibrium completion time distribution and effort region under different total reward $K$, while fixing $p=2$. The numbers on the curves in the left panel represent their corresponding game values.}
%\label{fig:homvaryK}
%\end{figure}

\subsubsection{Dependence on the cost parameter}

Figure~\ref{fig:homvaryc} illustrates the dependence on the cost parameter $c$. 
As $c$ increases, the welfare value  first experiences an increase before it starts to decrease}.  The intuition is the same as in the case of $T=\infty$ (in which case, as we proved  in Proposition~\ref{prop:homcs}(iii), the value is always increasing in cost): the aggregate welfare does not depend on the relative ranks of the players, while, with low cost, the players compete
for those ranks ``too hard" against each other, raising the realized cost of effort and  bringing down the welfare value.
 Thus, higher cost $c$ can be beneficial,
by discouraging the players to work too hard. However,
 as $c$ continues to increase, the possibility of failing to finish by  the deadline starts to offset the benefit of a reduced effort, and the value starts to decrease.\footnote{
  We note that with Poisson type uncertainty (sudden breakthroughs), in the infinite horizon model of  \cite{MZ17},  the game value is independent of $c$.} 
  %\footnote{\YZ{For robustness purposes, we have also considered  cases from Remark 3.2, in which the rewards are indirectly discounted, by assuming lower rewards on  later intervals. With a proviso that  numerics become more stable, we still find that $V$ is  increasing in $c$ for small $c$ and decreasing in $c$ for large $c$.}}

\begin{figure}[t]
\centering
\includegraphics[height=5.5cm]{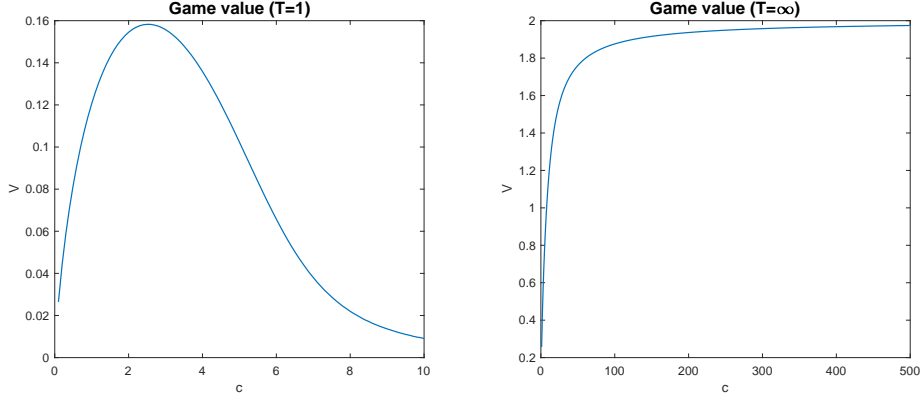}
\caption{Game value against the cost parameter. }
\label{fig:homvaryc}
\end{figure}

\subsection{Reverse engineering: realizing a target distribution}\label{subsec:attainable_mu}

Theorem~\ref{thm:explicit_soln} allows us to obtain, for a given reward function, the corresponding equilibrium distribution. The opposite problem is also important: if we want to achieve a certain equilibrium distribution, how do we go about it? The following two theorems, for $T=\infty$ and $T<\infty$ respectively, do the following:

\begin{itemize}
\item[(i)] identify which  distributions $\mu$ are realized in equilibrium and by which reward function;

\item[(ii)] identify conditions under which a distribution $\mu$ can be realized by an expected budget not higher than a given budget $K$.
\end{itemize}
These results are also helpful in allowing us to convert an optimization over reward functions to an equivalent one over the set of feasible equilibrium distributions. The latter problem is easier in some cases; see Section~\ref{sec:profit} for an example.

Let $\mathcal{H}_0$ be the set of bounded, decreasing reward functions from $[0,1]$ to $[R_\infty,\infty)$, and
\[\mathcal{E}:\mathcal{H}_0\rightarrow \mathcal{P}(\bbR_+), \quad \hbox{for } T=\infty,\]
\[\mathcal{E}_T:\mathcal{H}_0\rightarrow \mathcal{P}(\bbT_0), \quad \hbox{for } T<\infty,\]
be the mappings from $H$ to the equilibrium completion time distribution $\mu$, where $\bbT_0=[0,T]\cup\{\infty\}$. Observe that $\mathcal{E}$ is translation invariant, i.e., \ $\mathcal{E}(H+C)=\mathcal{E}(H)$ for any $C\in\bbR$ such that $H+C\ge R_\infty$.
We first show that $\mathcal{E}$ and $\mathcal{E}_T$ are one-to-one mappings up to a.e.\ equivalence, and in the infinite horizon case, also up to an additive constant.
%We only consider distributions $\mu$ that have a density $f_\mu$  on $[0,T]$.

\begin{lemma}\label{lemma:1to1} Let $H_1, H_2\in\mathcal{H}_0$.
\begin{itemize}
\item[(i)] Suppose $\mathcal{E}(H_1)=\mathcal{E}(H_2)=\mu$, then $H_1=H_2+C$ a.e.\ for some constant $C$.
\item[(ii)] Suppose $\mathcal{E}_T(H_1)=\mathcal{E}_T(H_2)=\mu$, then $H_1=H_2$ a.e.\ on $[0, F_\mu(T)]$.
\end{itemize}
\end{lemma}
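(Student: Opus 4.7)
The plan is to invert Theorem~\ref{thm:explicit_soln} by differentiating the explicit quantile formulas. In both parts, since $\mu$ is the same, its quantile function $T^\mu_r$ is the same for $H_1$ and $H_2$. Composing with $F_{\tau^\circ_{x_0/\sigma}}$ (which is strictly increasing on its support), I obtain a pointwise identity between the normalized integrated transforms of the two reward functions. Differentiating almost everywhere in $r$ will then reduce the equality to a multiplicative relation of $\exp(-H_i/(2c\sigma^2))$, from which the claims follow after taking logarithms.

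For part (i), applying $F_{\tau^\circ_{x_0/\sigma}}$ to \eqref{eq:infNEquantile} with $H=H_i$ gives, for $r\in[0,1]$,
\begin{equation*}
\frac{\int_0^r \exp(-H_1(z)/(2c\sigma^2))\,dz}{\int_0^1 \exp(-H_1(z)/(2c\sigma^2))\,dz} = \frac{\int_0^r \exp(-H_2(z)/(2c\sigma^2))\,dz}{\int_0^1 \exp(-H_2(z)/(2c\sigma^2))\,dz}.
\end{equation*}
Differentiating in $r$ yields $\exp(-H_1(r)/(2c\sigma^2)) = \lambda\,\exp(-H_2(r)/(2c\sigma^2))$ a.e., where $\lambda$ is the ratio of the denominators; taking logarithms gives $H_1 = H_2 + C$ a.e.\ with $C = 2c\sigma^2\ln\lambda$.

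For part (ii), I first note that $F_\mu(T)$ is determined by $\mu$ alone, so it is the same for both reward functions. Applying $F_{\tau^\circ_{x_0/\sigma}}$ to \eqref{eq:NEquantile}, evaluated at $r=F_\mu(T)$, together with \eqref{eq:NErate}, gives
\begin{equation*}
\int_0^{F_\mu(T)} \exp(-H_i(z)/(2c\sigma^2))\,dz = \frac{F_{\tau^\circ_{x_0/\sigma}}(T)\,(1-F_\mu(T))}{(1-F_{\tau^\circ_{x_0/\sigma}}(T))\,\exp(R_\infty/(2c\sigma^2))}, \quad i=1,2,
\end{equation*}
so the two integrals agree. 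Applying $F_{\tau^\circ_{x_0/\sigma}}$ to \eqref{eq:NEquantile} at arbitrary $r\in[0,F_\mu(T)]$ and using this equality of denominators, I obtain $\int_0^r \exp(-H_1(z)/(2c\sigma^2))\,dz = \int_0^r \exp(-H_2(z)/(2c\sigma^2))\,dz$ throughout $[0,F_\mu(T)]$; differentiating in $r$ gives $H_1=H_2$ a.e.\ on $[0,F_\mu(T)]$.

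There is no serious obstacle; the only subtlety is to handle the one-sided normalization properly. In the infinite horizon case, equation \eqref{eq:infNEquantile} is translation-invariant in $H$, which is why one must allow the additive constant $C$; in the finite horizon case the explicit appearance of $R_\infty$ (the floor for players exceeding the deadline) anchors the reward, removing the constant and yielding pointwise uniqueness on $[0,F_\mu(T)]$. Outside $[0,F_\mu(T)]$ no identification is possible because those ranks are never realized in equilibrium, which is why the conclusion of (ii) is restricted to that interval.
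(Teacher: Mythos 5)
Your proposal is correct and follows essentially the same route as the paper: both invert the explicit quantile identity of Theorem~\ref{thm:explicit_soln} by differentiation to recover $\exp(-H_i/(2c\sigma^2))$ up to a multiplicative constant, hence $H_i$ up to an additive one. The only cosmetic differences are that you differentiate in the rank variable $r$ while the paper differentiates in $t$ and then changes variables via $r=F_\mu(t)$, and that your detour through \eqref{eq:NErate} in part (ii) is harmless but unnecessary, since the prefactor in \eqref{eq:NEquantile} already depends only on $\mu$.
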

 \begin{proof}
We only prove (ii); case (i) is similar. By \eqref{eq:NEquantile},
\[F_{\tau^\circ_{x_0/\sigma}}(t)=\frac{1-F_{\tau^\circ_{x_0/\sigma}}(T)}{1-F_\mu(T)}\int_0^{F_\mu(t)} \exp\left(\frac{R_\infty-H_i(z)}{2c\sigma^2}\right)dz, \quad t\in[0,T], \ i=1, 2.\]
Differentiating both sides and rearranging terms yields
\[H_i(F_\mu(t))=2c\sigma^2 \ln\left(\frac{f_\mu(t)}{f_{\tau^\circ_{x_0/\sigma}}(t)}\right)+2c\sigma^2 \ln\left(\frac{1-F_{\tau^\circ_{x_0/\sigma}}(T)}{1-F_\mu(T)}\right)+R_\infty\quad a.e.\]
It follows that
\[\int_0^{F_\mu(T)} |H_1(r)-H_2(r)|dr=\int_0^T \left|H_1(F_\mu(t))-H_2(F_\mu(t))\right|f_\mu(t) dt=0,\]
and consequently, $H_1=H_2$ a.e.\ on $[0, F_\mu(T)]$.
\end{proof}
%\begin{remark}
%The assumption that $f_\mu/f_{\tau^\circ_{x_0/\sigma}}\le C$ is to ensure that we get a bounded reward function. It is possible to relax the assumption to e.g.\ $f_\mu(t)/f_{\tau^\circ_{x_0/\sigma}}(t)\le C(1+t^p)$ for some $p<1/2$. In this case, the corresponding reward function may be unbounded.
%\end{remark}

Denote by  $\mathcal{P}^+(\bbR_+)$ (reps.\ $\mathcal{P}^+(\bbT_0)$) the set of probability distributions on $\bbR_+$ (resp.\ $\bbT_0$) that have strictly positive density on $\bbR_+$ (resp.\ $[0,T]$). 
The key quantity is the normalized density
 $$\zeta_\mu:=f_\mu/f_{\tau^\circ_{x_0/\sigma}},$$
 and if $T<\infty$, also the normalized incompletion rate
 \[\delta_\mu:=\frac{1-F_\mu(T)}{1-F_{\tau^\circ_{x_0/\sigma}}(T)}.\]

\begin{thm}\label{thm:Erange}
Fix $K\ge R_\infty$. For $\mu\in \mathcal{P}^+(\bbR_+)$, define
\[H_\mu(r):=
%2c\sigma^2 \ln\left(\frac{f_\mu(F_\mu^{-1}(r))}{f_{\tau^\circ_{x_0/\sigma}}(F_\mu^{-1}(r))}\right)=
2c\sigma^2 \ln \zeta_\mu(F^{-1}_\mu(r)).\]
We have
\begin{itemize}
\item[(i)] $\mathcal{E}(\mathcal{H}_0)=\{\mu\in \mathcal{P}^+(\bbR_+): \ \ln \zeta_\mu \text{ is bounded and decreasing}\}$ and
\[\mathcal{E}^{-1}(\mu)=\{H_\mu+C: C\ge R_\infty-2c\sigma^2\ln \left(\inf \zeta_\mu \right)\},\footnote{Here and in the sequel, we identify a reward function $H$ with its equivalence class under a.e.\ relation.}\quad \mu\in \mathcal{E}(\mathcal{H}_0).\]
%\item[(ii)] For $\mu\in \mathcal{E}(\mathcal{H}_0)$, $H_\mu+C\ge R_\infty$ if and only if
%$C\ge R_\infty-2c\sigma^2\ln \left(\inf \zeta_\mu \right).$
\item[(ii)] For $\mu\in \mathcal{E}(\mathcal{H}_0)$,
%\begin{align*}
%&\mathcal{E}^{-1}(\mu)\cap\left\{H: \int_0^1 H(r)dr\le K\right\}\\
%&=\left\{H_\mu+C: R_\infty-2c\sigma^2\ln \left(\inf \zeta_\mu \right)\le C\le K-2c\sigma^2 \int_0^\infty \ln\zeta_\mu(t)d\mu(t)\right\}.
%\end{align*}
$\int_0^1 (H_\mu(r)+C) dr\le K$ if and only if $C\le K-2c\sigma^2 \int_0^\infty \ln\zeta_\mu(t)d\mu(t)$.
\end{itemize}
\end{thm}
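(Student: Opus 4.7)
The plan is to invert the explicit quantile formula \eqref{eq:infNEquantile} from Theorem~\ref{thm:explicit_soln}(ii), which already exhibits the one-to-one correspondence between rewards and equilibria up to an additive constant. I would handle the backward direction of (i) first, then the forward direction, then deduce (ii) by a change of variables.

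For the backward direction of (i), suppose $\mu\in\mathcal{P}^+(\bbR_+)$ with $\ln\zeta_\mu$ bounded and decreasing. Then $H_\mu(r)=2c\sigma^2\ln\zeta_\mu(F_\mu^{-1}(r))$ is bounded and decreasing, so $H_\mu+C\in\mathcal{H}_0$ for every $C\ge R_\infty-2c\sigma^2\ln(\inf\zeta_\mu)$. By the translation invariance $\mathcal{E}(H+C)=\mathcal{E}(H)$ noted before Lemma~\ref{lemma:1to1}, it suffices to verify $\mathcal{E}(H_\mu)=\mu$. Substituting $z=F_\mu(s)$ into the integrals appearing in \eqref{eq:infNEquantile} applied to $H_\mu$ gives
\[\exp\!\Big(-\tfrac{H_\mu(z)}{2c\sigma^2}\Big)\,dz=\zeta_\mu(s)^{-1}f_\mu(s)\,ds=f_{\tau^\circ_{x_0/\sigma}}(s)\,ds,\]
so the numerator collapses to $F_{\tau^\circ_{x_0/\sigma}}(F_\mu^{-1}(r))$ and the denominator to $1$. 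Hence $T^{\mathcal{E}(H_\mu)}_r=F_\mu^{-1}(r)=T^\mu_r$, i.e.\ $\mathcal{E}(H_\mu)=\mu$.

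For the forward direction, suppose $\mathcal{E}(H)=\mu$ for some $H\in\mathcal{H}_0$. Differentiating \eqref{eq:infNEquantile} in $r$ at $r=F_\mu(t)$ and using $(T^\mu_\cdot)'(F_\mu(t))=1/f_\mu(t)$ yields
\[\zeta_\mu(t)=\frac{f_\mu(t)}{f_{\tau^\circ_{x_0/\sigma}}(t)}=\bigg(\int_0^1 e^{-H(z)/(2c\sigma^2)}\,dz\bigg)\exp\!\Big(\tfrac{H(F_\mu(t))}{2c\sigma^2}\Big),\]
which rearranges into $H(r)=H_\mu(r)-2c\sigma^2\ln\!\big(\int_0^1 e^{-H(z)/(2c\sigma^2)}\,dz\big)$ for $r\in[0,1)$. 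Thus $H$ differs from $H_\mu$ by a constant, so $H_\mu$ inherits boundedness and monotonicity from $H$ (equivalently, $\ln\zeta_\mu$ is bounded and decreasing), and the requirement $H_\mu+C\ge R_\infty$ forces $C\ge R_\infty-2c\sigma^2\ln(\inf\zeta_\mu)$. Combined with the previous step, this establishes both the characterization of $\mathcal{E}(\mathcal{H}_0)$ and the description of $\mathcal{E}^{-1}(\mu)$.

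Part (ii) follows from the same change of variables $r=F_\mu(t)$: since $H_\mu(F_\mu(t))=2c\sigma^2\ln\zeta_\mu(t)$, we have
\[\int_0^1 H_\mu(r)\,dr=2c\sigma^2\int_0^\infty \ln\zeta_\mu(t)\,d\mu(t),\]
so $\int_0^1(H_\mu+C)\,dr\le K$ if and only if $C\le K-2c\sigma^2\int_0^\infty \ln\zeta_\mu\,d\mu$. The main subtle point is the interplay between the translation invariance of $\mathcal{E}$ and the constraint $H\ge R_\infty$: the canonical representative $H_\mu$ need not itself lie in $\mathcal{H}_0$, but shifting by an admissible constant preserves the equilibrium, which is precisely why the description of $\mathcal{E}^{-1}(\mu)$ is a half-line of translates. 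Once this normalization issue is addressed and $\mu$-integrability of $\ln\zeta_\mu$ (which is immediate from boundedness) is noted, the argument reduces to the bookkeeping above.
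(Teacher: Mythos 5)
Your proposal is correct and follows essentially the same route as the paper: verify that $\mu$ satisfies \eqref{eq:infNEquantile} with $H=H_\mu$ via the substitution $z=F_\mu(s)$, invoke translation invariance for the half-line of admissible constants, and obtain part (ii) by the change of variables $r=F_\mu(t)$. The only cosmetic difference is that in the forward direction you re-derive the ``differs by a constant'' fact by differentiating the quantile formula, which is exactly the computation the paper packages as Lemma~\ref{lemma:1to1}(i) (together with Proposition~\ref{prop:taupdf} for the monotonicity and boundedness of $\ln\zeta_\mu$), so there is no substantive divergence.
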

\begin{proof}
(i) By Proposition~\ref{prop:taupdf}, $\mathcal{E}(\mathcal{H}_0)\subseteq \{\mu\in \mathcal{P}^+(\bbR_+): \ln \zeta_\mu \text{ is bounded and decreasing}\}$. Conversely, given any $\mu\in \mathcal{P}^+(\bbR_+)$ such that $\ln \zeta_\mu$ is bounded and decreasing, $H_\mu$ is also bounded and decreasing. Since $F^{-1}_\mu:[0,1]\mapsto \bbR_+$ is bijective, $H_\mu+C\ge R_\infty$ if and only $C\ge R_\infty-2c\sigma^2 \ln(\inf \zeta_\mu)$. It is straightforward to check that $\mu$ satisfies \eqref{eq:infNEquantile} with $H=H_\mu$:
\begin{align*}
\frac{\int_0^{r}  \exp\left(-\frac{H_\mu(z)}{2c\sigma^2}\right) dz}{\int_0^1\exp\left(-\frac{H_\mu(z)}{2c\sigma^2}\right) dz}&=\frac{\int_0^{r}  \frac{f_{\tau^\circ_{x_0/\sigma}}(F_\mu^{-1}(z))}{f_\mu(F_\mu^{-1}(z))} dz}{\int_0^1 \frac{f_{\tau^\circ_{x_0/\sigma}}(F_\mu^{-1}(z))}{f_\mu(F_\mu^{-1}(z))}dz}=\frac{\int_0^{F^{-1}_\mu(r)} \frac{f_{\tau^\circ_{x_0/\sigma}}(t)}{f_\mu(t)} dF_\mu(t)}{\int_0^\infty \frac{f_{\tau^\circ_{x_0/\sigma}}(t)}{f_\mu(t)} dF_\mu(t)}=F_{\tau^\circ_{x_0/\sigma}}\left(F_\mu^{-1}(r)\right).
\end{align*}
Hence, $\mathcal{E}(H_\mu+C)=\mathcal{E}(H_\mu)=\mu$ by the translation invariance of $\mathcal{E}$. By Lemma~\ref{lemma:1to1}(i), any admissible reward scheme realizing $\mu$ differs from $H_\mu$ by a constant. (ii) follows from straightforward calculation.
\end{proof}

\begin{thm}\label{thm:ETrange}
Fix $T<\infty$ and $K\ge R_\infty$ . For $\mu\in \mathcal{P}^+(\bbT_0)$, define
\[\widetilde H_\mu(r):=R_\infty+2c\sigma^2 1_{[0,F_\mu(T)]}(r)\left\{\ln \zeta_\mu(F^{-1}_\mu(r))-\ln\delta_\mu\right\}.\]
We have
\begin{itemize}
\item[(i)]
$\mathcal{E}_T(\mathcal{H}_0)=\left\{\mu\in \mathcal{P}^+(\bbT_0):  \ \ln \zeta_\mu \text{ is bounded and decreasing}, \displaystyle{\inf_{t\in[0,T]}}\zeta_\mu(t)\ge\delta_\mu\right\}$,
and
\[\mathcal{E}_T^{-1}(\mu)=\{H\in\mathcal{H}_0: H|_{[0,F_\mu(T)]}=\widetilde H_\mu|_{[0,F_\mu(T)]}\},\quad \mu\in \mathcal{E}_T(\mathcal{H}_0).\]
%\item[(ii)] For $\mu\in\mathcal{E}_T(\mathcal{H}_0)$, $\tilde H_\mu\ge R_\infty$ if and only if
%\[\inf_{t\in[0,T]}\zeta_\mu(t)\ge \frac{1-F_\mu(T)}{1-F_{\tau^\circ_{x_0/\sigma}}(T)}.\]
\item[(ii)]
For $\mu\in\mathcal{E}_T(\mathcal{H}_0)$, $\int_0^1 \widetilde H_\mu(r)dr=\int_0^{F_\mu(T)} \widetilde H_\mu(r)dr+(1-F_\mu(T))R_\infty\le K$ if and only if
\[\int_0^T\ln \zeta_\mu(t)d\mu(t)\le \frac{K-R_\infty}{2c\sigma^2}+F_\mu(T) \ln\delta_\mu.\]
\end{itemize}
\end{thm}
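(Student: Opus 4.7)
The plan is to mirror the proof of Theorem~\ref{thm:Erange}, tracking the new piece of data -- the normalized incompletion rate $\delta_\mu$ -- that distinguishes the finite horizon case. For necessity in part (i), I would start from the fixed point relation $f_\mu(t) = (u(t,0;\mu,c)/u(0,x_0;\mu,c))\, f_{\tau^\circ_{x_0/\sigma}}(t)$ given by Proposition~\ref{prop:taupdf}, and combine the boundary value $u(t,0;\mu,c) = \exp(H(F_\mu(t))/(2c\sigma^2))$ with the explicit expression $u(0,x_0;\mu,c) = \delta_\mu^{-1}\exp(R_\infty/(2c\sigma^2))$ read off from Theorem~\ref{thm:explicit_soln}(i). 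This yields the key identity
\[
\zeta_\mu(t) = \delta_\mu \exp\left(\frac{H(F_\mu(t)) - R_\infty}{2c\sigma^2}\right), \quad t\in[0,T],
\]
from which the three required properties are immediate: $\ln\zeta_\mu$ is bounded and decreasing (from those of $H$ together with the strict monotonicity of $F_\mu$ on $[0,T]$, itself a consequence of $\mu\in\mathcal{P}^+(\bbT_0)$), and $\inf_{t\in[0,T]}\zeta_\mu(t)\ge\delta_\mu$ follows from $H\ge R_\infty$. Solving the displayed identity for $H$ and substituting $r = F_\mu(t)$ produces exactly $H(r) = \widetilde H_\mu(r)$ on $[0, F_\mu(T)]$.

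For sufficiency, given $\mu$ in the set on the right-hand side of (i), I would verify that $\widetilde H_\mu \in \mathcal{H}_0$: boundedness and decreasing monotonicity on $[0, F_\mu(T)]$ come from the same properties of $\ln\zeta_\mu \circ F_\mu^{-1}$; the inequality $\widetilde H_\mu\ge R_\infty$ on $[0, F_\mu(T)]$ comes precisely from $\zeta_\mu\ge\delta_\mu$; on $(F_\mu(T),1]$ the function equals $R_\infty$ by construction. I would then plug $\widetilde H_\mu$ into \eqref{eq:NEquantile} and perform the change of variable $z = F_\mu(s)$ to directly confirm that $\mu$ realizes this reward, with consistency at the endpoint $r=F_\mu(T)$ producing \eqref{eq:NErate}. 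The description of $\mathcal{E}_T^{-1}(\mu)$ then follows from Lemma~\ref{lemma:1to1}(ii): any admissible $H$ realizing $\mu$ must agree a.e.\ with $\widetilde H_\mu$ on $[0, F_\mu(T)]$, while its values on $(F_\mu(T),1]$ do not enter the equilibrium conditions.

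Part (ii) should reduce to a direct change of variables. Split $\int_0^1 \widetilde H_\mu(r)\,dr$ into the pieces over $[0,F_\mu(T)]$ and $(F_\mu(T),1]$. The tail contributes $(1-F_\mu(T))R_\infty$, and the head transforms, via $r = F_\mu(t)$, into
\[
F_\mu(T) R_\infty + 2c\sigma^2\left(\int_0^T \ln\zeta_\mu(t)\,d\mu(t) - F_\mu(T)\ln\delta_\mu\right).
\]
Summing and rearranging the inequality $\int_0^1 \widetilde H_\mu(r)\,dr\le K$ gives the stated bound.

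The main obstacle is less a matter of technical novelty than of careful bookkeeping between the two pieces of data $(\zeta_\mu,\delta_\mu)$ that encode a finite-horizon equilibrium. In particular, the new constraint $\inf\zeta_\mu\ge\delta_\mu$ is not an artifact of the computation but is exactly what is needed to guarantee $\widetilde H_\mu\ge R_\infty$ on $[0,F_\mu(T)]$, so verifying this equivalence carefully is the point where one must be most attentive; the rest of the argument proceeds in close parallel to Theorem~\ref{thm:Erange}.
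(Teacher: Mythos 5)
Your proposal is correct and follows essentially the same route as the paper's proof: the same key identity $\zeta_\mu(t)=\delta_\mu\exp\bigl((H(F_\mu(t))-R_\infty)/(2c\sigma^2)\bigr)$ for necessity, the same verification of \eqref{eq:NEquantile} via the change of variable $z=F_\mu(s)$ for sufficiency, and the same appeal to Lemma~\ref{lemma:1to1}(ii) together with the irrelevance of rewards beyond rank $F_\mu(T)$ to identify $\mathcal{E}_T^{-1}(\mu)$. Part (ii) is the same direct computation the paper leaves as "straightforward."
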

\begin{proof}
(i) Let $\mu\in \mathcal{E}_T(\mathcal{H}_0)$. By Proposition~\ref{prop:taupdf}, we have that $\mu\in \mathcal{P}^+(\bbT_0)$ and $\ln \zeta_\mu$ is bounded and decreasing. Moreover, since $H\ge R_\infty$, we have that for $t\in[0,T]$,
\[\zeta_\mu(t)=\frac{f_\mu(t)}{f_{\tau^\circ_{x_0/\sigma}}(t)}=\frac{u(t,0;\mu,c)}{u(0,x_0;\mu,c)}
%=\frac{\exp\left(\frac{H(F_\mu(t))}{2c\sigma^2}\right)}{\frac{1-F_{\tau^\circ_{x_0/\sigma}(T)}}{1-F_\mu(T)}\exp\left(\frac{R_\infty}{2c\sigma^2}\right)}
=\frac{1-F_\mu(T)}{1-F_{\tau^\circ_{x_0/\sigma}}(T)}\exp\left(\frac{H(F_\mu(t))-R_\infty}{2c\sigma^2}\right)\ge \delta_\mu.\]
Conversely, given any $\mu\in \mathcal{P}^+(\bbT)$ such that $\ln \zeta_\mu$ is bounded and decreasing, and
$\inf_{t\in[0,T]}\zeta_\mu(t)\ge \delta_\mu$, we have $\widetilde H_\mu\in\mathcal{H}_0$. It is straightforward to check that $\mu$ satisfies \eqref{eq:NEquantile} with $H=\widetilde H_\mu$: for $r\in[0,F_\mu(T)]$,
\begin{align*}
&\frac{1-F_{\tau^\circ_{x_0/\sigma}}(T)}{1-F_\mu(T)}\int_0^r \exp\left(\frac{R_\infty-\widetilde H_\mu(z)}{2c\sigma^2}\right)dz=\int_0^r \frac{f_{\tau^\circ_{x_0/\sigma}}(F^{-1}_\mu(z))}{f_\mu(F^{-1}_\mu(z))}dz\\
&=\int_0^{F^{-1}_\mu(r)}\frac{f_{\tau^\circ_{x_0/\sigma}}(t)}{f_\mu(t)}dF_\mu(t)=F_{\tau^\circ_{x_0/\sigma}}(F^{-1}_\mu(r)).
\end{align*}
Hence, $\mathcal{E}_T(\widetilde H_\mu)=\mu$. By Lemma~\ref{lemma:1to1}(ii), any admissible reward scheme realizing $\mu$ must agree with $\widetilde H_\mu$ on $[0, F_\mu(T)]$. Conversely, since the reward after rank $F_\mu(T)$ is irrelevant in determining the individual's best response, we have $\mathcal{E}_T(H)=\mathcal{E}_T(\widetilde H_\mu)=\mu$ for any $H\in\mathcal{H}_0$ which agrees with $\widetilde H_\mu$ on $[0, F_\mu(T)]$. (ii) follows from straightforward calculation.
\end{proof}

\subsection{Optimal reward design}

The semi-explicit characterization of the equilibrium allows us to further study the optimal reward design problem for a principal or social planner. 
We continue to consider only the reward functions of the form  \eqref{eq:PRBR} with $H\in\mathcal{H}_0$, in which case
there exists a unique equilibrium with completion time distribution denoted by $\mathcal{E}(H)$ or $\mathcal{E}_T(H)$. 

We consider three different optimization criteria: minimizing time to achieve a given population completion rate (Section~\ref{sec:quantile}), maximizing welfare (Section~\ref{sec:welfare}) and maximizing net profit (Section~\ref{sec:profit}).
To preview the results: for the first two criteria, the optimal reward is a two-step function -- the same reward for all sufficiently high ranks, and the same minimum guaranteed payment $R_\infty$ for the low ranks. This is different from the one-stage Poisson game of \cite{MZ17} where the quantile-minimizing reward scheme is concave for high ranks.
%We note that this  is not very surprising, given that the players are identical independent copies of each other. 
For the third problem where for a given profit function $g$, we maximize the expected profit $E g(\tau)$ minus the cost of reward, with $\tau$ drawn from the infinite horizon equilibrium distribution, the optimal reward for finishing at time $t$ is a linear transformation of $g(t)$ for $t$ lower than a  bonus deadline $t^*_b$, and $R_\infty$ otherwise.

{Here we only highlight the proof of the third problem where we rely on the result from reverse engineering. All other proofs are provided in the appendix.}

\subsubsection{Minimizing the time to achieve a given completion rate}\label{sec:quantile}

We fix a deadline $T\in(0,\infty]$, a total reward budget $K$, a target completion rate $\alpha\in(0,1)$ and a minimum participation reward $R_\infty\le K$, and  look for  reward function $H(r)\ge R_\infty$ that minimizes the time it takes $\alpha$ fraction of the population to complete their projects in equilibrium.
More precisely, the  feasible set of reward functions is
\[\mathcal{H}:=\left\{H\in\mathcal{H}_0: \int_0^1 H(r) dr\le K\right\}, \text{ for }  T=\infty,\]
and, with $\beta(H):=F_{\mathcal{E}_T(H)}(T)$,
\[\mathcal{H}^\alpha_T:=\left\{H\in\mathcal{H}_0: \beta(H)\ge \alpha, \int_0^{\beta(H)} H(r) dr+(1-\beta(H))R_\infty\le K\right\}, \text{ for } T<\infty.\]
%Recall that $\mathcal{E}$ (resp.\ $\mathcal{E}_T$) is the mapping from $H$ to the equilibrium completion time distribution $\mu$ in the infinite (resp.\ finite) horizon setting. 

For $H\in\mathcal{H}$, let $T_\alpha(H):=T^{\mathcal{E}(H)}_\alpha$ be the $\alpha$-quantile of $\mathcal{E}(H)$. We wish to find
$T^*_\alpha=\inf_{H\in\mathcal{H}} T_\alpha (H),$
and identify the minimizer $H^*$, if it exists.
 Similarly, for $H\in\mathcal{H}_T$, let $T_\alpha(H;T):=T^{\mathcal{E}_T(H)}_\alpha$ be the $\alpha$-quantile of $\mathcal{E}_T(H)$.
We will look for the optimizer of $T^*_\alpha(T)=\inf_{H\in\mathcal{H}^\alpha_T}T_\alpha (H;T).$
%$T^*_\alpha(T)=\underset{H\in\mathcal{H}^\alpha_T}{\inf} T_\alpha (H;T).$

\begin{remark}
For the finite horizon problem, there are two different ways to impose the budget constraint: (i) to require $\int_0^1 H(r)dr\le K$, as in the definition of $\mathcal{H}$; in this case, the budget may not be fully utilized due to a portion of the players failing to complete by time $T$,
%The unused amount equals $\int_{\beta(H)}^1 (H(r)-R_\infty) dr$. 
but the advantage of such a constraint is that the total reward does not go over $K$ even out of equilibrium; (ii) to bound  the total  reward only in equilibrium: $\int_0^{\beta(H)} H(r) dr+(1-\beta(H))R_\infty$, as in the definition of $\mathcal{H}^\alpha_T$; such a constraint is weaker, but it might be violated if the population does not end up in  equilibrium.
It turns out, as we will see in Theorem~\ref{thm:optRT},  that the two constraints result in the same optimal value and optimal reward function.
\end{remark}

The following two theorems present the optimal reward functions and the corresponding equilibria.
\begin{thm}\label{thm:optR}
Let $T=\infty.$ Then,
$\inf_{H\in\mathcal{H}}T_\alpha(H)$ is uniquely attained (up to a.e.\ equivalence) by the uniform scheme with cutoff rank $\alpha$:
\[H^*(r)=R_\infty+\frac{K-R_\infty}{\alpha}1_{[0,\alpha]}(r).\]
The minimal time is
\[T^*_\alpha=T_\alpha(H^*)=F^{-1}_{\tau^\circ_{x_0/\sigma}}\left(\frac{\alpha}{\alpha+(1-\alpha)\exp\left(\frac{K-R_\infty}{2\alpha c\sigma^2}\right)}\right).\]
The c.d.f.\ of $\mu=\mathcal{E}(H^*)$ is given by
\[F_\mu(t)=
\begin{cases}
\left(\alpha+(1-\alpha)\exp\left(\frac{K-R_\infty}{2\alpha c\sigma^2}\right)\right)F_{\tau^\circ_{x_0/\sigma}}(t), & \text{if } t\le T^*_\alpha,\\
F_{\tau^\circ_{x_0/\sigma}}(t)+\alpha\left(1-F_{\tau^\circ_{x_0/\sigma}}(t)\right)\left(1-\exp\left(\frac{R_\infty-K}{2\alpha c\sigma^2}\right)\right), & \text{if } t> T^*_\alpha.
\end{cases}
\]
The equilibrium value attained by each player is
\[V=R_\infty-2c\sigma^2 \ln\left(\alpha \exp\left(\frac{R_\infty-K}{2\alpha c\sigma^2}\right)+1-\alpha\right),\]
and the equilibrium effort, in feedback form, is
\[a(t,x)=\frac{2\left[\exp\left(\frac{K-R_\infty}{2\alpha c\sigma^2}\right)-1\right]N' \left(\frac{x}{\sigma\sqrt{T^*_\alpha-t}}\right) \frac{\sigma}{\sqrt{T^*_\alpha-t}}}{1+2\left[\exp\left(\frac{K-R_\infty}{2\alpha c\sigma^2}\right)-1\right]\left[1-N\left(\frac{x}{\sigma\sqrt{T^*_\alpha-t}}\right)\right]},\]
where $N$ and $N'$ are the c.d.f.\ and p.d.f.\ of the standard normal distribution.
\end{thm}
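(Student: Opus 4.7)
My plan is to exploit the explicit quantile formula from Theorem~\ref{thm:explicit_soln}(ii) and reduce the functional optimization over $H\in\mathcal{H}$ to a two-parameter problem via a rearrangement plus Jensen argument. Writing $g:=\exp(-H/(2c\sigma^2))$, formula \eqref{eq:infNEquantile} gives
\[
T_\alpha(H)=F^{-1}_{\tau^\circ_{x_0/\sigma}}\!\left(\frac{\int_0^\alpha g(z)\,dz}{\int_0^1 g(z)\,dz}\right),
\]
and since $F^{-1}_{\tau^\circ_{x_0/\sigma}}$ is strictly increasing, minimizing $T_\alpha(H)$ is equivalent to minimizing the ratio $\int_0^\alpha g/\int_\alpha^1 g$. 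Admissibility $H\in\mathcal{H}_0$ translates into $g$ being positive, increasing, and bounded above by $e^{-R_\infty/(2c\sigma^2)}$, while the budget $\int_0^1 H\,dr\le K$ becomes $\int_0^1(-\ln g)\,dz\le K/(2c\sigma^2)$.

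The key step is a rearrangement/Jensen argument. Given any admissible $H$, replace $g$ by the two-step function $\tilde g$ equal to the averages $\bar g_1:=\frac{1}{\alpha}\int_0^\alpha g$ on $[0,\alpha]$ and $\bar g_2:=\frac{1}{1-\alpha}\int_\alpha^1 g$ on $(\alpha,1]$. Monotonicity of $g$ yields $\bar g_1\le g(\alpha)\le \bar g_2$, so $\tilde g$ is still increasing, i.e.\ corresponds to an admissible two-step reward $\tilde H\in\mathcal{H}_0$. The ratio $\int_0^\alpha g/\int_\alpha^1 g$ is preserved by construction, and Jensen's inequality applied to the convex function $-\ln$ gives $\int_0^1(-\ln\tilde g)\,dz\le\int_0^1(-\ln g)\,dz$, so $\tilde H$ remains budget-feasible, with strict slack whenever $g$ was not already a.e.\ constant on each piece. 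This reduces the optimization to two-step rewards $H=H_1\mathbf{1}_{[0,\alpha]}+H_2\mathbf{1}_{(\alpha,1]}$ with $R_\infty\le H_2\le H_1$.

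Within this two-parameter family the objective is a decreasing function of $H_1-H_2$, so we maximize $H_1-H_2$ subject to $H_2\ge R_\infty$ and $\alpha H_1+(1-\alpha)H_2\le K$; elementary inspection forces $H_2=R_\infty$ and $H_1=R_\infty+(K-R_\infty)/\alpha$, which is precisely $H^*$. Uniqueness up to a.e.\ equivalence follows because any deviation from the two-step form makes Jensen strict, and the resulting budget slack can be channeled into enlarging $H_1-H_2$ to strictly decrease the ratio.

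The remaining assertions -- the explicit values of $T^*_\alpha$, $F_\mu$, $V$ and the feedback $a(t,x)$ -- are direct substitutions. Plugging $H^*$ into \eqref{eq:infNEquantile} and \eqref{eq:infNEvalue} yields the formulas for $T^*_\alpha$ and $V$, and inverting the quantile piecewise on $[0,\alpha]$ and $[\alpha,1]$ delivers $F_\mu$. The discontinuity of $H^*$ at rank $\alpha$ makes the equilibrium $R_\mu(s)$ a two-step function of $s$ with jump at $T^*_\alpha$, so $u(t,x)=E[\exp(R_\mu(t+\tau^\circ_{x/\sigma})/(2c\sigma^2))]$ reduces, via \eqref{FPTpdf}, to an affine combination of $N(x/(\sigma\sqrt{T^*_\alpha-t}))$; differentiating and applying $a^*=-\sigma^2 u_x/u$ from Proposition~\ref{prop:taupdf} gives the stated expression for $t<T^*_\alpha$ (and $a^*\equiv 0$ for $t>T^*_\alpha$, since $u$ becomes constant). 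The main obstacle is really just the rearrangement step: one must justify that averaging $g$ on $[0,\alpha]$ and $[\alpha,1]$ exactly preserves the objective ratio while weakly relaxing the budget, so that the freed-up budget can be converted into widening the two-step gap. Everything else is routine single-variable calculus.
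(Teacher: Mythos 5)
Your proposal is correct and is essentially the paper's own argument: both reduce the problem, via the explicit quantile formula \eqref{eq:infNEquantile} and the substitution $g=e^{-H/(2c\sigma^2)}$, to an application of Jensen's inequality for $-\ln$ plus an elementary finite-dimensional optimization (with uniqueness coming from strictness of Jensen), and the closed-form expressions for $T^*_\alpha$, $F_\mu$, $V$ and $a(t,x)$ then follow by direct substitution exactly as you describe. The only organizational difference is that the paper first truncates the reward to $R_\infty$ above rank $\alpha$ and then applies Jensen on $[0,\alpha]$ alone (Lemma~\ref{lemma:hstar}), whereas you average on both pieces first and recover $H_2=R_\infty$ from the two-parameter optimization; this is the same computation in a different order.
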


\begin{thm}\label{thm:optRT}
Let the deadline $T<\infty$, the minimum participation reward $R_\infty$, the total reward budget $K\ge R_\infty$ and the target completion rate $\alpha\in(0,1)$ be given. We have $T^{*}_\alpha(T)=\inf_{H\in\mathcal{H}^\alpha_T}T_\alpha(H;T)=\inf_{H\in\mathcal{H}}T_\alpha(H;T)$.
Let $T^{*}_\alpha$ be the optimal time given by Theorem~\ref{thm:optR}.
\begin{itemize}
\item If $T<T^{*}_\alpha$, then $\mathcal{H}^\alpha_T=\emptyset$ and $T^{*}_\alpha(T)=\Delta$.
%i.e. there is no feasible reward in $\mathcal{H}_T$ which attains, in equilibrium, the desired completion rate of $\alpha$ by time $T$.
\item If $T\ge T^{*}_\alpha$, then
%\[T^{*}_\alpha(T)=F^{-1}_{\tau^\circ_{x_0/\sigma}}\left(\frac{\alpha F_{\tau^\circ_{x_0/\sigma}}(T)}{\alpha+(z^*-\alpha)\exp\left(\frac{K-R_\infty}{2\alpha c\sigma^2}\right)}\right)\le T\]
$T^{*}_\alpha(T)=T^{*}_\alpha$ is uniquely attained (up to a.e.\ equivalence) by the uniform scheme with cutoff rank $\alpha$:
\[H^*(r)=R_\infty+\frac{K-R_\infty}{\alpha}1_{[0,\alpha]}(r).\]
%The terminal completion rate is
%\[\beta(H^*)=F_{\tau^\circ_{x_0/\sigma}}(T)+\alpha\left(1-F_{\tau^\circ_{x_0/\sigma}}(T)\right)\left(1-\exp\left(\frac{R_\infty-K}{2\alpha c\sigma^2}\right)\right).\]
The c.d.f.\ of $\mu=\mathcal{E}_T(H^*)$ on $[0,T]$, the equilibrium value $V$ attained by each player and the equilibrium effort $a(t,x)$, $t\in[0,T]$ in feedback form have the same expression as those of Theorem~\ref{thm:optR}.
%The c.d.f.\ of $\mu=\mathcal{E}_T(H^*)$ is given by
%\[F_\mu(t)=
%\begin{cases}
%\left(\alpha+(1-\alpha)\exp\left(\frac{K-R_\infty}{2\alpha c\sigma^2}\right)\right)F_{\tau^\circ_{x_0/\sigma}}(t), & \text{if } t\le T^*_\alpha(T),\\
%F_{\tau^\circ_{x_0/\sigma}}(t)+\alpha\left(1-F_{\tau^\circ_{x_0/\sigma}}(t)\right)\left(1-\exp\left(\frac{R_\infty-K}{2\alpha c\sigma^2}\right)\right), & \text{if } T^*_\alpha(T)<t\le T.
%\end{cases}
%\]
%The equilibrium value $V$ attained by each player and the equilibrium effort $a(t,x)$, $t\in[0,T]$ in feedback form have the same expression as those of Theorem~\ref{thm:optR}.
\item The minimum budget needed to ensure that $\alpha$ fraction of players finish by time $T$ is
\[K_{\min}=R_\infty+2\alpha c\sigma^2 \ln\left(\frac{\alpha}{1-\alpha}\cdot\frac{1-F_{\tau^\circ_{x_0/\sigma}}(T)}{F_{\tau^\circ_{x_0/\sigma}}(T)}\right).\]
The reward scheme which achieves the goal with budget $K_{\min}$ is given by
\[H(r)=R_\infty+\frac{K_{\min}-R_\infty}{\alpha}1_{[0,\alpha]}(r).\]

\item Under the given budget $K$, the maximum equilibrium completion rate $\alpha_{\max}$ attainable at time $T$ is the unique solution of
\[C_T:=\frac{F_{\tau^\circ_{x_0/\sigma}}(T)}{1-F_{\tau^\circ_{x_0/\sigma}}(T)}=\frac{\alpha_{\max}}{1-\alpha_{\max}}\exp\left(\frac{R_\infty-K}{2\alpha_{\max} c\sigma^2}\right). \footnote{$\alpha_{\max}$ can also be expressed as $\left(1+\frac{2c\sigma^2}{K-R_\infty}W\left(C^{-1}_T\right)\right)^{-1}$, where $W$ is the Lambert-W function.}
\]
The reward scheme which yields $\alpha_{\max}$ is given by
\[H(r)=R_\infty+\frac{K-R_\infty}{\alpha_{\max}}1_{[0,\alpha_{\max}]}(r).\]
\end{itemize}
\end{thm}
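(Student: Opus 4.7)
The plan is to convert the quantile-minimization into a two-parameter optimization, derive the sharp bound $T_\alpha(H;T)\ge T^*_\alpha$, and then exhibit $H^*$ as the unique minimizer when $T\ge T^*_\alpha$. Writing $H_c:=(H-R_\infty)/(2c\sigma^2)\ge 0$, $\beta:=\beta(H)$, $F_T:=F_{\tau^\circ_{x_0/\sigma}}(T)$ and $N:=\int_0^\alpha e^{-H_c(z)}\,dz$, the quantile formula \eqref{eq:NEquantile} combined with the equilibrium equation \eqref{eq:NErate} yields the compact identity $F_{\tau^\circ_{x_0/\sigma}}\bigl(T_\alpha(H;T)\bigr)=(1-F_T)N/(1-\beta)$. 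Minimizing $T_\alpha(H;T)$ is therefore equivalent to minimizing this ratio in $(N,\beta)$, subject to the constraints imposed by $H\in\mathcal{H}^\alpha_T$ (or $H\in\mathcal{H}$) and $\beta\ge\alpha$.

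Next, I derive two lower bounds on $N$. Applied to the convex function $e^{-x}$, Jensen's inequality yields $N\ge\alpha e^{-\bar H_c}$ where $\bar H_c=\int_0^\alpha H_c/\alpha$; the budget constraint---which, using $H\ge R_\infty$ on $[\alpha,1]$, reduces in either formulation to $\int_0^\alpha H\le K-(1-\alpha)R_\infty$---forces $\bar H_c\le(K-R_\infty)/(2\alpha c\sigma^2)$, so $N\ge\alpha E$ with $E:=e^{-(K-R_\infty)/(2\alpha c\sigma^2)}$. The pointwise bound $e^{-H_c}\le 1$ on $[\alpha,\beta]$ combined with \eqref{eq:NErate} also gives $N\ge F_T(1-\beta)/(1-F_T)-(\beta-\alpha)$. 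Substituting these into the identity produces two candidate lower bounds, $B_1(\beta)=\alpha E(1-F_T)/(1-\beta)$ (increasing in $\beta$) and $B_2(\beta)=F_T-(1-F_T)(\beta-\alpha)/(1-\beta)$ (decreasing in $\beta$). The pointwise maximum $\max(B_1,B_2)$ is therefore minimized exactly at their intersection $\beta^\ast:=F_T+\alpha(1-F_T)(1-E)$, and at this point a short computation gives the value $\alpha E/(\alpha E+1-\alpha)=F_{\tau^\circ_{x_0/\sigma}}(T^*_\alpha)$. This establishes the uniform lower bound $T_\alpha(H;T)\ge T^*_\alpha$.

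If $T<T^*_\alpha$, this bound is incompatible with the existence of any $\beta\in[\alpha,1)$ satisfying \eqref{eq:NErate}, so $\mathcal{H}^\alpha_T=\emptyset$. For $T\ge T^*_\alpha$, direct substitution of $H^*$ into \eqref{eq:NErate} confirms $\beta(H^*)=\beta^\ast$ and $T_\alpha(H^*;T)=T^*_\alpha$; since $H^*\in\mathcal{H}\cap\mathcal{H}^\alpha_T$ with full budget saturation in each, both infima equal $T^*_\alpha$. Uniqueness (up to a.e.\ equivalence) is obtained by tracking equality: Jensen is tight iff $H$ is constant on $[0,\alpha]$; the pointwise bound forces $H\equiv R_\infty$ on $[\alpha,\beta^\ast]$; budget saturation fixes the constant on $[0,\alpha]$ to $R_\infty+(K-R_\infty)/\alpha$; and monotonicity extends $H\equiv R_\infty$ to $[\beta^\ast,1]$. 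The displayed c.d.f., value and feedback effort then follow by plugging $H^*$ into Theorem~\ref{thm:explicit_soln}(i) and observing that $R_\mu$ takes only two values, so the finite-horizon equilibrium coincides on $[0,T]$ with the infinite-horizon one of Theorem~\ref{thm:optR}. The $K_{\min}$ and $\alpha_{\max}$ formulas arise from inverting $T^*_\alpha=T$ in $K$ and $\alpha$ respectively. I anticipate that the most delicate step is the identification and sharpness of the intersection $\beta^\ast$; the remaining verifications are elementary algebra.
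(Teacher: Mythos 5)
Your argument is correct, and it takes a genuinely different route from the paper's. The paper first proves a structural reduction (its ``CLAIM''): replacing $H$ by $H1_{[0,\alpha]}+R_\infty 1_{(\alpha,1]}$ preserves feasibility in both $\mathcal{H}$ and $\mathcal{H}^\alpha_T$ and does not increase $T_\alpha(H;T)$, which collapses the two feasible sets into one and makes \eqref{eq:NErate} \emph{affine} in $\beta$ as a function of $\mathcal{J}(h)=\int_0^\alpha h$; the objective then becomes the increasing function $\mathcal{J}(h)/\bigl(1-\alpha+e^{R_\infty/(2c\sigma^2)}\mathcal{J}(h)\bigr)$ of a single scalar, and the auxiliary Lemma~\ref{lemma:hstar} (Jensen) finishes the job. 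You skip the reduction entirely and instead bound $F_{\tau^\circ_{x_0/\sigma}}(T_\alpha(H;T))=(1-F_T)N/(1-\beta)$ from below by the pointwise maximum of two bounds in $\beta$ --- one increasing (Jensen plus the budget consequence $\int_0^\alpha H\le K-(1-\alpha)R_\infty$, which both feasible sets imply) and one decreasing (the trivial bound $e^{-H_c}\le 1$ on $[\alpha,\beta]$ fed back through \eqref{eq:NErate}) --- and identify the saddle value at their crossing $\beta^*$ with $F_{\tau^\circ_{x_0/\sigma}}(T^*_\alpha)$. Your computations check out: $\beta^*=F_T+\alpha(1-F_T)(1-E)$ satisfies $\beta^*\ge\alpha$ exactly when $T\ge T^*_\alpha$, $H^*$ attains $\beta(H^*)=\beta^*$ with budget saturation in both formulations, and the equality-tracking (strict monotonicity of the two bounds forces $\beta(H)=\beta^*$, Jensen forces constancy on $[0,\alpha]$, the pointwise bound forces $H\equiv R_\infty$ on $[\alpha,\beta^*]$, and monotonicity of $H$ propagates this to $[\beta^*,1]$) gives uniqueness. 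What the paper's route buys is a clean separation into a reusable one-dimensional lemma that also serves Theorems~\ref{thm:optR} and~\ref{thm:welfare}; what yours buys is that the lower bound $T_\alpha(H;T)\ge T^*_\alpha$ is established in one stroke for \emph{all} of $\mathcal{H}\cup\mathcal{H}^\alpha_T$ without the preliminary truncation argument, and the emptiness of $\mathcal{H}^\alpha_T$ for $T<T^*_\alpha$ drops out immediately. The one place to be careful in a write-up is that the min--max over $\beta$ is taken on $[\alpha,1)$, so you should note (as your feasibility check implicitly does) that when $\beta^*<\alpha$ the bound is still valid though attained only in the limit, which is precisely the regime $T<T^*_\alpha$.
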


% In numerical computations (not shown), we find that the completion time distribution is very close to a degenerate distribution with a part of the completion times  concentrated at the minimal quantile point, while the remaining players do not complete their project. The players have no incentive to finish early and they go for a bang-bang strategy - either they try to finish at the minimal quantile point, or they give up.

\subsubsection{Maximizing welfare}\label{sec:welfare}

We now find the reward scheme that maximizes the aggregate game value of all the players, again in the homogeneous case. 
%We have
 %and the mass of players being equal to one, the welfare is equal to the value .

%The next result shows that in a finite horizon game with a fixed population type and budget constraint,
%welfare is the largest when the reward system imposes uniformity to all but the bottom. For example, when $\sigma=0.25$, $T=x_0=c=K=1$ and $R_\infty=0$, the largest possible %welfare equals $0.1708$ and is achieved by uniformly distributing the money to the top 74.5\% agents. The infinite horizon result is not very informative, since the maximum welfare %is attained precisely when nobody exerts any effort, and the ranking is completely determined by luck.
%%$K=2$: $0.7953$, 99.83\%

\begin{thm}\label{thm:welfare}
Fix the participation reward $R_\infty$ and the reward budget $K\ge R_\infty$. Then,
\begin{itemize}
\item[(i)] When $T=\infty$, the maximum welfare $\sup_{H\in\mathcal{H}}V_\infty(H) =K$ is uniquely attained (up to a.e.\ equivalence) by the uniform scheme $H^*(r)\equiv K$
(thus, with zero effort by everyone).
\item[(ii)] When $T<\infty$, the maximum welfare
\begin{align*}
\sup_{H\in\mathcal{H}^0_T}V(H) &=R_\infty+2c\sigma^2 \ln \left(\frac{1-F_{\tau^\circ_{x_0/\sigma}}(T)}{1-\alpha}\right)
%\\&=R_\infty+\frac{K-R_\infty}{\alpha}-2c\sigma^2 \ln \left(\frac{\alpha}{F_{\tau^\circ_{x_0/\sigma}}(T)}\right)
\end{align*}
is uniquely attained (up to a.e.\ equivalence) by the uniform scheme with cutoff rank $\alpha$:
\[H^*(r)=R_\infty+\frac{K-R_\infty}{\alpha}1_{[0,\alpha]}(r),\]
where $\alpha$ is the maximum attainable completion rate by time $T$, given in Theorem \ref{thm:optRT}. Moreover, $H^*$ also maximizes the expected total effort given by Proposition~\ref{prop:expected-effort}.
\end{itemize}
\end{thm}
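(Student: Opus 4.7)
The plan is to exploit the semi-explicit value formulas from Theorem~\ref{thm:explicit_soln} and reduce both parts to optimization problems that have already been solved or that follow from Jensen's inequality.

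For part (i), I would start from Theorem~\ref{thm:explicit_soln}(ii), which gives
\[
V_\infty(H) \;=\; -2c\sigma^{2}\,\ln\!\left(\int_{0}^{1}\exp\!\Big(-\tfrac{H(z)}{2c\sigma^{2}}\Big)\,dz\right).
\]
Applying Jensen's inequality to the strictly convex function $-\ln$ on the probability space $([0,1], dz)$ yields
\[
V_\infty(H) \;\le\; \int_{0}^{1} H(z)\,dz \;\le\; K,
\]
where the last inequality is the budget constraint defining $\mathcal{H}$. Both upper bounds are attained by $H^{*}\equiv K$, so the supremum is $K$. For the uniqueness claim, I would note that strict convexity of $-\ln$ forces equality in Jensen's inequality only when $H$ is a.e.\ constant, and tightness in the budget forces that constant to be $K$.

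For part (ii), the key observation is that Theorem~\ref{thm:explicit_soln}(i) gives
\[
V(H) \;=\; R_\infty + 2c\sigma^{2}\ln\!\left(\frac{1-F_{\tau^{\circ}_{x_{0}/\sigma}}(T)}{1-\beta(H)}\right),
\]
which depends on $H$ only through the terminal completion rate $\beta(H)$ and is a strictly increasing function of $\beta(H)$. Hence maximizing $V$ over $\mathcal{H}^{0}_{T}$ is equivalent to maximizing $\beta(H)$ subject to the equilibrium-budget constraint. That maximization problem is exactly the one whose solution is recorded in the last bullet of Theorem~\ref{thm:optRT}: the maximizer of $\beta$ is the two-step scheme $H^{*}(r)=R_\infty+\frac{K-R_\infty}{\alpha}\,1_{[0,\alpha]}(r)$ with $\alpha=\alpha_{\max}$. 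Plugging $\beta(H^{*})=\alpha$ back into the value formula yields the asserted maximum welfare. Uniqueness (up to a.e.\ modification on $[0,\alpha_{\max}]$) reduces to the uniqueness portion of Theorem~\ref{thm:optRT} for the maximizer of $\beta$, which in turn follows from the one-to-one correspondence of Lemma~\ref{lemma:1to1}(ii) and Theorem~\ref{thm:ETrange}. The final sentence on expected total effort is then immediate from Proposition~\ref{prop:expected-effort}, since \eqref{eq:expected-effort} expresses the expected total effort as a strictly increasing affine function of $\beta(H)$ (with $x_{0}, \sigma, T$ held fixed), so the same $H^{*}$ maximizes it.

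The only genuinely subtle step is the uniqueness in (ii); I expect the bulk of the proof to be short, since both parts are essentially corollaries of earlier results. I would handle uniqueness by arguing that any welfare maximizer $H$ must attain $\beta(H)=\alpha_{\max}$, which by Theorem~\ref{thm:ETrange} pins down $H$ on $[0,\alpha_{\max}]$ up to an a.e.\ set, and then pointing out that the budget constraint being tight forces the value of $H$ on $[0,\alpha_{\max}]$ to equal $(K-R_\infty)/\alpha_{\max}+R_\infty$ a.e.\ (values beyond $\alpha_{\max}$ being payoff-irrelevant).
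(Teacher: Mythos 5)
Your proposal is correct and follows essentially the same route as the paper: part (i) is the Jensen-inequality argument that the paper packages as Lemma~\ref{lemma:hstar} (applied with $\alpha=1$ after the substitution $h=\exp(-H/(2c\sigma^2))$), and part (ii) is exactly the paper's reduction, via \eqref{eq:NEvalue} and Proposition~\ref{prop:expected-effort}, to maximizing the terminal completion rate, which is already solved in the last bullet of Theorem~\ref{thm:optRT}. One small caution on your uniqueness argument in (ii): Theorem~\ref{thm:ETrange} recovers $H$ from the \emph{full} equilibrium distribution $\mu$, not from the scalar $\beta(H)=F_\mu(T)$ alone, so that citation does not by itself pin down $H$ on $[0,\alpha_{\max}]$; the correct source of uniqueness is the equality case of Jensen in Lemma~\ref{lemma:hstar} (i.e., $\beta(H)=\alpha_{\max}$ forces $h$ to be the unique minimizer of $\mathcal{J}$ over $\mathfrak{h}$, which in turn forces the budget to be tight and $H$ to be constant on $[0,\alpha_{\max}]$), which is what your budget-tightness remark is implicitly invoking.
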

%\begin{remark}
%When $T<\infty$, the reward scheme which maximizes the terminal completion rate also maximizes the welfare as well as the expected total effort given by Proposition~\ref{prop:expected-effort}.
%\end{remark}

\subsubsection{Maximizing net profit}\label{sec:profit}

We now suppose that each project completed at time $t$ generates a profit of $g(t)=g(t;x_0)$ for a principal.
 We assume that $g$ is continuous, bounded and decreasing, and that $g\not\equiv g(\infty)$. The principal wants to maximize the expected  net profit $E\left[g(\tau)-R_\mu(\tau)\right]$, $ \tau\sim \mu,$
subject to the participation constraint $R\ge R_\infty$. We only consider the case $T=\infty$.

\begin{thm}\label{thm:net}
Suppose $T=\infty$. A reward scheme $H^*\in\mathcal{H}_0$ is optimal if and only if
\[H^*(r)=R_\infty+g(F^{-1}_{\mu^*}(r)\wedge t_b^*)-g(t_b^*),\]
where the ``bonus" deadline $t_b^*$ is given by $t^*_b=\inf\{z\ge 0: g(z)=g(z^*)\}$
for some
\[z^*=\argmax_{z\in[0,\infty)}  \frac{\int_0^\infty g(t\vee z) f_{\tau^\circ_{x_0/\sigma}}(t) \exp\left(\frac{g(t\wedge z)}{2c\sigma^2}\right)dt}{\int_0^\infty f_{\tau^\circ_{x_0/\sigma}}(s) \exp\left(\frac{g(s\wedge z)}{2c\sigma^2}\right)ds},\]
and where the associated equilibrium distribution $\mu^*$ has p.d.f.
\[f_{\mu^*}(t)=\frac{f_{\tau^\circ_{x_0/\sigma}}(t) \exp\left(\frac{g(t\wedge t^*_b)}{2c\sigma^2}\right)}{\int_0^\infty f_{\tau^\circ_{x_0/\sigma}}(s) \exp\left(\frac{g(s\wedge t^*_b)}{2c\sigma^2}\right) ds}.\]
The maximum net profit is
\[U=  \frac{\int_0^\infty g(t\vee t^*_b) f_{\tau^\circ_{x_0/\sigma}}(t) \exp\left(\frac{g(t\wedge t^*_b)}{2c\sigma^2}\right) dt}{\int_0^\infty f_{\tau^\circ_{x_0/\sigma}}(s) \exp\left(\frac{g(s\wedge t^*_b)}{2c\sigma^2}\right)ds}-R_\infty.\]
\end{thm}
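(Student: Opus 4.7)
The plan is to use the reverse-engineering of Theorem~\ref{thm:Erange} to convert the optimization over reward functions $H$ of rank into an optimization over the reward-as-function-of-time $h(t):=H(F_\mu(t))$, and then to identify the optimal $h$ by a pointwise maximization argument.

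By Theorem~\ref{thm:Erange}(i), every $H\in\mathcal{H}_0$ realizing a given $\mu\in\mathcal{E}(\mathcal{H}_0)$ is of the form $H_\mu+C$ with $C\ge R_\infty-2c\sigma^2\ln(\inf\zeta_\mu)$. Since $\mu$, and hence $E[g(\tau)]$, is unchanged while $E[R_\mu(\tau)]=\int_0^1(H_\mu+C)\,dr$ strictly increases in $C$, the principal always picks the smallest admissible $C$, forcing $h(t)=R_\infty+2c\sigma^2\ln(\zeta_\mu(t)/\inf\zeta_\mu)$. As $\mu$ varies over $\mathcal{E}(\mathcal{H}_0)$, the function $h$ ranges bijectively over the class $\mathcal{A}$ of bounded, decreasing functions $h:\bbR_+\to[R_\infty,\infty)$ with $\inf h=R_\infty$, the inverse being $f_\mu(t)=e^{h(t)/(2c\sigma^2)}f_{\tau^\circ_{x_0/\sigma}}(t)/D(h)$ with $D(h):=\int e^{h/(2c\sigma^2)}f_{\tau^\circ_{x_0/\sigma}}\,dt$. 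The problem becomes $\sup_{h\in\mathcal{A}}\mathcal{U}(h)$ with
\[
\mathcal{U}(h)=\frac{\int_0^\infty(g(t)-h(t))\,e^{h(t)/(2c\sigma^2)}f_{\tau^\circ_{x_0/\sigma}}(t)\,dt}{D(h)}.
\]

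For the key pointwise step, fix $h\in\mathcal{A}$ and set $u:=\mathcal{U}(h)$, so that $\int(g-h-u)e^{h/(2c\sigma^2)}f_{\tau^\circ_{x_0/\sigma}}\,dt=0$. For each $t$, the function $v\mapsto(g(t)-v-u)e^{v/(2c\sigma^2)}$ on $[R_\infty,\infty)$ is maximized at $h_u(t):=\max(R_\infty,\,g(t)-u-2c\sigma^2)$ by a direct calculus check. Since $u\ge\mathcal{U}(R_\infty)=\int g\,dF_{\tau^\circ_{x_0/\sigma}}-R_\infty\ge g(\infty)-R_\infty$ (and in particular $u\ge g(\infty)-R_\infty-2c\sigma^2$), the function $h_u$ is decreasing, bounded and satisfies $\inf h_u=R_\infty$, so it lies in $\mathcal{A}$. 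Integrating the pointwise inequality gives $(\mathcal{U}(h_u)-u)D(h_u)\ge0$, so $\mathcal{U}(h_u)\ge\mathcal{U}(h)$, with equality iff $h=h_u$ a.e. Hence the supremum is attained within the one-parameter family $\{h_u\}$.

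To finish, I reparameterize by $z\ge0$ via $g(z)=R_\infty+u+2c\sigma^2$, giving $h_u(t)=R_\infty+g(t\wedge z)-g(z)$. The additive shift $-g(z)$ cancels in the ratio defining $f_\mu$, yielding the density $f_{\mu_z}$ stated in the theorem; splitting $\int(g-h_u)f_{\mu_z}\,dt$ at $t=z$ produces $\mathcal{U}(h_u)=\int g(t\vee z)f_{\mu_z}(t)\,dt-R_\infty$. A maximizer $z^*\in[0,\infty)$ exists by continuity of $z\mapsto\mathcal{U}(h_z)$, the limit $\mathcal{U}(h_z)\to g(\infty)-R_\infty$ as $z\to\infty$, and the strict inequality $\mathcal{U}(h_z)>g(\infty)-R_\infty$ at every finite $z$ (which uses $g\not\equiv g(\infty)$ and $f_{\mu_z}>0$). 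Since $h_z$ depends on $g$ only through $g(\cdot\wedge z)$, we may take $t_b^*:=\inf\{z\ge0:g(z)=g(z^*)\}$ in the formulas, which gives the stated forms of $H^*$, $\mu^*$ and $U$. The main technical obstacle is checking that the pointwise maximizer $h_u$ remains in $\mathcal{A}$ (equivalently, confirming the lower bound $u\ge g(\infty)-R_\infty-2c\sigma^2$ via the no-effort benchmark) and ruling out escape to $z=\infty$ for the outer maximization.
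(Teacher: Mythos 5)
Your argument is correct and arrives at the same one-parameter family $h_z(t)=R_\infty+g(t\wedge z)-g(z)$ and the same auxiliary objective $\widetilde U(z)$ as the paper, but by a genuinely different route. The paper parameterizes by the density $f_\mu$, fixes $b=\inf f_\mu/f_{\tau^\circ_{x_0/\sigma}}$, maximizes pointwise over $f_\mu\ge bf_{\tau^\circ_{x_0/\sigma}}$ with a Lagrange multiplier enforcing $\int f_\mu=1$, splits into the cases $b<b_0$ and $b\ge b_0$, and then needs a separate ``CLAIM'' to show that maximizing the resulting $U(b)$ over $b$ is equivalent to maximizing $\widetilde U(z)$ over $z$. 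You instead parameterize by the time-reward $h(t)=H(F_\mu(t))$ and run a Dinkelbach-style self-improvement step for the fractional objective: with $u=\mathcal{U}(h)$, the pointwise maximizer $h_u(t)=\max(R_\infty,\,g(t)-u-2c\sigma^2)$ of $v\mapsto(g(t)-v-u)e^{v/(2c\sigma^2)}$ satisfies $\mathcal{U}(h_u)\ge u$, with equality iff $h=h_u$ a.e. This collapses the problem onto the one-parameter family in a single stroke, dispenses with the Lagrange multiplier and the case analysis in $b$, and delivers the ``only if'' direction directly from the strictness of the pointwise maximum; the paper's approach, by contrast, keeps the constraint $f_\mu\ge bf_{\tau^\circ_{x_0/\sigma}}$ explicit and must track where it binds (the point $t_b$), which is where the two-case analysis comes from.

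Two small points need tightening, neither fatal. First, the assertion ``$u\ge\mathcal{U}(R_\infty)$'' is not true for an arbitrary $h\in\mathcal{A}$; it must be phrased as a harmless restriction: the constant $R_\infty$ is feasible, so any $h$ with $\mathcal{U}(h)<\mathcal{U}(R_\infty)$ is irrelevant for the supremum, and any optimizer automatically satisfies the bound -- this is exactly what you need for $h_u$ to land in $\mathcal{A}$ (or else shift $h_u$ down by $\inf h_u-R_\infty$, which only increases the value). Second, the claim that $\mathcal{U}(h_z)>g(\infty)-R_\infty$ at \emph{every} finite $z$ fails when $g$ attains $g(\infty)$ at a finite time (then $\widetilde U(z)=g(\infty)-R_\infty$ for all $z$ past that time); but existence of $z^*$ only requires the strict inequality at $z=0$ together with $\lim_{z\to\infty}\mathcal{U}(h_z)=g(\infty)-R_\infty$ and continuity, which is precisely the argument the paper uses.
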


Note that in equilibrium the players receive $H^*(F_{\mu^*}(t))=R_\infty+g(t\wedge t^*_b)-g(t^*_b)$ for finishing at time $t$. That is, the optimal reward for finishing at time $t$ is a linear transformation of $g(t)$ for those who finish before the bonus deadline  $t^*_b$; otherwise it is  the minimum guaranteed payment. Thus, it is optimal for the principal to align the players' interest  with her own.

\noindent\textbf{Proof of Theorem \ref{thm:net}.}
By Theorem~\ref{thm:Erange}, we only need to optimize over $\mu\in\mathcal{E}(\mathcal{H}_0)$, which can be realized by the reward $H_\mu+C$ for any constant $C\ge R_\infty -2c\sigma^2 \ln(\inf f_{\mu}/f_{\tau^\circ_{x_0/\sigma}})$. It is clear that among the translations of $H_\mu$, the principal should choose the smallest $C$ which meets the reservation reward constraint, namely, $C_\mu:=R_\infty -2c\sigma^2 \ln(\inf f_{\mu}/f_{\tau^\circ_{x_0/\sigma}})$. Hence we can rewrite the optimization problem as
\[U=\sup_{f_\mu} \int_0^\infty \left[g(t)-2c\sigma^2 \ln\left(\frac{f_\mu(t)}{f_{\tau^\circ_{x_0/\sigma}}(t)}\right)-R_\infty+2c\sigma^2  \ln \left(\inf\frac{f_\mu}{f_{\tau^\circ_{x_0/\sigma}}}\right) \right]f_\mu(t)dt.\]
The proof consists of  two steps.
\begin{description}
\item[Step 1] Fix $b=\inf f_\mu/f_{\tau^\circ_{x_0/\sigma}}\in(0,1]$ and solve the constraint optimization problem:
\begin{align*}
U(b):
%&=\sup_{f_\mu\ge b f_{\tau^\circ_{x_0/\sigma}}} \int_0^\infty \left[g(t)-2c\sigma^2 \ln\left(\frac{f_\mu(t)}{b f_{\tau^\circ_{x_0/\sigma}}(t)}\right)-R_\infty\right]f_\mu(t)dt\\
&=\sup_{f_\mu\ge bf_{\tau^\circ_{x_0/\sigma}}} \int_0^\infty \left[g(t)-2c\sigma^2 \ln\left(\frac{f_\mu(t)}{b f_{\tau^\circ_{x_0/\sigma}}(t)}\right)\right]f_\mu(t)dt-R_\infty
\end{align*}
subject to the additional integral constraint
\[\int_0^\infty f_\mu(t)dt=1.\]
In addition, we also need to $f_\mu/f_{\tau^\circ_{x_0/\sigma}}$ to be bounded and decreasing in order to obtain a bounded, decreasing reward function $H_\mu$. This can be verified after we find the optimizer.
\item[Step 2] Maximize $U(b)$ over $b\in(0,1]$.
\end{description}

We introduce a Lagrange multiplier $\lambda$ to handle the integral constraint. Define
\begin{align*}
L(f_\mu,\lambda):&=\int_0^\infty \left[g(t)-2c\sigma^2 \ln\left(\frac{f_\mu(t)}{b f_{\tau^\circ_{x_0/\sigma}}(t)}\right)\right]f_\mu(t)dt -\lambda\left(\int_0^\infty f_\mu(t)dt-1\right).
\end{align*}
For each fixed $\lambda$, the integrand, being a concave function of $f_\mu$, attains a pointwise maximum at
\begin{equation}\label{eq:fstar0}
%f_{\mu^*}(t)=bf_{\tau^\circ_{x_0/\sigma}}(t) \left\{\exp\left(\frac{g(t)-\lambda_0}{2c\sigma^2}-1\right)\vee 1\right\}
f_{\mu(b)}(t)=bf_{\tau^\circ_{x_0/\sigma}}(t) \exp\left(\left[\frac{g(t)-\lambda}{2c\sigma^2}-1\right]^+\right)
\end{equation}
on $[bf_{\tau^\circ_{x_0/\sigma}}(t),\infty)$.
%Either $\lambda>g(0)-2c\sigma^2$ in which case $f_{\mu(b)}(t)=bf_{\tau^\circ_{x_0/\sigma}}(t)$ for all $t\ge 0$, or $\lambda\le g(0)-2c\sigma^2$ in which case we find $\lambda$ via
We then find $\lambda$ via
\begin{equation}\label{eq:lambda}
1=\int_0^\infty f_{\mu(b)}(s)ds=\int_0^\infty bf_{\tau^\circ_{x_0/\sigma}}(s) \exp\left(\left[\frac{g(s)-\lambda}{2c\sigma^2}-1\right]^+\right)ds=:\phi(\lambda).
\end{equation}
Note that the above equation has a unique solution in $(-\infty, g(0)-2c\sigma^2]$ since $\phi$ is continuous, strictly decreasing and satisfies $\phi(g(0)-2c\sigma^2)=b\le 1$ and $\lim_{\lambda\rightarrow -\infty}\phi(\lambda)=\infty$.
Since any $\lambda\ge g(0)-2c\sigma^2$ (which is only possible when $b=1$) leads to the same $f_{\mu(b)}$, we may assume without loss of generality that $\lambda\le g(0)-2c\sigma^2$. It is clear that for each $b\in(0,1]$, $f_{\mu(b)}/f_{\tau^\circ_{x_0/\sigma}}$ is bounded and decreasing. So the associated reward scheme $H_{\mu(b)}+C_{\mu(b)}\in\mathcal{H}_0$.

Define
\[b_0:= \left(\int_0^\infty f_{\tau^\circ_{x_0/\sigma}}(t) \exp\left(\frac{g(t)-g(\infty)}{2c\sigma^2}\right)dt\right)^{-1}\in(0,1).\]
It is not hard to see from  \eqref{eq:lambda} that $b\ge b_0$ if and only if $g(\infty)\le 2c\sigma^2+\lambda$. We consider two cases for $b$.

\begin{itemize}
\item[Case 1.] If $0<b< b_0$, or equivalently, $g(\infty)> 2c\sigma^2+\lambda$, the positive part in \eqref{eq:fstar0} and \eqref{eq:lambda} can be removed, leading to
\[f_{\mu(b)}(t)=\frac{f_{\tau^\circ_{x_0/\sigma}}(t) \exp\left(\frac{g(t)-\lambda}{2c\sigma^2}-1\right)}{\int_0^\infty f_{\tau^\circ_{x_0/\sigma}}(s) \exp\left(\frac{g(s)-\lambda}{2c\sigma^2}-1\right)ds}=b_0 f_{\tau^\circ_{x_0/\sigma}}(t)
\exp\left(\frac{g(t)-g(\infty)}{2c\sigma^2}\right).\]

In this case,
\begin{align*}
U(b)&=\int_0^\infty \left[g(t)-2c\sigma^2 \ln\left(\frac{f_{\mu(b)}(t)}{b f_{\tau^\circ_{x_0/\sigma}}(t)}\right)\right]f_{\mu(b)}(t)dt-R_\infty\\
&=\int_0^\infty \left[g(\infty)+2c\sigma^2 \ln \frac{b}{b_0}\right]b_0 f_{\tau^\circ_{x_0/\sigma}}(t)
\exp\left(\frac{g(t)-g(\infty)}{2c\sigma^2}\right)dt-R_\infty\\
&<\int_0^\infty g(\infty)b_0 f_{\tau^\circ_{x_0/\sigma}}(t)\exp\left(\frac{g(t)-g(\infty)}{2c\sigma^2}\right)dt-R_\infty\\
&=\frac{\int_0^\infty g(\infty) f_{\tau^\circ_{x_0/\sigma}}(t)\exp\left(\frac{g(t)}{2c\sigma^2}\right)dt}{\int_0^\infty f_{\tau^\circ_{x_0/\sigma}}(s) \exp\left(\frac{g(s)}{2c\sigma^2}\right)ds}-R_\infty=:U_0.
\end{align*}

\item[Case 2.] If $b_0\le b\le 1$, or equivalently, $g(\infty)\le 2c\sigma^2+\lambda$, let
\begin{equation*}
t_b:=\inf\{t\ge 0: g(t)\le 2c\sigma^2+\lambda\}\in[0,\infty]
\end{equation*}
be the point after which the constraint $f_\mu\ge bf_{\tau^\circ_{x_0/\sigma}}$ will be binding. By the continuity of $g$ and that $g(0)-\lambda\ge 2c\sigma^2$, we must have $g(t_b)=2c\sigma^2+\lambda$. It follows that
\begin{equation*}
\left[\frac{g(s)-\lambda}{2c\sigma^2}-1\right]^+= \left[\frac{g(s)-g(t_b)}{2c\sigma^2}\right]^+ =\frac{g(s\wedge t_b)-g(t_b)}{2c\sigma^2}.
\end{equation*}
Thus, we are again able to get rid of the positive part in \eqref{eq:fstar0} and \eqref{eq:lambda}, and get
%\[f_{\mu(b)}(t)=\frac{f_{\tau^\circ_{x_0/\sigma}}(t) \exp\left(\frac{g(t\wedge t_b)-g(t_b)}{2c\sigma^2}\right)}{\int_0^\infty f_{\tau^\circ_{x_0/\sigma}}(s) \exp\left(\frac{g(s\wedge t_b)-g(t_b)}{2c\sigma^2}\right) ds}=\frac{f_{\tau^\circ_{x_0/\sigma}}(t) \exp\left(\frac{g(t\wedge t_b)}{2c\sigma^2}\right)}{\int_0^\infty f_{\tau^\circ_{x_0/\sigma}}(s) \exp\left(\frac{g(s\wedge t_b)}{2c\sigma^2}\right) ds},\]
\begin{equation}\label{eq:fstar}
f_{\mu(b)}(t)
%=\frac{f_{\tau^\circ_{x_0/\sigma}}(t) \exp\left(\frac{g(t\wedge t_b)-g(t_b)}{2c\sigma^2}\right)}{\int_0^\infty f_{\tau^\circ_{x_0/\sigma}}(s) \exp\left(\frac{g(s\wedge t_b)-g(t_b)}{2c\sigma^2}\right) ds}
=bf_{\tau^\circ_{x_0/\sigma}}(t) \exp\left(\frac{g(t\wedge t_b)-g(t_b)}{2c\sigma^2}\right),
\end{equation}
and %$t_b$ can be characterized as the smallest solution to
\begin{equation}\label{eq:tb}
b=\left(\int_0^\infty f_{\tau^\circ_{x_0/\sigma}}(s) \exp\left(\frac{g(s\wedge t_b)-g(t_b)}{2c\sigma^2}\right)ds\right)^{-1}=:\psi(t_b).
\end{equation}
It can be shown that the $[b_0,1]$-valued function $\psi$ is decreasing on $[0,\infty]$ and strictly decreasing on any interval where $g$ is strictly decreasing. This implies $\psi(z_1)=\psi(z_2)$ if and only if $g(z_1)=g(z_2)$. So if $\psi(z)=b=\psi(t_b)$, then $g(z)=g(t_b)$, and since $t_b$ is the first time $g$ hits $2c\sigma^2+\lambda$, we must have $z\ge t_b$. In other words, $t_b$ can be characterized as the smallest solution of $\psi(z)=b$. Alternatively, if $b=\psi(z_0)$, then $t_b=\inf\{z\ge 0: g(z)=g(z_0)\}$, independent of the choice of $z_0$.

Using \eqref{eq:fstar}, we obtain
\begin{align*}
U(b)&=\int_0^\infty \left[g(t)-2c\sigma^2 \ln\left(\frac{f_{\mu(b)}(t)}{b f_{\tau^\circ_{x_0/\sigma}}(t)}\right)\right]f_{\mu(b)}(t)dt-R_\infty\\
&=\int_0^\infty \left[g(t)-g(t\wedge t_b)+g(t_b)\right] b f_{\tau^\circ_{x_0/\sigma}}(t) \exp\left(\frac{g(t\wedge t_b)-g(t_b)}{2c\sigma^2}\right) dt-R_\infty\\
&=\frac{\int_0^\infty g(t\vee t_b)  f_{\tau^\circ_{x_0/\sigma}}(t) \exp\left(\frac{g(t\wedge t_b)}{2c\sigma^2}\right)dt}{\int_0^\infty f_{\tau^\circ_{x_0/\sigma}}(s) \exp\left(\frac{g(s\wedge t_b)}{2c\sigma^2}\right)ds}-R_\infty, \quad b\in[b_0,1].
\end{align*}
Observe that setting $t_b=\infty$ in the above expression yields $U_0$.
\end{itemize}

Combining the two cases, we see that $U(b)<\widetilde U(\infty)$ if $b\in(0,b_0)$ and $U(b)=\widetilde U(t_b)$ if $b\in[b_0,1]$, where we introduce the auxiliary objective function
\[\widetilde U(z):=\frac{\int_0^\infty g(t\vee z)  f_{\tau^\circ_{x_0/\sigma}}(t) \exp\left(\frac{g(t\wedge z)}{2c\sigma^2}\right)dt}{\int_0^\infty f_{\tau^\circ_{x_0/\sigma}}(s) \exp\left(\frac{g(s\wedge z)}{2c\sigma^2}\right)ds}-R_\infty.\]
Clearly, $\sup_{b\in(0,1]}U(b)\le \sup_{z\in [0,\infty]}\widetilde U(z)$. Since $\widetilde U(z)$ is continuous and $\widetilde U(0)> g(\infty)-R_\infty=\widetilde U(\infty)$, $\sup_{z\in [0,\infty]} \widetilde U(z)$ is attained  by some $z^*<\infty$ which can be found numerically. %Define %Let $t_{b^*}$ be defined by \eqref{eq:tb}.% which equals the smallest solution of $\psi(z)=b$.

\textbf{CLAIM}: Maximizing $U(b)$ is equivalent to maximizing $\widetilde U(z)$ in the following sense:
\begin{itemize}
\item[(i)] $\sup_{b\in(0,1]}U(b)= \sup_{z\in [0,\infty]}\widetilde U(z)$;
\item[(ii)] $b^*\in\argmax_{b\in(0,1]} U(b)$ if and only if $b^*=\psi(z^*)$ for some $z^*\in \argmax_{z\in [0,\infty]}\widetilde U(z)$.
%\item[(ii)] If $z^*\in\argmax_{z\in [0,\infty]}\widetilde U(z)$, then $\psi(z^*)\in \argmax_{b\in(0,1]} U(b)$;
%\item[(iii)] If $b^*\in \argmax_{b\in(0,1]} U(b)$, then $b^*\in[b_0,1]$ and $t_{b^*}\in \argmax_{z\in [0,\infty]}\widetilde U(z)$.
\end{itemize}

The proof of the claim relies on the observation that if $b=\psi(z)$, then $U(b)=\widetilde U(t_{b})\ge \widetilde U(z)$. Indeed, $b=\psi(z)$ implies that
$z\ge t_b$ and $g(z)=g(t_b)$ as we have argued before, which further yields $g(t\wedge z)=g(t\wedge t_{b})$. Consequently,
\begin{align*}
U(b)=\widetilde U(t_{b})
&=\frac{\int_0^\infty g(t\vee t_{b})  f_{\tau^\circ_{x_0/\sigma}}(t) \exp\left(\frac{g(t\wedge t_b)}{2c\sigma^2}\right)dt}{\int_0^\infty f_{\tau^\circ_{x_0/\sigma}}(s) \exp\left(\frac{g(s\wedge t_b)}{2c\sigma^2}\right)ds}-R_\infty\\
&=\frac{\int_0^\infty g(t\vee t_{b})  f_{\tau^\circ_{x_0/\sigma}}(t) \exp\left(\frac{g(t\wedge z)}{2c\sigma^2}\right)dt}{\int_0^\infty f_{\tau^\circ_{x_0/\sigma}}(s) \exp\left(\frac{g(s\wedge z)}{2c\sigma^2}\right)ds}-R_\infty\\
&\ge \frac{\int_0^\infty g(t\vee z)  f_{\tau^\circ_{x_0/\sigma}}(t) \exp\left(\frac{g(t\wedge z)}{2c\sigma^2}\right)dt}{\int_0^\infty f_{\tau^\circ_{x_0/\sigma}}(s) \exp\left(\frac{g(s\wedge z)}{2c\sigma^2}\right)ds}-R_\infty=\widetilde U(z).
\end{align*}
The above observation implies that if $z^*$ is optimal for $\widetilde U$, then we have
$\sup_{b\in(0,1]}U(b)\ge U(\psi(z^*))\ge \widetilde U(z^*)=\sup_{z\in [0,\infty]}\widetilde U(z)\ge \sup_{b\in(0,1]}U(b).$
This proves the first claim and the ``if" part of the second claim. For the ``only if" part of the second claim, suppose $b^*$ is optimal for $U$, then
$U(b^*)=\sup_{b\in(0,1]}U(b)= \sup_{z\in [0,\infty]}\widetilde U(z)\ge \widetilde U(\infty)$. Since $U(b)<\widetilde U(\infty)$ for all $b\in(0,b_0)$, we must have $b^*\in [b_0,1]$. It follows that $b^*=\psi(t_{b^*})$ and
$\widetilde U(t_{b^*})=U(b^*)=\sup_{b\in(0,1]}U(b)= \sup_{z\in [0,\infty]}\widetilde U(z).$
The latter implies $t_{b^*}$ is optimal for $\widetilde U$.

In view of the claim, we can find the optimal value and all optimizers of $\sup_{b\in(0,1]}U(b)$ by solving the auxiliary problem. For each optimal $z^*$ for the auxiliary problem, we get an optimal $b^*=\psi(z^*)$. We then recover $f_{\mu^*}=f_{\mu(b^*)}$ by \eqref{eq:fstar} and $H_{\mu^*}$ by Theorem~\ref{thm:Erange}.
%Note that $b^*=\psi(z^*)$ implies $g(t\wedge z^*)-g(z^*)=g(t\wedge t_{b^*})-g(t_{b^*})$. So $f_{\mu^*}$ can also be written directly in terms of $z^*$:
%\[f_{\mu^*}(t)=\psi(z^*) f_{\tau^\circ_{x_0/\sigma}}(t) \exp\left(\frac{g(t\wedge z^*)-g(z^*)}{2c\sigma^2}\right).\]
The optimal reward is given by
\[H^*(r)=H_{\mu^*}(r)+R_\infty-2c\sigma^2 \ln b^* =R_\infty+g(F^{-1}_{\mu^*}(r)\wedge t_{b^*})-g(t_{b^*}).\]
\qed

\subsection{Extension: completion rate-dependent pie and multiple equilibria}\label{sec:un-fixed pie}

Consider a reward pie that depends on the aggregate completion rate by time $T<\infty$. (That rate is 100\% when $T=\infty$.) Specifically, we assume the reward for finishing at time $t$ rank $r$, and when the population  completion rate is $\beta$, is given by
\begin{equation}\label{eq:rwd-unfixed-pie}
R(t,r,\beta)=1_{\{t\le T\}} H(r,\beta)+1_{\{t> T\}} R_\infty(\beta),
\end{equation}
where $H:[0,1]\times [F_{\tau^\circ_{x_0/\sigma}}(T),1]\mapsto \bbR$ and $R_\infty:  [F_{\tau^\circ_{x_0/\sigma}}(T),1]\mapsto \bbR$ are continuous in $\beta$.
What we have in mind is an organization or a society in which the overall wealth coming from individual projects is higher when more individuals complete their goals. Similar to the case of a fixed pie, the equilibrium also admits a semi-explicit characterization:
\begin{thm}\label{thm:explicit_solnB}
Let $T<\infty$. Suppose the reward function is of the form \eqref{eq:rwd-unfixed-pie}. Then, there exist at least one
equilibrium, and a distribution  $\mu$ is  an equilibrium completion time distribution if and only if
\begin{equation}\label{eq:NEquantileB}
T^\mu_r=F^{-1}_{\tau^\circ_{x_0/\sigma}}\left(\frac{1-F_{\tau^\circ_{x_0/\sigma}}(T)}{1-F_\mu(T)}\int_0^r \exp\left(\frac{R_\infty(F_\mu(T))-H(z,F_\mu(T))}{2c\sigma^2}\right)dz\right), \quad r\in [0, F_\mu(T)],
\end{equation}
where
\begin{equation*}
F_{\tau^\circ_{x_0/\sigma}}(t)=2\left(1-N\left(\frac{x_0}{\sigma \sqrt{t}}\right)\right).
\end{equation*}
%\[F^{-1}_{\tau^\circ_{x_0/\sigma}}(y)=\left(\frac{x}{\sigma N^{-1}\left(1-y/2\right)}\right)^2\]
Moreover, the equilibrium terminal completion rate $F_\mu(T)\in (0,1)$ is a solution of
\begin{equation}\label{eq:NErateB}
\frac{F_{\tau^\circ_{x_0/\sigma}}(T)}{1-F_{\tau^\circ_{x_0/\sigma}}(T)}=\frac{1 }{1-F_\mu(T)} \int_0^{F_\mu(T)} \exp\left(\frac{R_\infty(F_\mu(T))-H(z,F_\mu(T))}{2c\sigma^2}\right) dz.
\end{equation}
The associated value and expected total effort of the game are given by
\begin{equation}\label{eq:NEvalueB}
V(\mu)=R_\infty(F_\mu(T))+2c\sigma^2\ln \left(\frac{1-F_{\tau^\circ_{x_0/\sigma}}(T)}{1-F_\mu(T)}\right)
\end{equation}
and \eqref{eq:expected-effort}, respectively.
\end{thm}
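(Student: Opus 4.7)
The plan is to reduce the problem to Theorem~\ref{thm:explicit_soln}(i) by conditioning on the terminal completion rate $\beta:=F_\mu(T)$. The key observation is that a single player, when computing her best response to the population distribution $\mu$, only sees the value $\beta$ through the reward function; she treats $H(\cdot,\beta)$ and $R_\infty(\beta)$ as fixed numbers/functions. So, for any candidate equilibrium $\mu$ with terminal completion rate $\beta$, the single-player optimization is identical to the one analyzed in Section~\ref{sec:hom} with the substitutions $H(\cdot)\leftarrow H(\cdot,\beta)$ and $R_\infty\leftarrow R_\infty(\beta)$.

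First I would establish the characterization (the ``if and only if'' part). Fix a candidate equilibrium $\mu$ and set $\beta=F_\mu(T)$. Repeating the fixed-point computation in the proof of Theorem~\ref{thm:explicit_soln}(i) verbatim with the reward $R(t,r)=1_{\{t\le T\}}H(r,\beta)+1_{\{t>T\}}R_\infty(\beta)$ yields that the quantile function of $\mu$ must satisfy \eqref{eq:NEquantileB} and that $\beta$ must satisfy \eqref{eq:NErateB}. Conversely, any $\mu$ with quantile function given by \eqref{eq:NEquantileB} (for some $\beta=F_\mu(T)$ satisfying \eqref{eq:NErateB}) induces, when plugged into the single-player problem, the same best-response completion time distribution by Proposition~\ref{prop:taupdf}, and hence is an equilibrium.

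Next, existence reduces to solving the scalar equation \eqref{eq:NErateB} for $\beta\in(0,1)$. Define
\[
\Phi(\beta):=\frac{1}{1-\beta}\int_0^\beta \exp\left(\frac{R_\infty(\beta)-H(z,\beta)}{2c\sigma^2}\right)dz-\frac{F_{\tau^\circ_{x_0/\sigma}}(T)}{1-F_{\tau^\circ_{x_0/\sigma}}(T)},\quad \beta\in[0,1).
\]
Since $H$ and $R_\infty$ are continuous in $\beta$ and the integrand is bounded (because $H$ is bounded on $[0,1]\times[F_{\tau^\circ_{x_0/\sigma}}(T),1]$), $\Phi$ is continuous on $[0,1)$. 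At $\beta=0$, $\Phi(0)<0$; moreover, the boundedness of $R_\infty(\beta)-H(z,\beta)$ from below yields a strictly positive lower bound on the exponential, so the prefactor $1/(1-\beta)$ forces $\Phi(\beta)\to+\infty$ as $\beta\nearrow 1$. The intermediate value theorem then provides a solution $\beta^*\in(0,1)$, which determines an equilibrium $\mu$ via \eqref{eq:NEquantileB}.

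Finally, the formulas for the value \eqref{eq:NEvalueB} and the expected total effort \eqref{eq:expected-effort} follow directly by substituting the replacements $H(\cdot)\to H(\cdot,\beta)$ and $R_\infty\to R_\infty(\beta)$ into the corresponding formulas \eqref{eq:NEvalue} of Theorem~\ref{thm:explicit_soln}(i) and Proposition~\ref{prop:expected-effort}, since the single player's perspective is unchanged once $\beta$ is fixed. The main subtlety is establishing existence without uniqueness: $\Phi$ need not be monotone in $\beta$ because $H$ and $R_\infty$ depend nontrivially on $\beta$, so multiple equilibria can arise, but Brouwer/IVT still delivers at least one fixed point.
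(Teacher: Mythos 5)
Your proposal is correct and follows essentially the same route as the paper: freeze $\beta=F_\mu(T)$, rerun the fixed-point computation of Theorem~\ref{thm:explicit_soln}(i) with $H(\cdot,\beta)$ and $R_\infty(\beta)$ to get the quantile characterization and the scalar consistency equation \eqref{eq:NErateB}, and obtain existence from the intermediate value theorem applied to the continuous function $\phi(\beta)$ with $\phi(0)=0$ and $\phi(\beta)\to\infty$ as $\beta\to 1$. The formulas for the value and expected total effort then follow exactly as in the fixed-pie case, as you note.
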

\begin{proof}
The proof is similar to the case of a fixed pie. So we shall be brief.
The fixed point equation (in terms of the p.d.f.\ of the completion time distribution) for a freshly started game is
\[f_\mu(t)=\frac{u(t,0;\mu,c)}{u(0,x_0;\mu,c)}f_{\tau^\circ_{x_0/\sigma}}(t), \quad t\in[0,T].\]
Let $y(r):=F_{\tau^\circ_{x_0/\sigma}}(T^\mu_{r})$. As before, it can be shown that $\exp\left(\frac{H(r, F_\mu(T))}{2c\sigma^2}\right)y'(r)$ is independent of $r$ on $[0, F_\mu(T)]$, from which we obtain \eqref{eq:NEquantileB}.
Setting $r=F_\mu(T)$ in \eqref{eq:NEquantileB} leads to an equation for $F_\mu(T)$, namely \eqref{eq:NErateB}. The existence of a solution follows from the fact that
\begin{equation}\label{eq:phiB}
\phi(\beta):=\frac{1}{1-\beta}\int_0^\beta  \exp\left(\frac{R_\infty(\beta)-H(z,\beta)}{2c\sigma^2}\right)dz
\end{equation}
is a continuous function on $[0,1)$ satisfying $\phi(0)=0$ and $\lim_{\beta\rightarrow 1}\phi(\beta)=\infty$. Each solution of \eqref{eq:NErateB} yields an equilibrium completion time distribution $\mu$. The associated game value $V(\mu)=v(0,x_0;\mu,c)=2c\sigma^2 \ln u(0,x_0;\mu,c)$ follows from straightforward computation as before. The derivation of the expected total effort is the same as that in Proposition~\ref{prop:expected-effort}.
\end{proof}

Note that we do not necessarily have uniqueness under conditions of Theorem~\ref{thm:explicit_solnB}.  Equation \eqref{eq:NErateB} may have multiple solutions, leading to multiple equilibria.
For example, when $H(r,\beta)=R_\infty(\beta)+\beta$, one can check that equation \eqref{eq:NErateB} may have one, two or three solutions, depending on the value of $T, x_0, c, \sigma$ (see Figure~\ref{fig:unfixed-pie}).
Among the equilibria, the dominant one (i.e., the one with the highest game value) corresponds to the largest solution of \eqref{eq:NErateB}.
In this example, we see that by limiting the size of the projects or imposing  a long enough  deadline, the ``bad" equilibria can be avoided.

\begin{figure}[h]
\centering
\includegraphics[height=5.5cm]{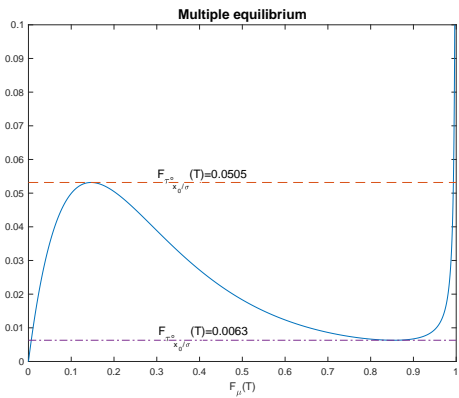}
\caption{The blue curve plots the right hand side of \eqref{eq:NErateB} as a function of $F_\mu(T)$ with $c=1$, $\sigma=0.25$ and $H(r,\beta)=R_\infty(\beta)+\beta$. The two horizontal lines plot the left hand side of \eqref{eq:NErateB} in the two critical cases. We see that there is a unique equilibrium when $F_{\tau^\circ_{x_0/\sigma}}<0.0063$ or $F_{\tau^\circ_{x_0/\sigma}}>0.0505$,  two equilibria when $F_{\tau^\circ_{x_0/\sigma}}=0.0063$ or $0.0505$, and three equilibria when $0.0063<F_{\tau^\circ_{x_0/\sigma}}<0.0505$. }
\label{fig:unfixed-pie}
\end{figure}

When reward $R(t,r,\beta)$ is independent of rank  $r$ and increasing in completion rate $\beta$, we have a contribution game where the interaction is not through the aggregate effort towards a single project as in, for example,  \cite{Georgiadis15}, but through the completion rate of many parallel projects. By gradually injecting rank dependence into the game, one can analyze the effect of competition on the completion rate and the game value.

Consider, for example,
\begin{align*}
R(t,r,\beta)&=\Pi(\beta) \left[\gamma+1_{\{t\le T\}}(1-\gamma) H_\eps (r)\right]
\end{align*}
where $\Pi(\beta):=K(1+\beta)$ represents the size of the pie, $\gamma\in[0,1]$ is the fraction of the pie that are used to reward participation, and $H_\epsilon(r)=1+\epsilon(1-2r)$ specifies how the remaining $(1-\gamma)$ fraction of the pie is shared among players who finish by the deadline and are ranked. A larger $\epsilon$ corresponds to a larger degree of competition (or inequality) among ranked players. When $\epsilon=0$, we have a pure contribution game. 
%Figure~\ref{fig:contribution-competition} plots the equilibrium 
%%\footnote{Numerics indicate that there is a unique equilibrium for the parameters we choose.} 
%completion rate and game value against $\eps$ in two scenarios: (i) $\gamma=0.5$. In this case, players who do not manage to finish by the deadline still get a share of the pie; that is,  free riding is still rewarded. (ii) $\gamma=0$. In this case, only those who finish receive rewards, and there is no free riding.
%We see that in both cases, the degree of inequality in the rewards has a positive effect on the equilibrium completion rate and the game value when the pie (that is, $K$) is small, an adverse effect when the pie is large, and a mixed effect when the size of the pie is moderate. The intuition is the same as  in the case of the fixed pie:
%when the pie is large, the  demoralizing effect of competitiveness prevails. However, when the pie is small,
% a higher percentage of players gives up,  and   higher prize inequality makes the players who do not give up  exert higher effort,  as they,  competing within a smaller group,  are less discouraged by the prize inequality.
% %In other words, in the presence of “welfare”, increased inequality in rewards   may be beneficial
Figure~\ref{fig:contribution-competition} plots the equilibrium completion rate and game value against $\epsilon$ for $\gamma=0.5$. We see that the degree of inequality in the rewards has a positive effect on the equilibrium completion rate and the game value when the pie (that is, $K$) is small, an adverse effect when the pie is large, and a mixed effect when the size of the pie is moderate. The intuition is the same as  in the case of the fixed pie:
when the pie is large, the  demoralizing effect of competitiveness prevails. However, when the pie is small,
 a higher percentage of players gives up,  and   higher prize inequality makes the players who do not give up  exert higher effort,  as they,  competing within a smaller group,  are less discouraged by the prize inequality.
 %In other words, in the presence of “welfare”, increased inequality in rewards   may be beneficial

%We discuss the optimal reward design in greater generality in the next section, in the model with a fixed pie, where we will see that flat reward function up to some cutoff rank is optimal for maximizing the game value.
%This is in line with the results when the pie is fixed; if only the completion rate or game value is considered, then it is probably better to distribute the pie uniformly (among those who meet the deadline).
\begin{figure}[h]
\centering
\includegraphics[height=5.5cm]{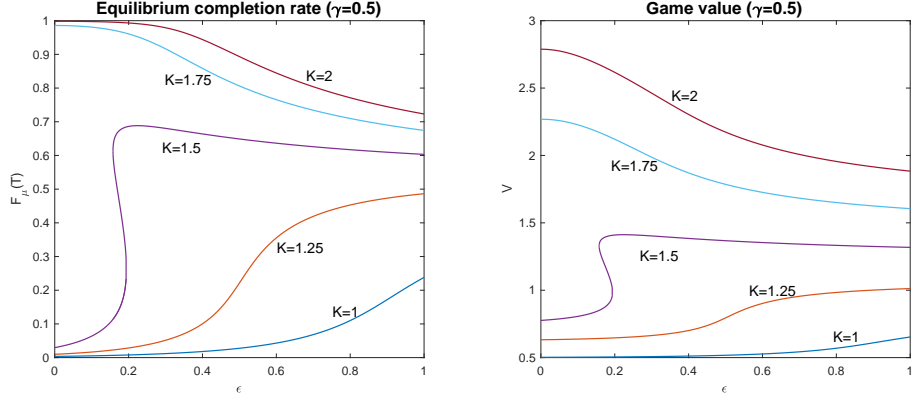}
%\includegraphics[height=5.5cm]{contribution_competitionB.eps}
%\caption{$F_\mu(T)$ and $V$ against $\eps$. Parameters are set to $x_0=c=T=1$ and $\sigma=0.25$. When $\gamma=0.5$, $K=1.5$ or $\gamma=0$, $K=0.8$, there is an interval of $\eps$ where the mapping $\epsilon\mapsto F_\mu(T)$ is multi-valued; meaning the game has multiple equilibria.}
\caption{$F_\mu(T)$ and $V$ against $\epsilon$. Parameters are set to $x_0=c=T=1$, $\sigma=0.25$ and $\gamma=0.5$. When $K=1.5$, there is an interval of $\epsilon$ where the mapping $\epsilon\mapsto F_\mu(T)$ is multi-valued; meaning the game has multiple equilibria.}
\label{fig:contribution-competition}
\end{figure}

%\YZ{
%\begin{remark}
%When the reward pie increases with the completion rate, the effect of the cost parameter $c$ on the game value is twofold. On one hand, small cost leads to the rat race effect as before -- the players apply too much effort. On the other hand,  increased effort also means a larger completion rate (see \eqref{eq:expected-effort}), which then means a larger reward pie. In most of our examples with non-zero rank-dependence, the rat race effect dominates the growing pie effect for $c\approx 0$. Hence, similar to the fixed-pie case, the game value tends to increase with $c$ for small values of $c$.
%%Once $c$ exceeds a certain threshold, ``finishing by the deadline" becomes the %major impact, which drives down the game value.
%\end{remark}
%}

%In numerical examples (figures not shown) we find that the degree of inequality in the rewards has a positive effect on the equilibrium completion rate and the game value when the pie (that is, $K$) is small, an adverse effect when the pie is large, and a mixed effect when the size of the pie is moderate. The intuition is the same as  in the case of the fixed pie.

\section{General case: existence, uniqueness, stability and $\epsilon$-Nash equilibrium}\label{sec:general_theory}

In this section, we work with general reward functions $R\in\mathcal{R}$, and no longer assume players are homogeneous. We introduce heterogeneity by assuming that a tournament is characterized by the initial condition $(t_0, \pi, m)$, where $t_0<T$ is the starting time, $\pi$ is the proportion of players that have already finished by time $t_0$, and $m\in\mathcal{P}(\bbR_{++}\times [\underline{c},\overline{c}])\subseteq\bbR^2_{++}$  is the joint distribution of the time-$t_0$ location, denoted by $\xi$, and the  cost of effort $c$ of the  population still playing.\footnote{Inhomogeneity in $\sigma\in[\underline{\sigma},\overline{\sigma}]$ could be easily incorporated as well, by taking $m$ to be the joint distribution of $(\xi,c,\sigma)$.} We assume that we are in the non-degenerate case $\pi<1$.

\subsection{Existence of a fixed point}\label{subsec:existence}

%We do not assume homogeneous players for our results on the existence  of a Nash equilibrium. Instead, we introduce heterogeneity by assuming that a tournament is characterized by the initial condition $(t_0, \pi, m)$, where $t_0<T$ is the starting time, $\pi$ is the proportion of players that have already finished by time $t_0$, and $m\in\mathcal{P}(\bbR_{++}\times [\underline{c},\overline{c}])\subseteq\bbR^2_{++}$  is the joint distribution of the time $t_0$ location, denoted by $\xi$, and the  cost of effort $c$ of the  population still playing. \footnote{Inhomogeneity in $\sigma\in[\underline{\sigma},\overline{\sigma}]$ could be easily incorporated as well, by taking $m$ to be the joint distribution of $(\xi,c,\sigma)$.} We assume that we are in the non-degenerate case $\pi<1$.

Given an initial condition $(t_0, \pi, m)$, for a representative player
we introduce a binary random variable $\theta\in\{0,1\}$ with $P(\theta=1)=\pi$ and $P(\theta=0)=1-\pi$. We interpret $\theta$ as the player's game completion status at time $t_0$; $\theta=1$ means game is completed and $\theta=0$ means game is in progress.
%corresponds to an incomplete game. 
If $\theta=0$, we further pick a time-$t_0$ location $\xi$ and cost of effort coefficient $c$ randomly from the joint distribution $m$. The initial randomizations $(\theta,\xi,c)$ are assumed to be independent of the Brownian motions driving the state process. %, and they are independent across players.

For a given starting time $t_0$, fix a $[t_0,\infty)$-supported completion time distribution $\mu$ of the population with $\mu(\{t_0\})=\pi$, i.e., $\mu$ is a modification of the completion time distribution by moving all the mass before time $t_0$ to time $t_0$. Since the probability for finishing exactly at time $t_0$ is zero, such a modification does not affect the player's optimization problem.

 The existence proof will be based on a fixed-point argument.
For that purpose,
we now define the best response mapping $\Phi^{t_0,\pi,m}$ which maps $\mu$ to the distribution $\mathcal{L}(\tau_\mu)$ of the optimal completion time $\tau_\mu$. 
{Assume for the moment that $T<\infty$.}
If $\theta=1$, we set $\tau_\mu:=t_0$. If $\theta=0$, we solve the player's optimization problem with initial randomization $(\xi,c)$ and using payoff function $R_\mu(t)=R(t,F_\mu(t))$. We set $\tau_\mu:=\inf\{t\in [t_0,T]: X^{\mu}_t=0\}\wedge \Delta$,\footnote{We use the convention that $\inf \emptyset =\infty$.} where $\Delta$ is a fixed number in $(T,\infty)$ corresponding to incompletion, and
\[dX^{\mu}_t=-a^*(t,X^\mu_t;\mu,c)dt+\sigma dB_t, \quad X^\mu_{t_0}=\xi.\]

By Proposition~\ref{prop:taupdf}, we know that for $t\in (t_0,T]$, the density of the optimal completion time is given by
\begin{equation*}
\bbP(\tau_\mu\in dt|\theta=1,\xi,c)=\frac{u(t,0;\mu,c)}{u(t_0,\xi;\mu, c)}f_{\tau^\circ_{\xi/\sigma}}(t-t_0).
\end{equation*}
It follows that
\begin{equation}\label{taupdf1}
\bbP(\tau_\mu=t_0)=\pi, \quad \bbP(\tau_\mu=\Delta)=1-\bbP(\tau_\mu\le T),
\end{equation}
\begin{equation}\label{taucdf}
\bbP(\tau_\mu\le t)
=\pi+(1-\pi)E\left[\int_{t_0}^t \frac{u(s,0;\mu,c)}{u(t_0,\xi;\mu,c)}f_{\tau^\circ_{\xi/\sigma}}(s-t_0)ds\right], \ t\in [t_0,T].
\end{equation}
In terms of the p.d.f., we have
\begin{equation}\label{taupdf2}
\bbP(\tau_\mu\in  dt)
=(1-\pi)E\left[\frac{u(t,0;\mu,c)}{u(t_0,\xi;\mu, c)}f_{\tau^\circ_{\xi/\sigma}}(t-t_0)\right], \ t\in(t_0,T].
\end{equation}
Let $\bbT:=[t_0,T]\cup\{\Delta\}$. Our goal is to find a fixed point of
$\Phi^{t_0,\pi,m}: \mu \mapsto \mathcal{L}(\tau_\mu)$
in the space $\mathcal{P}(\bbT)$ of probability measures on  $\bbT$,  which is compact in the topology of weak convergence. Since $\mathbb{T}$ is compact, weak convergence on $\mathcal{P}(\mathbb{T})$ can be metrized by the 1-Wasserstein metric (see \cite[Corollary 6.13]{OptimalTransport2009}):
\begin{align*}
W_1(\mu, \mu')&:=\inf\left\{\int_{\bbT^2} |x-y| d\pi(x,y): \pi\in \mathcal{P}(\bbT^2) \text{ with marginals } \mu \text{ and } \mu'\right\}.
%\\&=\sup \left\{\int_{\mathbb{R}_+}\psi d\mu-\int_{\mathbb{R}_+}\psi d\mu': \psi\in Lip_1(\mathbb{R}_+)\right\},
\end{align*}
%where the second equality holds by Kantorovich-Rubinstein duality. \footnote{$Lip_1(\mathbb{R}_+)$ denotes the space of Lipschitz continuous functions on $\mathbb{\mathbb{R}_+}$ whose Lipschitz constant is bounded by one.}
%It follows that $(\mathcal{P}(\bbT), W_1)$ is a compact Hausdorff metric space.
When $T=\infty$, we can define $\Phi^{t_0, \pi, m}$ in a similar fashion with $\Delta:=\infty$. In this case, additional care is need to ensure the compactness of $\mathcal{P}([t_0, \infty])$; see the proof of Theorem~\ref{thm:existence} for details.

The main ingredient of fixed point theorems is the continuity of the best response mapping which, in view of \eqref{taucdf}, amounts to the continuity of $u(t,x; \mu,c)$ with respect to $\mu$. We can guarantee this by restricting ourselves to reward functions $R\in\mathcal{R}$ that are continuous in the rank variable, or more generally, of the form
$R(t,r)=G(t,H(r))$
for some functions $G:\bbR_+\times\bbR\mapsto \bbR$ and $H:[0,1]\mapsto \bbR$ such that $y\mapsto G(t,y)$ is continuous for each $t$, and $H$ is monotone. In this way, the (potential)  discontinuity in rank is decoupled from time variable.
To fix notation, let
%\[\mathcal{R}_C:=\{R\in\mathcal{R}: r\mapsto R(t,r) \text{ is continuous for each $t$}\},\]
\[\mathcal{R}_D:=\{R\in\mathcal{R}: R(t,r)=G(t,H(r)), y\mapsto G(t,y) \text{ is continuous for each $t$}, H \text{ is monotone}\},\]
where
%$C$ stands for ``continuous" and
$D$ stands for ``decoupled" or ``decomposable".

There is another case that is sufficient for existence of a fixed point. Consider piecewise constant $H$ of the form
\begin{equation}\label{eq:H}
H(r)=\sum_{k=1}^{d} R_k 1_{[r_{k-1},r_k)}(r)+R_{d+1} 1_{[r_d,1]}, \ 0=r_0<r_1<\cdots<r_{d}<1,
\end{equation}
and
 define
\[\mathcal{R}_{S}:=\{R\in\mathcal{R}: R(t,r)=G(t,H(r)), H\text{ is of the form \eqref{eq:H}}\},\]
where $S$ stands for ``step function". %Note that the example in Remark~\ref{rmk:Rexample} also belongs to $\mathcal{R}_{S}$.
%\begin{remark}\label{rmk:Rexample}
%An important example of $R\in \mathcal{R}_D$ we have in mind is the one with
%\[G(t,y)=1_{\{t\le T\}}y+1_{\{t>T\}}R_\infty, \quad H(r)=\sum_{k=1}^{d} R_k 1_{[r_{k-1},r_k)}(r)+R_{d+1} 1_{[r_d,1]},\]
%where $0=r_0<r_1<\cdots<r_{d}<1$ is a finite partition of the rank space, and $R_1\ge R_2\ge \cdots R_{d+1}\ge R_\infty$.
%That is, the reward is piecewise constant in the rank variable. \end{remark}
The nice thing about $R\in\mathcal{R}_S$ is that $R_\mu$ depends on $\mu$ only through its quantiles $(T^\mu_1, ..., T^\mu_{d})$; the shape of $\mu$ between each neighboring $T^\mu_k$'s becomes irrelevant. In other words, each $\mu$ can be encoded to $d$ numbers. As a consequence, we have a fixed point problem in a $d$-dimensional space instead of the infinite dimensional space of measures.

We now state our general existence theorem.

\begin{thm}\label{thm:existence}
Assume $R\in\mathcal{R}_D\cup \mathcal{R}_{S}$. Then $\Phi^{t_0,\pi,m}$ has a fixed point. Thus, an equilibrium exists.
\end{thm}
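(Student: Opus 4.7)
The plan is to apply Schauder's fixed point theorem for $R \in \mathcal{R}_D$ and Brouwer's fixed point theorem for $R \in \mathcal{R}_S$ to the best-response map $\Phi^{t_0,\pi,m}$. The ambient space $\mathcal{P}(\bbT)$ is compact convex in the weak topology (equivalently, in $W_1$) when $T<\infty$; for $T=\infty$ I would use the one-point compactification $\bbT=[t_0,\infty]\cup\{\Delta\}$, noting that $\Phi(\mu)$ places no mass at $\infty$ so any fixed point in the compactified space is automatically a fixed point in the original space.

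For $R\in\mathcal{R}_D$, I would first establish that $\Phi$ is continuous in the weak topology. From the explicit formulas \eqref{taupdf1}--\eqref{taupdf2}, it suffices to show that $\mu\mapsto u(t,x;\mu,c)$ is continuous pointwise in $(t,x,c)$ and that the resulting densities converge. Since $u$ is an expectation of a bounded exponential of $R_\mu$ against the (fixed) Brownian first-passage-time law, by dominated convergence the task reduces to showing $R_{\mu_n}(s)\to R_\mu(s)$ for Lebesgue-a.e.\ $s$ whenever $\mu_n\to\mu$ weakly. Using the decoupled structure $R(s,r)=G(s,H(r))$ with $G$ continuous in the second argument, and the classical fact that $F_{\mu_n}(s)\to F_\mu(s)$ at continuity points of $F_\mu$, pointwise convergence holds at every $s$ where both $F_\mu$ is continuous and $F_\mu(s)$ is a continuity point of $H$. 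Weak convergence of $\Phi(\mu_n)$ to $\Phi(\mu)$ then follows from the explicit density formula, and Schauder gives a fixed point.

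The main obstacle here is the case where $H$ is monotone but discontinuous: if $\mu$ has a flat region on which $F_\mu$ takes a discontinuity value $r_k$ of $H$, the set of bad $s$ can have positive Lebesgue measure and DCT fails. My plan to overcome this is to restrict the fixed-point search to the image $\Phi(\mathcal{P}(\bbT))$, or rather its closed convex hull. By \eqref{taupdf2} combined with the two-sided bounds \eqref{u-bdd} on $u$, every measure in $\Phi(\mathcal{P}(\bbT))$ admits a strictly positive density on $(t_0,T]$; therefore its c.d.f.\ is strictly increasing and $\{s:F_\mu(s)=r_k\}$ is at most a singleton for each of the countably many discontinuities $r_k$ of $H$. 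On this invariant subfamily the bad set is Lebesgue-negligible and DCT delivers the required continuity. A small additional argument is needed to show closure under weak limits of this subfamily (or to apply Schauder--Tychonoff directly on the closed convex hull of the image, which is compact since $\mathcal{P}(\bbT)$ is).

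For $R\in\mathcal{R}_S$, I would reduce the fixed-point problem to a finite-dimensional one. Since $H$ is piecewise constant with breakpoints $r_1,\ldots,r_d$, the function $R_\mu$ depends on $\mu$ only through the quantile vector $\bq(\mu):=(T^\mu_{r_1},\ldots,T^\mu_{r_d})\in\mathbf{T}:=\{\bq\in[t_0,\Delta]^d: t_0\le q_1\le\cdots\le q_d\le\Delta\}$. Define $\Psi:\mathbf{T}\to\mathbf{T}$ by $\Psi(\bq(\mu)):=\bq(\Phi(\mu))$; this is well-defined because the right-hand side depends on $\mu$ only via $R_\mu$ and hence via $\bq(\mu)$. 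The set $\mathbf{T}$ is a compact convex polytope in $\bbR^d$. Continuity of $\Psi$ follows because $\bq_n\to \bq$ implies $R_{\mu_n}(s)\to R_\mu(s)$ at all but finitely many $s$ (the cutoff points), giving $u(\cdot;\mu_n,c)\to u(\cdot;\mu,c)$ by DCT, and then convergence of quantiles from the explicit density formula and the fact that the limiting density of $\Phi(\mu)$ is strictly positive and continuous on $(t_0,T]$. Brouwer's theorem on $\mathbf{T}$ then yields a fixed point $\bq^*$, which is encoded by a unique $\mu^*$ in the image of $\Phi$ and satisfies $\Phi(\mu^*)=\mu^*$.
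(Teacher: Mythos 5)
Your proposal is correct and follows essentially the same route as the paper: Schauder on a compact convex invariant set for $R\in\mathcal{R}_D$ (with the key observation that image measures have densities bounded below by a fixed positive function, via \eqref{taupdf2} and \eqref{u-bdd}, so that discontinuities of $H$ are hit by $F_\mu$ at only countably many times), Brouwer on the quantile polytope for $R\in\mathcal{R}_S$, and a compactification of $[t_0,\infty]$ for $T=\infty$. The paper simply makes your "closed convex hull of the image" explicit as the set $\mathcal{D}_{\underline{f}}=\{\mu:\mu([a,b])\ge\int_a^b\underline{f}\}$, which is manifestly convex and weakly closed, so the "small additional argument" you flag is exactly what is supplied there.
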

\begin{proof}
 Assume first  $T<\infty$. We will look for fixed points in the compact (Hausdorff) metric space $(\mathcal{P}(\bbT),W_1)$.

 Assume $R\in\mathcal{R}_D$.
In order for $F_\mu$ not to hit the (potential) discontinuity of $H$ on a non-negligible subset of $[t_0,T]$, we need to make sure that it is not flat there. Observe from \eqref{taupdf2} that any fixed point $\mu$ of $\Phi^{t_0,\pi,m}$, if it exists, must have a strictly increasing c.d.f.\ on $[t_0,T]$. This motivates us to define the following non-empty, convex subset of $\mathcal{P}(\bbT)$:
\begin{equation}
\mathcal{D}_{\underline{f}}:=\left\{\mu\in\mathcal{P}(\bbT): \mu([a,b])\ge \int_a^b \underline{f}(t)dt \quad \forall a,b\in [t_0,T]\right\},
\end{equation}
where
\begin{equation}\label{eq:flbdd}
\underline{f}(t):=(1-\pi)\exp\left(\frac{R_\infty-R(0,0)}{2\overline{c}\sigma^2}\right)Ef_{\tau^\circ_{\xi/\sigma}}(t-t_0)>0.
\end{equation}
We claim then, that

(i) $\Phi^{t_0,\pi,m}(\mathcal{P}(\bbT))\subseteq \mathcal{D}_{\underline{f}}$.

 (ii) $\mathcal{D}_{\underline{f}}$ is closed under weak convergence.

(iii) Any $\mu\in\mathcal{D}_{\underline{f}}$ has strictly increasing c.d.f.\ on $[t_0,T]$.

 (i) follows from \eqref{taupdf2} and \eqref{u-bdd}. (ii) holds because $\mu_n$ converges to $\mu$ weakly if and only if $\limsup_n \mu_n(C)\le \mu(C)$ for any closed sets $C\subseteq\bbT$.
(iii) is obvious.

Note that weak convergence is equivalent to $W_1$ convergence on $\mathcal{P}(\bbT)$, so that (ii) implies that $\mathcal{D}_{\underline{f}}$  is a closed subset of a compact space, hence also compact. To apply Schauder's fixed point theorem, it remains to show that $\Phi^{t_0,\pi,m}$ is continuous on $\mathcal{D}_{\underline{f}}$.

Let $\{\mu_k\}\subseteq \mathcal{D}_{\underline{f}}$ be such that $W_1(\mu_k,\mu)\rightarrow 0$. We wish to show
\[W_1(\Phi^{t_0,\pi,m}(\mu_k), \Phi^{t,\pi_0,m}(\mu))=W_1(\mathcal{L}(\tau_{\mu_k}),\mathcal{L}(\tau_{\mu}))\rightarrow 0,\]
or, equivalently, $\mathcal{L}(\tau_{\mu_k})$ converges to $\mathcal{L}(\tau_{\mu})$ weakly. It suffices to show $F_{\tau_{\mu_k}}$ converges to $F_{\tau_\mu}$ pointwise on $[t_0,T]$.
Recall \eqref{taucdf}:
\[F_{\tau_{\mu_k}}(t)=\pi+(1-\pi)E\left[\int_{t_0}^t\frac{u(s,0;\mu_k,c)}{u(t_0,\xi;\mu_k,c)}f_{\tau^\circ_{\sigma/\xi}}(s-t_0)ds\right], \ t\in [t_0,T].\]
Since $u$ is bounded by a constant independent of $\mu_k$, to show the expected values converge, we only need to show that for every $x\ge 0$ and realization of $c$, $u(t,x;\mu_k,c)$ converges to $u(t,x;\mu,c)$ for a.e.\ $t\in(t_0,T]$. Also, recall that
\[u(t,x;\mu_k,c)=E\left[\exp\left(\frac{R_{\mu_k}(t+\frac{x^2}{\sigma^2}\tau^\circ_1)}{2c\sigma^2}\right)\right],\]
where $R_{\mu_k}(t)=R(t,F_{\mu_k}(t))$. Since $W_1(\mu_k,\mu)\rightarrow 0$, $\mu_k$ also converges to $\mu$ weakly, which further implies $F_{\mu_k}(t)$ converges to $F_{\mu}(t)$ at every point of continuity of $F_{\mu}$.
Since $F_\mu(t)$ and $H(r)$ are monotone, they have at most countably many points of discontinuity, denoted by $t_1, t_2, \ldots$ and $r_1, r_2, \ldots,$ respectively. Now, since $\mu\in\mathcal{D}_{\underline{f}}$, $F_\mu$ is strictly increasing on $[t_0,T]$ by item (iii) above. It follows that each $r_{i}$ can only be attained by $F_\mu(t)$ for at most one $t\in [t_0,T]$, denoted by $\tilde t_{i}$. For any $t$ not belonging to the countable set $\{t_i, \tilde{t}_i\}_{ i=1,2,\ldots}$, we have that $F_{\mu_k}(t)$ converges to $F_\mu(t)$, that $H(F_{\mu_k}(t))$ converges to $H(F_{\mu}(t))$ and that $R_{\mu_k}(t)=G(t,H(F_{\mu_k}(t)))$ converges to $R_{\mu}(t)=G(t,H(F_{\mu}(t)))$. Bounded convergence theorem then implies that $u(t,x;\mu_k,c)$ converges to $u(t,x;\mu,c)$ pointwise.

(b)  Assume $R\in\mathcal{R}_S$.
Let $T^\mu_k=T^\mu_{r_k}$ be the $(r_k)$-quantile of $\mu$ for $k=1, \ldots, d$, and set $T^
\mu_0=0$, $T^\mu_{d+1}=\Delta$.
Exploiting the piecewise constant structure of $H$, we have
\begin{equation}\label{eq:g}
R_{\mu}(t)=\sum_{k=1}^{d+1} G(t,R_k)1_{[T^\mu_{k-1},T^\mu_k)}(t), \quad t\in [t_0,T].
\end{equation}
%We see that $R_\mu$ depends on $\mu$ only through its quantiles $(T^\mu_1, ..., T^\mu_{d})$; the shape of $\mu$ between each neighboring $T^\mu_k$'s becomes irrelevant. In other words, each $\mu$ can be encoded to $d$ numbers. As a consequence, we have a fixed point problem in
%\[\mathcal{Q}:=\{(T_1,\ldots, T_d)\in \widetilde\bbT^d: T_1\le T_2\le \cdots \le T_d\}\]
%instead of the infinite dimensional space of measures, where $\widetilde \bbT:=[t_0,\Delta]$ is the convex hull of $\bbT$.
Since $R_\mu$ depends on $\mu$ only through its quantiles $(T^\mu_1, ..., T^\mu_{d})$, as pointed out earlier, the fixed point problem can be reduced to a finite dimensional one in the space
\[\mathcal{Q}:=\{(T_1,\ldots, T_d)\in \widetilde\bbT^d: T_1\le T_2\le \cdots \le T_d\},\]
where $\widetilde \bbT:=[t_0,\Delta]$ is the convex hull of $\bbT$.

We will use the notation $R_{\bq}(t)$, $u(t,x;\bq,c)$ and $\tau_\bq$ instead of $R_{\mu}(t)$, $u(t,x;\mu,c)$ and $\tau_\mu$, where $\bq=(T_1, \ldots, T_d)$ is a vector of quantiles. The best response mapping is reformulated as
\[\Theta: \bq\mapsto  (T^{\mathcal{L}(\tau_{{\bq}})}_{1}, \ldots, T^{\mathcal{L}(\tau_{{\bq}})}_{d}).\]
Any fixed point $\bq$ of $\Theta$ induces a fixed point $\mathcal{L}(\tau_{\bq})$ of $\Phi^{t_0,\pi,m}$. Note that $\mathcal{Q}$ is a convex, compact set which is mapped into itself under $\Theta$.
Once we show $\Theta$ is continuous on $\mathcal{Q}$, we can use Brouwer's fixed point theorem to conclude that $\Theta$ has a fixed point. Continuity is proved as follows:

Let
$\bq_n=(T^n_1, \ldots, T^n_d)\rightarrow \bq=(T_1, \ldots, T_d) $ in $\mathcal{Q}$. It is easy to see from \eqref{eq:g} that $R_{\bq_n}(t)$ converges to $R_{\bq}(t)$ if $t\notin\{T_1, \ldots, T_d\}$.
Since $\tau^\circ_{x/\sigma}$ is non-atomic, bounded convergence theorem implies for any $(t,x)\in [0, T]\times\bbR_{++}$ and any (positive) realization of $c$,
\begin{equation*}
u(t,x;\bq_n,c)\rightarrow u(t,x;\bq,c)  \ \text{ as } n\rightarrow \infty.
\end{equation*}
Once we have the pointwise convergence of $u$, we can then use \eqref{taupdf2} to obtain weak convergence of
$\mathcal{L}(\tau_{\bq_n})$ to $\mathcal{L}(\tau_\bq)$. To show $T^{\mathcal{L}(\tau_{\bq_n})}_k\rightarrow T^{\mathcal{L}(\tau_{\bq})}_k$, we consider three cases of $k\in\{1,\ldots, d\}$.

Case (i). If $r_k\le \pi$, then by our construction, $T^{\mathcal{L}(\tau_{\bq_n})}_k=T^{\mathcal{L}(\tau_{\bq})}_k=t_0$.

Case (ii). If $\pi<r_k\le F_{\tau_\bq}(T)$, then we observe from \eqref{taupdf2} that $\tau_{\bq}$ has strictly positive density in $(t_0,T)$. So the quantile function of $\mathcal{L}(\tau_\bq)$ is continuous in $(\pi,F_{\tau_\bq}(T)]$. Weak convergence of $\mathcal{L}(\tau_{\bq_n})$ to $\mathcal{L}(\tau_\bq)$ then implies $T^{\mathcal{L}(\tau_{\bq_n})}_k\rightarrow T^{\mathcal{L}(\tau_{\bq})}_k$.

Case (iii). If $r_k>F_{\tau_\bq}(T)$, then $T^{\mathcal{L}(\tau_{\bq})}_k=\Delta$. By weak convergence of $\mathcal{L}(\tau_{\bq_n})$ to $\mathcal{L}(\tau_\bq)$ and the continuity of $F_{\tau_\bq}$ at time $T$, we have that $F_{\tau_{\bq_n}}(T)\rightarrow F_{\tau_\bq}(T)$. This implies $r_k>F_{\tau_{\bq_n}}(T)$ and hence $T^{\mathcal{L}(\tau_{\bq_n})}_k=\Delta$ for $n$ sufficiently large.

Finally, we consider the case $T=\infty$.
Assume first $R\in \mathcal{R}_D$.
In this case, it is not clear whether $\tau_\mu$ has finite moments. In the worst case when the reward is constant after a certain rank, $\tau_\mu$ inherits the tail property of a Brownian motion first passage time, and thus, has infinite mean. Hence, we cannot directly work with the space $[t_0,\infty)$ using its standard topology.
Instead, we equip $[t_0,\infty]$ with the order topology under which it is homeomorphic to $[0,1]$.  For example, one can take the homeomorphism to be $\bar F:=E[F_{t_0+\tau^\circ_{\xi/\sigma}}]$ with the convention that any c.d.f.\ maps $\infty$ to 1. With this choice of topology, $[t_0,\infty]$ becomes compact and metrizable.
It follows that the space $\mathcal{P}([t_0,\infty])$ is also compact for the topology of weak convergence
%(see e.g.\ \cite[Theorem 15.11]{InfDimAnalysis})
which can be assigned the corresponding 1-Wasserstein metric: %(see \cite[Corollary 6.13]{OptimalTransport2009}):
\begin{align*}
W_1(\mu, \mu')&:=\inf\left\{\int_{[t_0,\infty]^2} |\bar F(x)-\bar F(y)| d\pi(x,y): \pi\in \mathcal{P}([t_0,\infty]^2) \text{ with marginals } \mu \text{ and } \mu'\right\}.
\end{align*}

By Portmanteau theorem for Polish spaces, weak convergence of $\mu_k$ to $\mu$ in $\mathcal{P}([t_0,\infty])$ still implies the convergence of $F_{\mu_k}$ to $F_\mu$ at the continuity points of $F_\mu$, although the topology involved is slightly non-standard. Indeed, for any $t\in[t_0,\infty]$, since $[t_0,t)$ is open and $[t_0,t]$ is closed, we have $\liminf_k \mu_k([t_0,t))\ge \mu([t_0,t))$ and $\limsup_k \mu_k([t_0,t])\le \mu([t_0,t])$. If $F_\mu$ is continuous at $t$, then $\limsup_k F_{\mu_k}(t)\le F_{\mu}(t)=F_{\mu}(t-)\le \liminf_k F_{\mu_k}(t-)\le \liminf_k F_{\mu_k}(t)$. When $\mu\in\mathcal{P}([t_0,\infty))$, which holds for any measure in the range of $\Phi^{t_0,\pi,m}$, weak convergence of $\mu_n$ to $\mu$ in the usual sense implies weak convergence in $\mathcal{P}([t_0,\infty])$, since any bounded continuous function on $[t_0,\infty]$ (equipped with the order topology) is also bounded continuous on $[t_0,\infty)$ (equipped with the standard topology). This means that, to show $W_1(\Phi^{t_0,\pi,m}(\mu_k), \Phi^{t,\pi_0,m}(\mu))\rightarrow 0$, pointwise convergence of $F_{\mu_k}$ to $F_\mu$ on $[t_0,\infty)$ is still sufficient as before. Thus, the proof from the case $T<\infty$ remains valid.

For the case $R\in \mathcal{R}_S$, we similarly identify
$\mathcal{Q}=\{(T_1,\ldots, T_d)\in [t_0,\infty]^d: T_1\le T_2\le \cdots \le T_d\}$
with the convex, compact set
$\mathcal{Y}:=\{(y_1,\ldots, y_d)\in [0,1]^d: y_1\le y_2\le\cdots\le y_d\}$
under the homeomorphism $\mathbf{\bar F}:(T_1,\ldots, T_d)\mapsto (\bar F(T_1), \ldots, \bar F(T_d))$. Since the extended real line is not a vector space, Schauder's fixed point theorem does not directly apply to $\mathcal{Q}$. However, we can look for a fixed point of $\Xi:=\mathbf{\bar F}\circ\Theta\circ\mathbf{\bar F^{-1}}$ in $\mathcal{Y}$ instead, where $\Theta$ is the original best response mapping. Any fixed point $\by$ of $\Xi$ induces a fixed point $\bq=\mathbf{\bar F^{-1}}(\by)$ of $\Theta$. Since $\mathbf{\bar F}$ is a homeomorphism, $\Xi$ is continuous if and only if $\Theta$ is continuous. The proof then proceeds similarly as before.
\end{proof}

\subsection{Uniqueness of the fixed point}\label{subsec:uniqueness}

%We now show uniqueness of equilibrium under a technical condition.
%We remark that uniqueness need not hold when the reward function depends on the population completion rate, as we will see in a later section.

Denote by $\mathcal{AC}^T_{t_0}$ the set of measures $\mu\in \mathcal{P}(\bbR_+)$ with CDF $F_\mu$ that is absolutely continuous on $[t_0,T]\cap[t_0,\infty)$. The following monotonicity condition, which we will show is sufficient for uniqueness, is in the same spirit as \cite{LasryLions.07} (also see \cite[Section 3.2]{BZ16}).

\begin{assumption}\label{assumption-uniqueness}
For any $\mu, \mu'\in\mathcal{AC}^T_{t_0}$ such that $F_{\mu}(t_0)=F_{\mu'}(t_0)$, we have
\[\int_{t_0}^T (R_{\mu}-R_{\mu'})(t) d(\mu-\mu')(t)\leq 0.\]
\end{assumption}

\begin{prop}\label{prop:check-uniqueness}
Assumption~\ref{assumption-uniqueness} is satisfied if the function
\begin{equation*}
h(t,x,y):=\begin{cases}
\frac{R(t,x)-R(t,y)}{x-y} , & t\in [t_0,T]\cap[t_0,\infty), 0\le x\ne y\le 1\\
\frac{\partial}{\partial x} R(t,x) & t\in [t_0,T]\cap[t_0,\infty), 0\le x=y\le 1
\end{cases}
\end{equation*}
is well-defined, non-positive, increasing in $t,x,y$, and the functions $R(t,F_\mu(t))$, $h(t, F_\mu(t), F_{\mu'}(t))$ are absolutely continuous on $[t_0,T]\cap[t_0,\infty)$ for any $\mu, \mu\in \mathcal{AC}^T_{t_0}$.
In particular, Assumption~\ref{assumption-uniqueness} is satisfied for reward functions of the form
$R(t,r)=1_{\{t\le T\}}H(r)+1_{\{t>T\}}R_\infty,$
where $H$ is a convex, decreasing, $C^2[0,1]$ function, such that $H(r)\ge R_\infty$ for all $r$.
\end{prop}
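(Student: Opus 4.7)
The strategy is to factor the rank-reward differential through the divided difference $h$ and then integrate by parts. Writing $\Delta(t):=F_\mu(t)-F_{\mu'}(t)$, the definition of $h$ gives the pointwise identity
\[
(R_\mu-R_{\mu'})(t)=R(t,F_\mu(t))-R(t,F_{\mu'}(t))=h\bigl(t,F_\mu(t),F_{\mu'}(t)\bigr)\,\Delta(t),
\qquad t\in[t_0,T]\cap[t_0,\infty).
\]
Since $\mu,\mu'\in\mathcal{AC}^T_{t_0}$ and $F_\mu(t_0)=F_{\mu'}(t_0)$, $\Delta$ is absolutely continuous on $[t_0,T]\cap[t_0,\infty)$ with $\Delta(t_0)=0$, so $d(\mu-\mu')(t)=\Delta'(t)\,dt$ and $\Delta\Delta'=\tfrac12(\Delta^2)'$ almost everywhere.

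The plan is then to compute
\[
\int_{t_0}^T(R_\mu-R_{\mu'})(t)\,d(\mu-\mu')(t)
=\tfrac12\int_{t_0}^T h\bigl(t,F_\mu(t),F_{\mu'}(t)\bigr)\,d\!\left(\Delta(t)^2\right),
\]
and integrate by parts. Using the assumed absolute continuity of $t\mapsto h(t,F_\mu(t),F_{\mu'}(t))$ together with the absolute continuity of $\Delta^2$, integration by parts yields
\[
\tfrac12\Bigl[h(t,F_\mu,F_{\mu'})\,\Delta(t)^2\Bigr]_{t_0}^T
-\tfrac12\int_{t_0}^T\Delta(t)^2\,\frac{d}{dt}h\bigl(t,F_\mu(t),F_{\mu'}(t)\bigr)\,dt.
\]
The boundary term at $t_0$ vanishes because $\Delta(t_0)=0$; at the upper endpoint one uses $h\le 0$ (when $T<\infty$) or $\Delta(t)\to 0$ as $t\to\infty$ combined with boundedness of $h$ (when $T=\infty$) to conclude that this term is non-positive. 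For the remaining integral, since $h$ is increasing in all three arguments and $F_\mu,F_{\mu'}$ are non-decreasing, the composition $t\mapsto h(t,F_\mu(t),F_{\mu'}(t))$ is non-decreasing, so its a.e.\ derivative is $\ge 0$; multiplied by $\Delta^2\ge 0$ and integrated, this contributes non-negatively, making $-\tfrac12\int\Delta^2\,dh\le 0$. Summing both pieces gives the desired inequality.

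For the ``in particular'' statement, on $[t_0,T]\cap[t_0,\infty)$ the reward reduces to $R(t,x)=H(x)$, so $h(t,x,y)$ is independent of $t$ (hence trivially increasing in $t$) and equals the divided difference of $H$, with diagonal value $H'(x)$. The monotonicity of $H$ gives $h\le 0$; convexity of $H$ is precisely the statement that divided differences of $H$ are monotone in each argument, which yields the required monotonicity of $h$ in $x$ and $y$. Finally, $H\in C^2[0,1]$ makes $h$ of class $C^1$ on $[0,1]^2$, so composition with the absolutely continuous functions $F_\mu,F_{\mu'}$ preserves absolute continuity, verifying the regularity hypotheses. The main (minor) obstacle is merely the care needed at the upper endpoint when $T=\infty$, but boundedness of $H'$ on $[0,1]$ and $\Delta(t)\to 0$ handle it immediately.
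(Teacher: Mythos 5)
Your proof is correct and follows essentially the same route as the paper: both factor $(R_\mu-R_{\mu'})(t)=h(t,F_\mu(t),F_{\mu'}(t))\,(F_\mu-F_{\mu'})(t)$, integrate by parts, kill the boundary terms via $F_\mu(t_0)=F_{\mu'}(t_0)$ and $h\le 0$, and use monotonicity of $t\mapsto h(t,F_\mu(t),F_{\mu'}(t))$ on the remaining integral; your $\tfrac12\,d(\Delta^2)$ bookkeeping is just a repackaging of the paper's step of recovering the original integral on the right-hand side and dividing by two. Your explicit verification of the ``in particular'' case (divided differences of a convex $C^2$ function) is a detail the paper leaves to the reader, and it is done correctly.
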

\begin{proof}
Using integration by parts for absolutely continuous functions, we have
 \begin{align*}
&\int_{t_0}^T (R_{\mu}-R_{\mu'})(t) d(\mu-\mu')(t)\\
&=(R_\mu(T)-R_{\mu'}(T))(F_\mu(T)-F_{\mu'}(T))-\int_{t_0}^T (F_{\mu}-F_{\mu'})(t) d(R_{\mu}-R_{\mu'})(t)\\
 &=(F_{\mu}(T)-F_{\mu'}(T))^2h(T, F_{\mu}(T),F_{\mu'}(T))-\int_{t_0}^T(F_{\mu}-F_{\mu'})(t) d\left[(F_{\mu}-F_{\mu'})(t)h(t,F_{\mu}(t),F_{\mu'}(t))\right]\\
 &\le -\int_{t_0}^T(F_{\mu}-F_{\mu'})(t) \left[(F_{\mu}-F_{\mu'})(t)dh(t,F_{\mu}(t),F_{\mu'}(t))+h(t,F_{\mu}(t),F_{\mu'}(t))d(F_{\mu}-F_{\mu'})(t)\right]\\
 &= -\int_{t_0}^T(F_{\mu}-F_{\mu'})^2(t) dh(t,F_{\mu}(t),F_{\mu'}(t)) -\int_{t_0}^T (R_{\mu}-R_{\mu'})(t) d(\mu-\mu')(t)
 \end{align*}
where we have used $F_\mu(t_0)=F_{\mu'}(t_0)$ and $h\leq 0$ for the boundary terms. Re-arranging terms and using the monotonicity of $h (t,F_\mu(t),F_{\mu'}(t))$, we get
 \begin{align*}
 &2\int_{t_0}^T (R_{\mu}-R_{\mu'})(t) d(\mu-\mu')(t)\le -\int_{t_0}^T(F_{\mu}-F_{\mu'})^2(t) dh(t,F_{\mu}(t),F_{\mu'}(t)) \leq 0.
 \end{align*}
\end{proof}

%We have the following uniqueness result.

 \begin{thm}\label{uniq}
Under Assumption~\ref{assumption-uniqueness}, $\Phi^{t_0,\pi,m}$ has at most one fixed point.
\end{thm}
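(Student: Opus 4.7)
The plan is to deploy the classical Lasry--Lions monotonicity argument, adapted to our mean field game. Suppose $\mu,\mu'\in\mathcal{P}(\bbT)$ are two fixed points of $\Phi^{t_0,\pi,m}$, and let $a^\mu, a^{\mu'}$ be the associated optimal feedback controls, with value functions $v(t_0,x;\mu,c), v(t_0,x;\mu',c)$ furnished by Proposition~\ref{prop:taupdf}. With the shorthand
\[J(a;\nu)(x,c) := E\left[R_\nu(\tau^a) - \int_{t_0}^{\tau^a\wedge T} c\, a_s^2\,ds \,\Big|\, X_{t_0}=x\right],\]
optimality of $a^\mu$ against $\mu$ and of $a^{\mu'}$ against $\mu'$ yields $v(t_0,x;\mu,c)\ge J(a^{\mu'};\mu)(x,c)$ and $v(t_0,x;\mu',c)\ge J(a^\mu;\mu')(x,c)$ for every $(x,c)$.

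I would next integrate both inequalities against the initial distribution $m$, using the fixed-point identities $\mathcal{L}(\tau^{a^\mu})=\mu$ and $\mathcal{L}(\tau^{a^{\mu'}})=\mu'$ to rewrite the expected rewards as integrals against $\mu$ and $\mu'$. Adding the two integrated inequalities, the cost terms cancel exactly and one is left with
\[\int_{t_0}^T (R_\mu - R_{\mu'})\,d(\mu-\mu') \ge 0.\]
Assumption~\ref{assumption-uniqueness} supplies the reverse inequality, so equality must hold; since both non-negative integrands $v(t_0,\cdot;\mu,\cdot)-J(a^{\mu'};\mu)(\cdot)$ and $v(t_0,\cdot;\mu',\cdot)-J(a^{\mu};\mu')(\cdot)$ in the summed inequality integrate to zero under $m$, each vanishes $m$-a.e. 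In particular, $a^\mu$ attains the single-player value against $\mu'$ for $m$-a.e.\ initial condition $(x,c)$.

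To convert this into $\mu=\mu'$, I would then exploit the strict concavity of the Hamiltonian in $a$: since the cost $ca^2$ has $c>0$, the pointwise maximizer $-v_x/(2c)$ in the HJB \eqref{HJB} is unique, so equality in the verification step of Proposition~\ref{prop:taupdf} forces $a^\mu_s = -v_x(s,X^{a^\mu}_s;\mu',c)/(2c) = a^{\mu'}(s, X^{a^\mu}_s; \mu', c)$ for a.e.\ $s\le \tau^{a^\mu}$ along the controlled trajectory. Because the SDE has constant, nondegenerate diffusion, the controlled paths visit a dense subset of the reachable region, and continuity of $u_x/u$ on $\bbR_+\times(0,\infty)$ promotes this pathwise identification to $a^\mu\equiv a^{\mu'}$ as feedback functions on the full reachable region. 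Plugging the common feedback into \eqref{SDE} with the same initial draw from $m$ and the same driving Brownian motion then yields $\mathcal{L}(\tau^{a^\mu})=\mathcal{L}(\tau^{a^{\mu'}})$, i.e., $\mu=\mu'$.

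The main obstacle I foresee is this last propagation step: moving from ``$a^\mu$ optimal against $\mu'$, $m$-a.e.\ in the initial condition'' to a pointwise identification of the two feedback functions on the whole reachable $(t,x)$-domain. The argument relies crucially on strict concavity of the Hamiltonian along the trajectory plus the nondegeneracy of the diffusion to populate enough points. Two minor loose ends are the potential singularity of the feedback near $x=0$ (handled by the localization used in step 4 of Proposition~\ref{prop:taupdf}) and the atom of $\mu$ at $t_0$ with mass $\pi$, which is harmless because $\mu$ and $\mu'$ share the same mass there by construction, so both $R_\mu-R_{\mu'}$ and $\mu-\mu'$ vanish on $\{t_0\}$ and all manipulations may be restricted to $(t_0,T]$ where Assumption~\ref{assumption-uniqueness} is formulated.
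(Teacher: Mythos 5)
Your proposal is correct and follows essentially the same Lasry--Lions monotonicity route as the paper: where you add the two cross-optimality inequalities and then recover the gap from strict concavity in the verification step, the paper obtains the same identity directly by applying It\^o's formula to $v-v'$ along the $\mu$-optimal trajectory, exhibiting the duality gap explicitly as $E\int_{t_0}^{\tau_\mu\wedge T}\frac{1}{4c}(v_x-v'_x)^2\,dt$ plus its symmetric counterpart, which Assumption~\ref{assumption-uniqueness} then forces to vanish. The ``main obstacle'' you flag is actually unnecessary: once $v_x=v'_x$ holds $P\times dt$-a.e.\ along $X^{\mu}$, that process already solves the SDE driven by the $\mu'$-optimal feedback, so pathwise uniqueness of the SDE gives $X^{\mu}=X^{\mu'}$ up to the stopping time and hence $\tau_\mu=\tau_{\mu'}$ a.s., with no need to identify the two feedback functions on the whole reachable region.
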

\begin{proof}
The proof is similar to that of \cite[Proposition 3.1]{BZ16}. Suppose $\mu$ and $\mu'$ are two fixed points of $\Phi^{t_0,\pi, m}$. Then we obviously have $F_{\mu}(t_0)=F_{\mu'}(t_0)=\pi$. Moreover, $\mu, \mu'\in\mathcal{AC}^T_{t_0}$ by \eqref{taucdf}.
Assumption~\ref{assumption-uniqueness} then implies that
\begin{equation}\label{uniqueness-eq-0}
\int_{t_0}^T [R_{\mu}(t)-R_{\mu'}(t)] d(\mu-\mu')(t)\le 0.
\end{equation}

Write $v(t,x;c):=v(t,x;\mu,c)$ and $v'(t,x;c):=v(t,x;\mu',c)$.
Let $(\theta, \xi, c)$ be a randomization according to the initial condition $(t_0,\pi,m)$, and let $X^\mu$, $X^{\mu'}$, $\tau_\mu$, $\tau_{\mu'}$ be constructed as in Section~\ref{subsec:existence}. Also define $\tau^\eps_\mu$ and $\tau^\eps_{\mu'}$ to be the first hitting time of level $\eps>0$.
By It\^o's lemma and the PDE satisfied by $v$ and $v'$, we have

\begin{equation*}
\begin{aligned}
&E\left[R_{\mu}(\tau_\mu)-R_{\mu'}(\tau_\mu)\right]\\
&= E\left[1_{\{\theta=1\}} (R_{\mu}(t_0)-R_{\mu'}(t_0))\right]+E\left[1_{\{\theta=0\}} (v-v')(\tau_\mu\wedge T, X^\mu_{\tau_\mu\wedge T};c)\right]\\
&=\pi [R_\mu(t_0)-R_{\mu'}(t_0)]+\lim_{\eps\rightarrow 0} E\left[1_{\{\theta=0\}} (v-v')(\tau^\eps_\mu\wedge T, X^\mu_{\tau^\eps_\mu\wedge T};c) \right]\\
&=\pi [R_\mu(t_0)-R_{\mu'}(t_0)]+\lim_{\eps\rightarrow 0}E \left[1_{\{\theta=0\}}\left\{ (v-v')(t_0,\xi;c)+\int_{t_0}^{\tau^\eps_\mu\wedge T} \frac{1}{4c}(v_x-v'_x)^2(t,X^\mu_t;c)dt\right\}\right]\\
&=\pi [R_\mu(t_0)-R_{\mu'}(t_0)]+E \left[1_{\{\theta=0\}}\left\{ (v-v')(t_0,\xi;c)+\int_{t_0}^{\tau_\mu\wedge T} \frac{1}{4c}(v_x-v'_x)^2(t,X^\mu_t;c)dt\right\}\right].
\end{aligned}
\end{equation*}
Using  $\mu=\Phi^{t_0,\pi,m}(\mu)=\mathcal{L}(\tau_\mu)$ and  $R_{\mu}(t_0)=R(t_0,F_{\mu}(t_0))=R(t_0,\pi)=R(t_0,F_{\mu'}(t_0))=R_{\mu'}(t_0)$, we get
\begin{equation}\label{uniqueness-eq-1}
\begin{aligned}
&\int_{t_0}^\infty [R_{\mu}(t)-R_{\mu'}(t)] d\mu(t)\\
&=E\left[1_{\{\theta=0\}}(v-v')(t,\xi;c)\right]+E\left[1_{\{\theta=0\}}\int_{t_0}^{\tau_\mu\wedge T} \frac{1}{4c}(v_x-v'_x)^2(t,X^\mu_t;c)dt\right]
\end{aligned}
\end{equation}
Exchanging the roles of $\mu$ and $\mu'$ in \eqref{uniqueness-eq-1} and adding the resulting equation to  \eqref{uniqueness-eq-1} yields
\begin{align*}
&\int_{t_0}^\infty [R_{\mu}(t)-R_{\mu'}(t)] d(\mu-\mu')(t)\\
&= E\left[\frac{1_{\{\theta=0\}}}{4c}\left\{\int_{t_0}^{\tau_\mu\wedge T}(v_x-v'_x)^2(t,X^\mu_t;c)dt+\int_{t_0}^{\tau_{\mu'}\wedge T}(v'_x-v_x)^2(t,X^{\mu'}_t;c)dt\right\}\right]\geq 0.
\end{align*}
Since $R_\mu(t)=R_{\mu'}(t)=R_\infty$ for any $t> T$, the left hand side can be replaced by
\begin{equation*}
\int_{t_0}^T [R_{\mu}(t)-R_{\mu'}(t)] d(\mu-\mu')(t)
\end{equation*}
which is non-positive by \eqref{uniqueness-eq-0}. So on the set $\{\theta=0\}$, we have
\[(v_x-v'_x)^2(t,X^\mu_t;c)1_{\{t_0\le t< \tau_{\mu}\wedge T\}}=(v'_x-v_x)^2(t,X^{\mu'}_t;c)1_{\{t_0\le t< \tau_{\mu'}\wedge T\}}=0 \quad P\times dt\text{-a.e.}\]
The first term being zero implies
\[v_x(t,X^{\mu}_t;c)1_{\{t_0\le t< \tau_{\mu}\wedge T\}}=v'_x(t,X^{\mu}_t;c)1_{\{t_0\le t< \tau_{\mu}\wedge T\}} ~P\times dt\text{-a.e.}\]
By the uniqueness of the solution of the SDE \eqref{SDE}, we have $X^{\mu}=X^{\mu'}$ a.s.\ up to time $\tau_\mu\wedge T$. Thus, on the set $\{\tau_\mu\le T\}$, we have $\tau_{\mu}= \tau_{\mu'}$ a.s.\ On the set $\{\tau_\mu> T\}$, we also have $\tau_{\mu'}>T$ since $X^{\mu'}$ coincides with $X^\mu$ up to time $T$, and consequently, $\tau_\mu=\tau_{\mu'}=\Delta$ by our construction.
In summary, we have proved that $\tau_\mu=\tau_{\mu'}$ a.s.\ and thus, $\mu=\mathcal{L}(\tau_\mu)=\mathcal{L}(\tau_{\mu'})=\mu'$.
\end{proof}

\subsection{Stability of the fixed point} How stable or sensitive is the equilibrium with respect to changes in the reward function? This question is not only of interest in its own, but also provides the basis for numerical computation of the mean field equilibrium (see Section~\ref{subsec:numm}).
Specifically, the stability result guarantees that the sequence of equilibria of discretized (mean field) games,
%\footnote{By discretizing the rank or equivalently the reward function, one can convert the infinite dimensional fixed point problem to solving a finite dimensional system of non-linear equations.}, 
if converging, will converge weakly to an equilibrium of the original game.

Introduce a Lipschitz condition:
\[\mathcal{R}_{DL}:=\{R\in\mathcal{R}_D: \exists L_G\in\bbR \text{ s.t.\ } |G(t,y_1)-G(t,y_2)|\le L_G |y_1-y_2| \ \forall t\in\bbR_+, y_1, y_2\in \bbR\}.\]
We then have the following stability (continuity) result.

\begin{thm}\label{thm:stability}
Suppose $\{R_n(t,r)=G(t,H_n(r))\}\subseteq\mathcal{R}_{DL}$ is a sequence of reward functions which converges to $R(t,r)=G(t,H(r))\in\mathcal{R}_{DL}$ in the following sense:
\[\| H_n-H\|_{L^1[0,1]}\rightarrow 0 \text{ as } n\rightarrow \infty.\]
Let $\mu_n$ be a fixed point of the best response mapping $\Phi^{t_0,\pi,m}_n$ associated with $R_n$ (given by Theorem~\ref{thm:existence}). Then $\mu_n$ has a subsequence which converges weakly
%(or equivalently, in $W_1$-metric)
to a fixed point $\mu$ of the best response mapping $\Phi^{t_0,\pi,m}$ associated with $R$.
\end{thm}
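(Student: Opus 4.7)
The plan is to combine a compactness argument with a continuity (stability) argument for the best-response map, following the standard template for passing to limits in fixed-point equations. First, the proof of Theorem~\ref{thm:existence} shows that each fixed point $\mu_n$ lies in the weakly-closed (hence compact) convex set $\mathcal{D}_{\underline{f}}\subseteq\mathcal{P}(\bbT)$, and a common $\underline{f}$ can be chosen uniformly in $n$ using an upper bound on $R_n(0,0)$ coming from the $L^1$-convergence of $H_n$ together with the Lipschitz property of $G$. I would then extract a subsequence, still denoted $\{\mu_n\}$, with $\mu_n\to\mu$ weakly for some $\mu\in\mathcal{D}_{\underline{f}}$. In particular, $F_\mu$ is strictly increasing on $[t_0,T]$, and $F_{\mu_n}(t)\to F_\mu(t)$ at every continuity point of $F_\mu$ (i.e., outside a countable set).

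The goal is then to pass to the limit in the fixed-point identity
\[F_{\mu_n}(t)=\pi+(1-\pi)\,E\left[\int_{t_0}^{t}\frac{u_n(s,0;\mu_n,c)}{u_n(t_0,\xi;\mu_n,c)}f_{\tau^\circ_{\xi/\sigma}}(s-t_0)\,ds\right]\]
so as to obtain the analogous identity for $\mu$, i.e., $\mu=\Phi^{t_0,\pi,m}(\mu)$. Thanks to the uniform upper and lower bounds on $u_n$ from \eqref{u-bdd}, dominated convergence reduces the task to showing $u_n(s,x;\mu_n,c)\to u(s,x;\mu,c)$ for each fixed $x\ge 0$, $c\in[\underline{c},\overline{c}]$ and Lebesgue-a.e.\ $s$. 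Combining the Lipschitz bound on $G$ with the boundedness of the exponential, this further reduces to
\[E\bigl|H_n(F_{\mu_n}(\theta))-H(F_\mu(\theta))\bigr|\longrightarrow 0,\qquad \theta:=s+\tfrac{x^2}{\sigma^2}\tau^\circ_1,\]
and since the integrand is uniformly bounded, bounded convergence means it suffices to show that $H_n(F_{\mu_n}(\theta))\to H(F_\mu(\theta))$ almost surely.

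The key ingredient I would establish is a Helly-type lemma: if $H_n$ and $H$ are monotone decreasing and uniformly bounded, and $H_n\to H$ in $L^1[0,1]$, then $H_n(r_n)\to H(r)$ for every continuity point $r$ of $H$ and every sequence $r_n\to r$. The proof sandwiches $H_n(r_n)$ between $H_n(r-\delta)$ and $H_n(r+\delta)$ (using monotonicity), lets $n\to\infty$ via averaged $L^1$ convergence $\tfrac{1}{\delta}\int_r^{r+\delta}H_n\to\tfrac{1}{\delta}\int_r^{r+\delta}H$, and then $\delta\to 0$. Applied with $r_n=F_{\mu_n}(\theta)$ and $r=F_\mu(\theta)$, this yields the required almost-sure convergence, provided (i) $r_n\to r$ almost surely, which holds because $\theta$ has a density and $F_\mu$ has at most countably many jumps; and (ii) $r=F_\mu(\theta)$ is a continuity point of $H$ almost surely, which holds because $F_\mu$ is strictly increasing on $[t_0,T]$, so the preimage under $F_\mu$ of the countably many discontinuities of $H$ is countable and thus Lebesgue-null.

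The main obstacle is precisely controlling the discontinuities: both $H_n$ and $H$ may jump at different ranks, and one must ensure that these jumps are not hit on a non-negligible set when evaluated at the random rank $F_{\mu_n}(\theta)$. The decomposition $R_n=G\circ H_n$ with a common Lipschitz $G$ decouples time from rank, and the strict monotonicity of the limit c.d.f.\ $F_\mu$ inherited from $\mathcal{D}_{\underline{f}}$ prevents the rank discontinuities of $H$ from persisting on a set of positive measure. For the $T=\infty$ case, the same argument applies once $[t_0,\infty]$ is equipped with the order topology used in the proof of Theorem~\ref{thm:existence}, which preserves compactness and the one-sided Portmanteau characterization of weak convergence sufficient for the preceding limiting step.
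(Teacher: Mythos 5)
Your proof is correct, but it takes a genuinely different route from the paper's. The paper splits the problem in two: it sets $\tilde\mu_n:=\Phi^{t_0,\pi,m}(\mu_n)$, invokes the $W_1$-continuity of $\Phi^{t_0,\pi,m}$ already established in the existence proof to get $\tilde\mu_n\to\Phi^{t_0,\pi,m}(\mu)$, and then proves the quantitative estimate $\|F_{\mu_n}-F_{\tilde\mu_n}\|_\infty\le C\|H_n-H\|_{L^1[0,1]}$ by comparing $u_n(\cdot\,;\mu_n,c)$ with $u(\cdot\,;\mu_n,c)$ --- same measure, different reward --- so that the change of variables $r=F_{\mu_n}(t)$ (valid because $f_{\mu_n}\ge\underline{f}$) converts the error into exactly $\int_0^1|H_n(r)-H(r)|\,dr$. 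You instead pass to the limit directly in the fixed-point identity, which forces you to handle the diagonal convergence $u_n(\cdot\,;\mu_n,c)\to u(\cdot\,;\mu,c)$; your Helly-type lemma ($L^1$-convergence of uniformly bounded monotone functions implies $H_n(r_n)\to H(r)$ at continuity points $r$ of $H$ whenever $r_n\to r$) is the right tool for that, and your handling of the discontinuities of $F_\mu$ and of $H$ via the strict monotonicity inherited from $\mathcal{D}_{\underline{f}}$ mirrors the paper's continuity argument. What the paper's route buys is an explicit Lipschitz-type stability bound with rate $\|H_n-H\|_{L^1[0,1]}$, which matters for the numerical scheme; your route is purely qualitative but somewhat more self-contained, since it re-derives the needed continuity in diagonal form rather than invoking it separately. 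One caveat: your claim that a uniform $\underline{f}$ follows ``from the $L^1$-convergence of $H_n$ together with the Lipschitz property of $G$'' is not quite right --- $L^1$-convergence of monotone functions controls $H_n$ on $[\eps,1-\eps]$ but not $H_n(0)$ or $H_n(1-)$, so a uniform bound on the $H_n$ must be assumed outright; the paper makes the same tacit assumption when it bounds $f_{\mu_n}$ from below by the $\underline{f}$ built from $R(0,0)$ and $R_\infty$, so this does not distinguish the two arguments.
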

\begin{proof}
Assume first $T<\infty$.

First, we construct a weak limit. Since each $\mu_n$ belongs to the compact space $\mathcal{P}(\bbT)$, we can extract a subsequence of $\mu_n$ which converges weakly to some $\mu$. With a slight abuse of notation, we still denote the convergent subsequence by $\mu_{{n}}$.

Next, from part (a) of the the proof of Theorem~\ref{thm:existence}, we know $\Phi^{t_0,\pi,m}$ is continuous on $(\mathcal{D}_{\underline{f}},W_1)$. It follows that $\tilde\mu_n:=\Phi^{t_0,\pi,m}(\mu_{n})$ converges in $W_1$-metric and also weakly to $\Phi^{t_0,\pi,m}(\mu)$.

The main task is to show the the  c.d.f.s of $\mu_n=\Phi^{t_0,\pi,m}_{n}(\mu_n)$ and of $\tilde\mu_n=\Phi^{t_0,\pi,m}(\mu_n)$ are uniformly close for $n$ large. More precisely, we claim that
\begin{equation}\label{eq:est0}
\|F_{\mu_n}-F_{\tilde\mu_n}\|_\infty\le C\|H_n-H\|_{L^1[0,1]}
\end{equation}
for some constant $C$ independent of $n$. Denote by $u_n$ the transformed value function associated with the reward function $R_n$. The proof relies on the following two estimates:
\begin{equation}\label{eq:est1}
E|u_n(t_0,\xi;\mu_n,c)-u(t_0,\xi;\mu_n,c)|\le C_1 \|H_n-H\|_{L^1[0,1]},
\end{equation}
\begin{equation}\label{eq:est2}
E\int_{t_0}^T |u_n(t,0;\mu_n,c)-u(t,0;\mu_n,c)|f_{\tau^\circ_{\xi/\sigma}}(t-t_0)dt\le C_2 \|H_n-H\|_{L^1[0,1]},
\end{equation}
where the constants $C_1$ and $C_2$ do not depend on $n$.

By \eqref{taupdf2} and \eqref{u-bdd}, we know the p.d.f.\ of $\mu_n$, denoted by $f_{\mu_n}$, is bounded from below in $(t_0,T)$ by $\underline{f}$ (defined in \eqref{eq:flbdd}).
By \eqref{u}, the Lipschitz continuity of $G(t,\cdot)$, the assumption that $G(t,x)=R_\infty$ for all $t>T$, and \eqref{eq:flbdd},
we obtain
\begin{align*}
&E\left[|u_n(t_0,\xi;\mu_n,c)-u(t_0,\xi;\mu_n,c)|\right]\\
&\le E\left[E\left[\left|\exp\left(\frac{(R_n)_{\mu_n}(t_0+\tau^\circ_{\xi/\sigma})}{2c\sigma^2}\right)-\exp\left(\frac{R_{\mu_n}(t_0+\tau^\circ_{\xi/\sigma})}{2c\sigma^2}\right)\right|\bigg| \xi, c\right]\right]\\
%&\le E\left[E\left[\frac{1}{2c\sigma^2}\exp\left(\frac{R(0,0)}{2c\sigma^2}\right)\left|(R_n)_{\mu_n}\left(t_0+\tau^\circ_{\xi/\sigma}\right)-R_{\mu_n}\left(t_0+\tau^\circ_{\xi/\sigma}\right)\right|\bigg| \xi, c\right]\right]\\
&\le E\left[\frac{L_G}{2c\sigma^2}\exp\left(\frac{R(0,0)}{2c\sigma^2}\right)E\left[1_{\{t_0+\tau^\circ_{\xi/\sigma}\le T\}}\left|H_n(F_{\mu_n}(t_0+\tau^\circ_{\xi/\sigma}))-H(F_{\mu_n}(t_0+\tau^\circ_{\xi/\sigma}))\right|\bigg| \xi, c\right]\right]\\
%&= E\left[\frac{L_G}{2c\sigma^2}\exp\left(\frac{R(0,0)}{2c\sigma^2}\right)\int_{t_0}^T\left|H_n(F_{\mu_n}(t))-H(F_{\mu_n}(t))\right|f_{\tau^\circ_{\xi/\sigma}}(t-t_0)dt\right]\\
&= E\left[\frac{L_G}{2c\sigma^2}\exp\left(\frac{R(0,0)}{2c\sigma^2}\right)\int_{t_0}^T\left|H_n(F_{\mu_n}(t))-H(F_{\mu_n}(t))\right|\frac{f_{\tau^\circ_{\xi/\sigma}}(t-t_0)}{f_{\mu_n}(t)}dF_{\mu_n}(t)\right]\\
&\le \frac{L_G}{2\underline{c}\sigma^2}\exp\left(\frac{R(0,0)}{2\underline{c}\sigma^2}\right) \int_{t_0}^T\left|H_n(F_{\mu_n}(t))-H(F_{\mu_n}(t))\right|\frac{E f_{\tau^\circ_{\xi/\sigma}}(t-t_0)}{\underline{f}(t)}dF_{\mu_n}(t)\\
&\le \frac{L_G}{2\underline{c}\sigma^2(1-\pi)}\exp\left(\frac{R(0,0)}{2\underline{c}\sigma^2}+\frac{R(0,0)-R_\infty}{2\overline{c}\sigma^2}\right)\int_{t_0}^T \left|H_n(F_{\mu_n}(t))-H(F_{\mu_n}(t))\right|dF_{\mu_n}(t)\\
&=\frac{L_G}{2\underline{c}\sigma^2(1-\pi)}\exp\left(\frac{R(0,0)}{2\underline{c}\sigma^2}+\frac{R(0,0)-R_\infty}{2\overline{c}\sigma^2}\right)\int_{F_{\mu_n}(t_0)}^{F_{\mu_n}(T)} \left|H_n(r)-H(r)\right|dr\\
&\le \frac{L_G}{2\underline{c}\sigma^2(1-\pi)}\exp\left(\frac{R(0,0)}{2\underline{c}\sigma^2}+\frac{R(0,0)-R_\infty}{2\overline{c}\sigma^2}\right) \|H_n-H\|_{L^1[0,1]}
\end{align*}
This proves \eqref{eq:est1}. Similarly,
\begin{align*}
&E\int_{t_0}^T|u_n(t,0;\mu_n,c)-u(t,0;\mu_n,c)|f_{\tau^\circ_{\xi/\sigma}}(t-t_0)dt\\
&= E\int_{t_0}^T \left|\exp\left(\frac{(R_n)_{\mu_n}(t)}{2c\sigma^2}\right)-\exp\left(\frac{R_{\mu_n}(t)}{2c\sigma^2}\right)\right|f_{\tau^\circ_{\xi/\sigma}}(t-t_0)dt\\
%&\le E\left[\frac{1}{2c\sigma^2}\exp\left(\frac{R(0,0)}{2c\sigma^2}\right)\int_{t_0}^s\left|g_n\left(z;\mu_n\right)-g\left(z;\mu_n\right)\right|f_{\tau^\circ_{\xi/\sigma}}(z-t_0)dz\right]\\
%&\le \frac{L_G}{2\underline{c}\sigma^2}\exp\left(\frac{R(0,0)}{2\underline{c}\sigma^2}\right)\int_{t_0}^s\left|H_n(F_{\mu_n}(z))-H(F_{\mu_n}(z))\right|Ef_{\tau^\circ_{\xi/\sigma}}(z-t_0)dz\\
&\le \frac{L_G}{2\underline{c}\sigma^2}\exp\left(\frac{R(0,0)}{2\underline{c}\sigma^2}\right)\int_{t_0}^T\left|H_n(F_{\mu_n}(t))-H(F_{\mu_n}(t))\right|\frac{E f_{\tau^\circ_{\xi/\sigma}}(t-t_0)}{f_{\mu_n}(t)}dF_{\mu_n}(t)\\
%&\le \frac{L_G}{2\underline{c}\sigma^2}\exp\left(\frac{R(0,0)}{2\underline{c}\sigma^2}\right)\int_{t_0}^s\left|H_n(F_{\mu_n}(z))-H(F_{\mu_n}(z))\right|\frac{E f_{\tau^\circ_{\xi/\sigma}}(z-t_0)}{\underline{f}(z)}dF_{\mu_n}(z)\\
&\le \frac{L_G}{2\underline{c}\sigma^2(1-\pi)}\exp\left(\frac{R(0,0)}{2\underline{c}\sigma^2}+\frac{R(0,0)-R_\infty}{2\overline{c}\sigma^2}\right)\int_{F_{\mu_n}(t_0)}^{F_{\mu_n}(T)} \left|H_n(r)-H(r)\right|dr\\
&\le \frac{L_G}{2\underline{c}\sigma^2(1-\pi)}\exp\left(\frac{R(0,0)}{2\underline{c}\sigma^2}+\frac{R(0,0)-R_\infty}{2\overline{c}\sigma^2}\right) \|H_n-H\|_{L^1[0,1]},
\end{align*}
which verifies \eqref{eq:est2}. We are now ready to show \eqref{eq:est0}. For $t\le t_0$, $F_{\mu_n}(t)=F_{\tilde\mu_n}(t)$ trivially.
For $t\in(t_0,T]$, using \eqref{taucdf}, \eqref{u-bdd}, \eqref{eq:est1} and \eqref{eq:est2}, we deduce that
\begin{align*}
& \left|F_{\mu_n}(t)-F_{\tilde\mu_n}(t)\right|\\
&\le (1-\pi)E\left[\int_{t_0}^t \left|\frac{u_n(s,0;\mu_n,c)}{u_n(t_0,\xi;\mu_n,c)}-\frac{u(s,0;\mu_n,c)}{u(t_0,\xi;\mu_n,c)}\right|f_{\tau^\circ_{\xi/\sigma}}(s-t_0)ds\right]\\
&= (1-\pi)E\left[\int_{t_0}^t \left|\frac{u_n(s,0;\mu_n,c)u(t_0,\xi;\mu_n,c)-u(s,0;\mu_n,c)u_n(t_0,\xi;\mu_n,c)}{u_n(t_0,\xi;\mu_n,c)u(t_0,\xi;\mu_n,c)}\right|f_{\tau^\circ_{\xi/\sigma}}(s-t_0)ds\right]\\
%&\le E\left[\int_t^s \left(\frac{\left|u_n(z,0;\mu_n,c)-u(z,0;\mu_n,c)\right|}{u_n(t,\xi;\mu_n,c)}+\frac{u(z,0;\mu_n,c)\left|u(t,\xi;\mu_n,c)-u_n(t,\xi;\mu_n,c)\right|}{u_n(t,\xi;\mu_n,c)u(t,\xi;\mu_n,c)}\right)f_{\tau^\circ_{\xi/\sigma}}(z-t)dz\right]\\
&\le (1-\pi) \exp\left(\frac{R(0,0)}{2\underline{c}\sigma^2}-\frac{R_\infty}{\overline{c}\sigma^2}\right) E\left[\left|u(t_0,\xi;\mu_n,c)-u_n(t_0,\xi;\mu_n,c)\right|\int_{t_0}^t  f_{\tau^\circ_{\xi/\sigma}}(s-t_0)ds\right]\\
&\quad +(1-\pi)\exp\left(\frac{-R_\infty}{2\overline{c}\sigma^2}\right) E\left[\int_{t_0}^t \left|u_n(s,0;\mu_n,c)-u(s,0;\mu_n,c)\right|f_{\tau^\circ_{\xi/\sigma}}(s-t_0)ds\right]\\
&\le (1-\pi)\left\{C_1\exp\left(\frac{R(0,0)}{2\underline{c}\sigma^2}-\frac{R_\infty}{\overline{c}\sigma^2}\right)+C_2 \exp\left(\frac{-R_\infty}{2\overline{c}\sigma^2}\right)\right\} \|H_n-H\|_{L^1[0,1]}.
\end{align*}
When $T<\infty$, both $F_{\mu_n}$ and $F_{\tilde\mu_n}$ stay constant after time $T$ until time $\Delta$ where they jump to one. So the maximum of $|F_{\mu_n}-F_{\tilde\mu_n}|$ occurs on $[t_0,T]$ and an upper bound is given by \eqref{eq:est0}.

Finally, we have all the pieces to show $\mu$ is a fixed point of $\Phi^{t_0,\pi,m}$. Since $\tilde \mu_n$ converges weakly to $\Phi^{t_0,\pi,m}(\mu)$ and $|F_{\mu_n}-F_{\tilde \mu_n}|$ converges to zero uniformly, we know $\mu_n$ also converges to $\Phi^{t_0,\pi,m}(\mu)$ weakly. By uniqueness of the weak limit, we must have $\Phi^{t_0,\pi,m}(\mu)=\mu$.

For $T=\infty$, we  use the compactification of $\mathcal{P}([t_0,\infty])$ as in the proof of Theorem \ref{thm:existence}.
\end{proof}

\subsection{Approximate equilibrium of the $N$-player game }
We want to see when we can approximate a game with finitely many players with the  game with infinitely many players.
Our results will be derived for the reward functions that are H\"older continuous in the rank variable. For $\alpha\in(0,1]$, define
\[\mathcal{R}_{C^\alpha}:=\{R\in\mathcal{R}: \exists L_\alpha \in\bbR \text{ s.t.\ } |R(t,r_1)-R(t,r_2)|\leq L_\alpha |r_1-r_2|^\alpha \ \forall t\in\bbR_+, r_1, r_2\in \bbR\}\subseteq\mathcal{R}_D.\]
For simplicity of the presentation, we consider only a freshly started tournament, that is, $t_0=0$, and $\pi=0$.

The $N$-player system is given by
\[X^{a_i}_{i,t}=x_{i}-\int_0^t a_{i,s} ds+\sigma B_{i,t}, \quad i=1,\ldots, N,\]
where $B_1, \ldots, B_N$ are independent Brownian motions, and $\mathbf{a}=(a_{1}, \ldots, a_N)$ is a vector of admissible actions, meaning that each $a_i$ is non-negative and progressively measurable with respect to the filtration of $B_1, \ldots, B_N$, and yields a unique strong solution $X^{a_i}_i$ up to $\tau^{a_i}_i:=\inf\{t\geq 0: X^{a_i}_{i,t}=0\}$, and if $\tau^{a_i}_i$ is non-atomic.

Let $\bar\mu^{N,\mathbf{a}}=\frac{1}{N}\sum_{i=1}^N \delta_{\tau_i^{a_i}}$ be the empirical distribution of the completion time.
\begin{defn}
An admissible action vector $\mathbf{a}=(a_{1}, \ldots, a_N)$ is called an $\epsilon$-Nash equilibrium action of the $N$-player game if for any $i\in\{1, \ldots, N\}$ and any admissible control $\beta$, we have
\[{E}\left[R_{\bar{\mu}^{N,\mathbf{a}}}(\tau^{a_i}_{i})-\int_0^{\tau^{a_i}_i\wedge T} c_i a_{i,t}^2dt\right]+\epsilon\ge {E}\left[R_{\bar{\mu}^{N,\mathbf{a}^i_\beta}}(\tau^{\beta}_{i})-\int_0^{\tau^\beta_i\wedge T} c_i \beta_t^2dt\right]\]
where
$\mathbf{a}^{i}_\beta=(a_1, \ldots, a_{i-1}, \beta, a_{i+1}, \ldots, a_N)$.
\end{defn}

We have the following approximation result.

\begin{thm}\label{thm:epsNE}
Assume $R\in\mathcal{R}_{C^\alpha}$ for some $\alpha\in(0,1]$ and $(x_i, c_i)$, $i=1, \ldots, N$ are i.i.d.\ samples from $m$.
For any fixed point $\mu$ of $\Phi^{0,0,m}$,  the vector $\mathbf{\bar a}=(\bar a_1, \ldots, \bar a_N)$ with
\[\bar{a}_{i,t}:=-(2c_i)^{-1}v_x(t,X^{\bar{a}_i}_{i,t};\mu,c_i)1_{\{t< \tau^{\bar a_i}_i\}}, \ i=1,\ldots, N\]
 is an $O(N^{-\alpha/2})$-Nash equilibrium action  of the $N$-player game, as $N\rightarrow \infty$.
\end{thm}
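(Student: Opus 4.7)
The plan is to compare, for each player $i$, her $N$-player payoff
\[J_i(\mathbf{a}):=E\!\left[R_{\bar{\mu}^{N,\mathbf{a}}}(\tau^{a_i}_{i})-\int_0^{\tau^{a_i}_i\wedge T} c_i a_{i,t}^2\,dt\right]\]
with her mean field payoff
\[\tilde J_i(a):=E\!\left[R_{\mu}(\tau^{a}_{i})-\int_0^{\tau^{a}_i\wedge T} c_i a_{t}^2\,dt\right],\]
in which the rank reward is evaluated against the \emph{fixed} equilibrium measure $\mu$ rather than the empirical one. By Proposition~\ref{prop:taupdf} together with Remark~\ref{rmk:filtration}, for every admissible control $\beta$ in the $N$-player game (which may be progressively measurable with respect to the filtration generated by all of $B_1,\ldots,B_N$ and the initial randomizations),
\[\tilde J_i(\beta)\le v(0,x_i;\mu,c_i)=\tilde J_i(\bar a_i),\]
since $B_i$ remains a Brownian motion for this larger filtration and $\bar a_i$ is the MFG-optimal feedback.

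The heart of the argument is to show that $J_i$ and $\tilde J_i$ differ by at most $O(N^{-\alpha/2})$ along the two profiles of interest. Because $\mu$ is a fixed point of $\Phi^{0,0,m}$ and the samples $(x_j,c_j)\sim m$ are i.i.d.\ and independent of the driving Brownian motions, Proposition~\ref{prop:taupdf} implies that the completion times $\{\tau^{\bar a_j}_j\}_{j=1,\ldots,N}$ are i.i.d.\ with common law $\mu$. When player $i$ deviates to $\beta$, the remaining $N-1$ times are still i.i.d.\ samples from $\mu$, independent of $\tau_i^\beta$; denote their empirical c.d.f.\ by $F_{\bar\nu^{N-1}_{-i}}$. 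The Dvoretzky–Kiefer–Wolfowitz inequality (Massart's universal form) gives $E\|F_{\bar\nu^{N-1}_{-i}}-F_\mu\|_\infty^2\le C/(N-1)$. Writing $F_{\bar\mu^{N,\mathbf{a}^i_\beta}}=\tfrac{N-1}{N}F_{\bar\nu^{N-1}_{-i}}+\tfrac{1}{N}\mathbf{1}_{[\tau^\beta_i,\infty)}$ and using the triangle inequality yields
\[\|F_{\bar\mu^{N,\mathbf{a}^i_\beta}}-F_\mu\|_\infty\le \|F_{\bar\nu^{N-1}_{-i}}-F_\mu\|_\infty+\tfrac{1}{N}.\]
The assumption $R\in\mathcal{R}_{C^\alpha}$ then gives the pointwise bound $|R_{\bar\mu^{N,\mathbf{a}^i_\beta}}(t)-R_\mu(t)|\le L_\alpha\|F_{\bar\mu^{N,\mathbf{a}^i_\beta}}-F_\mu\|_\infty^\alpha$ for every $t$; taking expectations and applying Jensen's inequality (valid since $\alpha/2\le 1/2$) produces
\[|J_i(\mathbf{a}^i_\beta)-\tilde J_i(\beta)|\le L_\alpha\, E\|F_{\bar\mu^{N,\mathbf{a}^i_\beta}}-F_\mu\|_\infty^\alpha \le L_\alpha\bigl(E\|F_{\bar\mu^{N,\mathbf{a}^i_\beta}}-F_\mu\|_\infty^2\bigr)^{\alpha/2}=O(N^{-\alpha/2}).\]
Repeating the argument with $\beta$ replaced by $\bar a_i$ gives $|J_i(\mathbf{\bar a})-\tilde J_i(\bar a_i)|=O(N^{-\alpha/2})$.

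Chaining the three bounds produces
\[J_i(\mathbf{a}^i_\beta)\le \tilde J_i(\beta)+O(N^{-\alpha/2})\le \tilde J_i(\bar a_i)+O(N^{-\alpha/2})\le J_i(\mathbf{\bar a})+O(N^{-\alpha/2}),\]
which is precisely the $\epsilon$-Nash property with $\epsilon=O(N^{-\alpha/2})$. I expect the main subtlety to be the appeal to Remark~\ref{rmk:filtration}: the deviation $\beta$ may observe the competitors' Brownian motions, so the MFG-optimality of $\bar a_i$ against the external measure $\mu$ must survive enlargement of the underlying filtration; without this the step $\tilde J_i(\beta)\le \tilde J_i(\bar a_i)$ would not be available. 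A secondary issue is the treatment of $\mu$'s atom at $\Delta$ (incompletion), but since both empirical and limiting c.d.f.'s equal $1$ at $\Delta$, the sup over $\bbR_+$ reduces to the sup over $[0,T]$ and DKW applies directly.
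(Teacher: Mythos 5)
Your proposal is correct and follows essentially the same route as the paper: compare the $N$-player payoff to the mean-field payoff against the fixed measure $\mu$ using the H\"older continuity of $R$ together with the i.i.d.\ structure of the competitors' completion times and the Dvoretzky--Kiefer--Wolfowitz inequality, then insert the MFG optimality of $\bar a_i$ (justified for the enlarged filtration by Remark~\ref{rmk:filtration}) in between. The only cosmetic difference is that you bound $E\|F_{\bar\nu^{N}_{-i}}-F_\mu\|_\infty^\alpha$ via a second-moment DKW bound plus Jensen, whereas the paper integrates the DKW tail bound directly; both yield the same $O(N^{-\alpha/2})$ rate.
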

\begin{proof}
Let $\mu$ and $\bar{a}_{i,t}$ be defined as in the theorem statement. To simplify notation, we omit the superscript of any state process $X_i$ and its first passage time $\tau_i$ if it is controlled by the optimal Markovian feedback strategy $-(2c_i)^{-1}v_x(t,x;\mu,c_i)$. Fix an initial position $x$ and cost  $c$. Let
\[V(x,c):=v(0,x;\mu,c)={E}\left[R_{\mu}(\tau^{x,c})-\int_0^{\tau^{x,c}\wedge T} \frac{1}{4c}v_x^2(t,X^{x,c}_t;\mu,c)dt\right]\]
be the value of the  game with infinitely many players, where
\[d X^{x,c}_t=-\frac{v_x(t,X^{x,c}_t;\mu,c)}{2c}1_{\{t< \tau^{x,c}\}}dt+\sigma dB_t, \quad X^{x,c}_0=x\]
and $\tau^{x,c}$ is the first passage time of $X^{x,c}$ to level zero. Since $X_i$ is an  identical copy of $X^{x_i, c_i}$, we have
\begin{align*}
V(x_i,c_i)&={E}\left[R_{\mu}(\tau_i)-\int_0^{\tau_i\wedge T} \frac{1}{4c_i}v_x^2(t,X_{i,t};\mu,c)dt\right]={E}\left[R_{\mu}(\tau_i)-\int_0^{\tau_i\wedge T} c_i \bar{a}^2_{i,t} dt\right].
\end{align*}
Let
\[\bar{\mu}^{N}:=\frac{1}{N}\sum_{i=1}^N \delta_{\tau_{i}}\]
be the empirical distribution of the completion time, and
\begin{align*}
J^{N}_i:&=E\left[R_{\bar{\mu}^N}(\tau_{i})-\int_0^{\tau_i\wedge T} c_i\bar{a}^2_{i,t}dt\right]
\end{align*}
be the net gain of player $i$ in an $N$-player game when everybody uses the candidate approximate Nash equilibrium $\mathbf{\bar a}$. We first show that $J^N_i$ and $V(x_i,c_i)$ are close. By the $\alpha$-H\"older continuity of $R$,
we have
\begin{align*}
V(x_i,c_i)-J^N_i= {E}\left[(R_{\mu}(\tau_i)-R_{\bar{\mu}^N}(\tau_{i}))\right]\le L_\alpha\left[\|F_{\bar \mu^N}-F_{\mu^\eps}\|^\alpha_\infty\right]
\end{align*}

Let $q(\cdot \, ;x,c)$ be the distribution of $\tau^{x,c}$. Since $\tau_i$ has $q(\cdot \, ;x_i,c_i)$ as its distribution and   $(x_i, c_i)$ has $m$ as its distribution, we can treat $\bar\mu^N$ as the empirical distribution of i.i.d.\ samples from $q\otimes m=\mu$.\footnote{Being a fixed point of $\Phi^{0,0,m}$, $\mu$ is supported on $[0,\Delta]$. Here by a slight abuse of notation, we identify $\mu$ with $q\otimes m\in\mathcal{P}[0,\infty)$ without changing the action $\mathbf{\bar a}$.} By Dvoretzky-Kiefer-Wolfowitz inequality, we have
\[P\left(\|F_{\bar \mu^N}-F_{\mu}\|_\infty>z\right) \leq 2e^{-2Nz^2} \quad \forall\, z>0.\]
It follows that
\begin{align*}
V(x_i, c_i)-J^N_i&\leq L_\alpha{E}[\|F_{\bar \mu^N}-F_\mu\|^\alpha_\infty]=L_\alpha\int_0^\infty{P}\left(\|F_{\bar \mu^N}-F_{\mu}\|^\alpha_\infty>z\right) dz \\
&\leq L_\alpha\int_0^\infty 2e^{-2Nz^{2/\alpha}} dz= \frac{2L_\alpha}{N^{\alpha/2}}\int_0^\infty e^{-2y^{2/\alpha}} dy=O(N^{-\alpha/2})~\text{ as } N\rightarrow \infty.
\end{align*}

Next, consider the system where player $i$ makes a unilateral deviation from the candidate approximate Nash equilibrium $\mathbf{\bar a}$; say, he chooses an admissible control $\beta$. Denote his controlled state process by $X^\beta_i$, and the state processes of all other players by $X_j$ as before for $j\neq i$. Let
\[\bar{\nu}^N:=\frac{1}{N}(\delta_{\tau^\beta_{i}}+\sum_{j\neq i} \delta_{\tau_{j}})\]
be the corresponding empirical measure of the completion times, and
\[J^{N,\beta}_i:=E\left[R_{\bar{\nu}^N}(\tau^\beta_{i})-\int_0^{\tau^\beta_i\wedge T} c_i\beta^2_{t}dt\right]\]
be the corresponding net gain for player $i$.
We have
\begin{align*}
&J^{N,\beta}_i-V(x_i, c_i)\\
&=E\left[R_{\bar{\nu}^N}(\tau^\beta_{i})-\int_0^{\tau^\beta_i\wedge T} c_i\beta^2_{t}dt\right]-{E}\left[ R_{\mu}(\tau_i)-\int_0^{\tau_i\wedge T} c_i\bar{a}^2_{i,t} dt\right]\\
&=E\left[R_{\bar{\nu}^N}(\tau^\beta_{i})-R_{\mu}(\tau^\beta_i)\right]+E\left[R_{\mu}(\tau^\beta_i)-\int_0^{\tau^\beta_i\wedge T} c_i\beta^2_{t}dt\right]-E\left[R_{\mu}(\tau_i)-\int_0^{\tau_i\wedge T} c_i\bar{a}^2_{i,t} dt\right]\\
&\leq E\left[R_{\bar{\nu}^N}(\tau^\beta_{i})-R_{\mu}(\tau^\beta_i)\right],
\end{align*}
where we have used the optimality of $\bar{a}_i$ for the $i$-th player's problem (in response to $\mu$) in the last step.\footnote{By Remark~\ref{rmk:filtration}, $\bar a_i$ is optimal in the filtration of $B_1, \ldots, B_N$.} Let
\[\bar \nu^{N}_{-i}:=\frac{1}{N-1}\sum_{j\neq i} \delta_{\tau_j}.\]
$\bar \nu^{N}_{-i}$ is the empirical distribution of $(N-1)$ i.i.d.\ samples from the distribution $\mu$.
Similar to how we estimated $V(x_i, c_i)-J^N_i$, we have
\begin{align*}
J^{N,\beta}_i-V(x_i, c_i)&\leq L_\alpha{E}[|F_{\bar{\nu}^N}(\tau^\beta_{i})-F_{\mu}(\tau^\beta_i)|^\alpha]\\
&=L_\alpha E\left[\left|\frac{1}{N}\left(1-F_{\mu}(\tau^\beta_{i})\right)+\frac{N-1}{N}\left(F_{\bar \nu^{N}_{-i}}(\tau^\beta_{i})-F_{\mu}(\tau^\beta_{i})\right)\right|^\alpha\right] \\
&\leq L_\alpha E\left[\left(\frac{1}{N}+\frac{N-1}{N}\|F_{\bar \nu^{N}_{-i}}-F_{\mu}\|_\infty\right)^\alpha\right]\\
&\leq L_\alpha\left(\frac{1}{N}+\frac{N-1}{N}E\|F_{\bar \nu^{N}_{-i}}-F_{\mu}\|_\infty\right)^\alpha=O(N^{-\alpha/2})~\text{ as } N\rightarrow \infty.
\end{align*}
Combining the two estimates, we obtain
\[J^{N,\beta}_i-J^N_i= J^{N,\beta}_i-V(x_i, c_i)+(V(x_i, c_i)-J^N_i)\le O(N^{-\alpha/2})~\text{ as } N\rightarrow \infty.\]
This shows $\mathbf{\bar a}$ is an $O(N^{-\alpha/2})$-Nash equilibrium action.
\end{proof}

\begin{remark}
For reward functions that are merely continuous in the rank variable, we still have convergence of the approximation, but we do not know the convergence rate.
\end{remark}

\section{A case with heterogeneous players}\label{sec:het}

We end our discussion with a numerically computed example with heterogeneous agents, assuming $R\in\mathcal{R}$ is of the form \eqref{eq:PRBR}. The numerical method is decribed in Appendix.
We consider players that differ in the initial location and cost, with  $x_0\in\{1,2\}$ and $c\in\{1,4\}$. Other model inputs are fixed to be $T=1$, $\sigma=0.25$ and $R(t,r)=1_{\{t\le T\}}15(1-r)^2$. Denote by $\beta$  the population terminal completion rate and by $\beta_{\mathbf{x}}$  the terminal completion rate of the players of type  $\mathbf{x}$ relative to its initial weight. Similarly, $V_{\mathbf{x}}$ refers to the game value for a player of type $\mathbf{x}$. We define the total welfare as
\[\text{Total welfare}=\int V(x_0,c) dm(x_0,c).\]
Let us  call   ``disadvantaged" (DA) the players  who start from a larger distance, or who have a higher cost of effort.
Analogously for ``advantaged" (AD) players.

\begin{table}[h]
\centering
\setlength\extrarowheight{3pt}
\scalebox{0.9}{
\begin{tabular}{|c|c|c|c|c|c|c|c|c|c|c|}\hline
Case & $m$ &\makebox[3.6em]{$\beta$}&\makebox[3.6em]{$\beta_{\text{AD}}$}&\makebox[3.6em]{$\beta_{\text{DA}}$}
&\makebox[3.6em]{$V_{\text{AD}}$}&\makebox[3.6em]{$V_{\text{DA}}$} & Total welfare \\\hline\hline
0&$\delta_{(1,1)}$ & 75.9\% & 75.9\% & - & 0.178 & - & 0.178 \\\hline\hline
1&$\frac{4}{5}\delta_{(1,1)}+\frac{1}{5}\delta_{(2,1)}$ & 73.8\% & 92.2\% & 0\% & 0.319 & 0 & 0.255 \\\hline
2&$\frac{3}{5}\delta_{(1,1)}+\frac{2}{5}\delta_{(2,1)}$ & 60\% & 100\% & 0\% & 1.701  & 0 & 1.020  \\\hline
3&$\frac{2}{5}\delta_{(1,1)}+\frac{3}{5}\delta_{(2,1)}$ & 49.8\%  & 100\%  & 16.4\%  & 4.276  & 0.022 & 1.724 \\\hline
4&$\frac{1}{5}\delta_{(1,1)}+\frac{4}{5}\delta_{(2,1)}$ & 49.8\%  & 100\%  & 37.3\%  & 7.338  & 0.058 & 1.514 \\\hline
5&$\delta_{(2,1)}$ & 49.8\% & - & 49.8\% &  - &  0.086 & 0.086 \\ \hline \hline
6&$\frac{4}{5}\delta_{(1,1)}+\frac{1}{5}\delta_{(1,4)}$ & 73.8\% & 92.2\% & 0.1\% & 0.319 & 0 & 0.255 \\\hline
7&$\frac{3}{5}\delta_{(1,1)}+\frac{2}{5}\delta_{(1,4)}$ & 60.4\% & 100\% & 0.9\% & 1.675  & 0.005 & 1.007 \\\hline
8&$\frac{2}{5}\delta_{(1,1)}+\frac{3}{5}\delta_{(1,4)}$ & 51.9\%  & 100\% & 19.8\% & 4.030  & 0.110  &  1.678  \\\hline
9&$\frac{1}{5}\delta_{(1,1)}+\frac{4}{5}\delta_{(1,4)}$ & 51.8\%  & 100\%  & 39.8\%  & 7.091 & 0.253 & 1.621 \\\hline
10&$\delta_{(1,4)}$ & 51.8\% & - & 51.8\% &  - & 0.365 & 0.365 \\\hline
\end{tabular}}
\medskip
\caption{Equilibrium completion rates and game values under different population composition.}
\label{tab:het}
\end{table}

Table~\ref{tab:het} shows that for those parameters, the following  happens:

\begin{itemize}
\item
  As the fraction of  DA  players in the population increases, the population completion rate decreases, but the completion rates of
the  DA group and of the AD group both increase. That is, the aggregate percentage of completions is lower because more of the population
is disadvantaged, and not because of working less hard. In fact, both the value of DA players and the value of AD players go up with the population fraction of DA players.
This is because DA players find it more advantageous to put in higher effort when the
fraction of AD players is lower,  due to weaker competition, so that  a higher percentage of DA players completes the task and enjoys a higher value.

 \item The aggregate welfare is not monotone in the percentage of DA players: it goes up with the fraction of DA players only  in the lower and mid  range of the DA fraction, unlike the groups' welfare that always go up. When the fraction of DA players becomes very high, the  total population welfare  starts decreasing.
This is the  result of two conflicting forces: quality (higher individual value) versus quantity (fewer AD individuals).

\end{itemize}

These results support two (not very surprising) empirical predictions: (i)
in low-growth industries, or in non-profit institutions we should not see tournament-based compensation, especially
when they have a high percentage of high-skilled employees;
   (ii) in undeveloped countries with large differences in access to good education, those with less access would give up early.

%In other words, while a society (or a firm) consisting of tournament players  may or may not benefit from a higher percentage of them having %higher skills or being more productive, individual players are better off when their peers are less efficient. This result hinges strongly on the %assumption of a fixed pie, which is approximately satisfied for competition of firms in low-growth industries, or competition of  employees
%in  low-growth firms or non-profit  institutions, or competition of employees in a country in which the profits are taken away by the state. The
%results indicate
%that individuals training for a future career in a socialized economy, or employees in  state-run or non-profit firms (including university %professors) would prefer their peers being less efficient.

To recap, a more efficient society (more AD workers) has  a higher productivity (larger $\beta$) and may have a higher total welfare, but it  makes, in this example,  the individuals worse off,  because the efficient workers tend to  work too hard and the inefficient workers tend to work too little.

Figure~\ref{fig:het3} shows the equilibrium  effort corresponding to Cases 6 and  9 in Table~\ref{tab:het}, varying the fraction of low cost vs high cost players.
(Other cases lead to similar observations.)
Comparing the effort level between the left two panels and the right two panels, we see that the low-cost players exert higher effort than the high-cost players. Their effort region is also larger. On the other hand, players also tend to play to their opponents' level. A comparison of the top left panel with the bottom left panel demonstrates that the effort peak of AD players is significantly lower when they face weaker competition. Similarly, DA players tend to raise their effort when they face stronger competition until the competition gets too strong for them to keep up.

\begin{figure}[t]
\centering
\includegraphics[height=5.5cm]{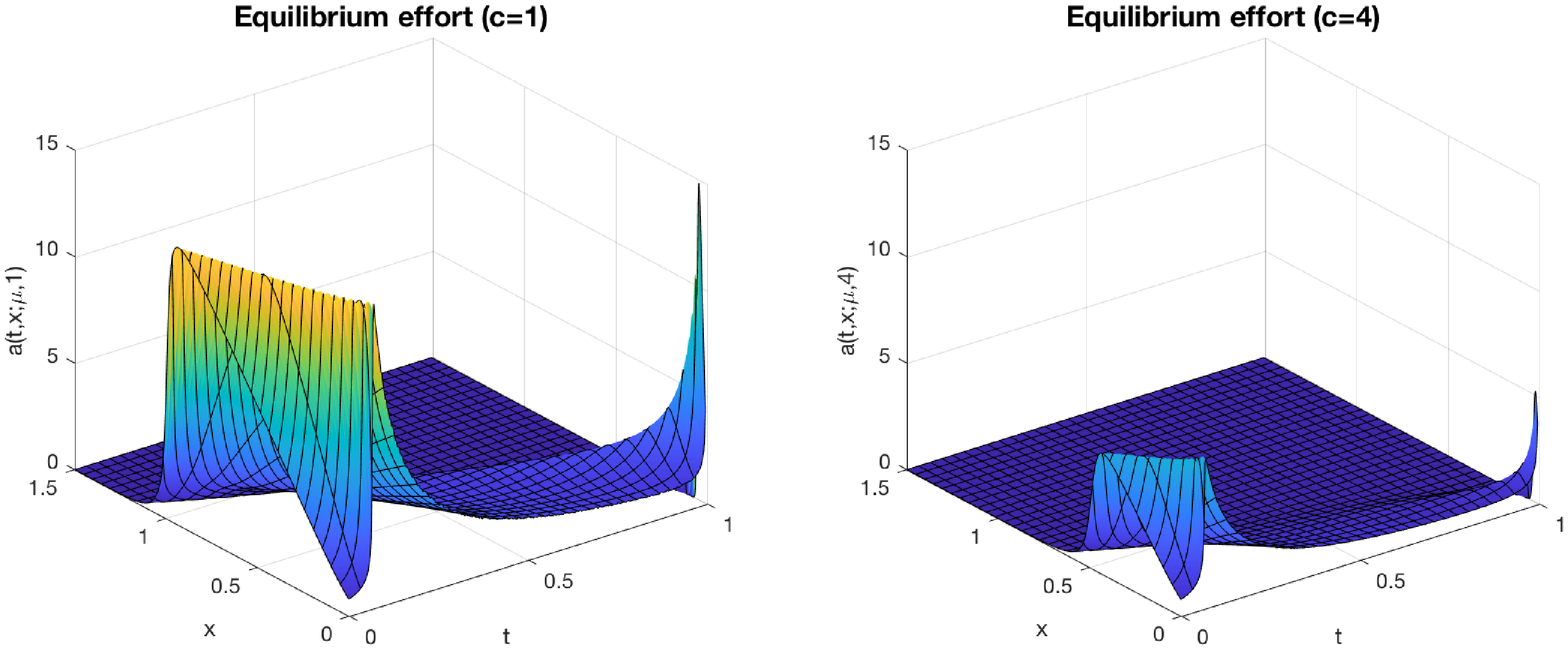}
\includegraphics[height=5.5cm]{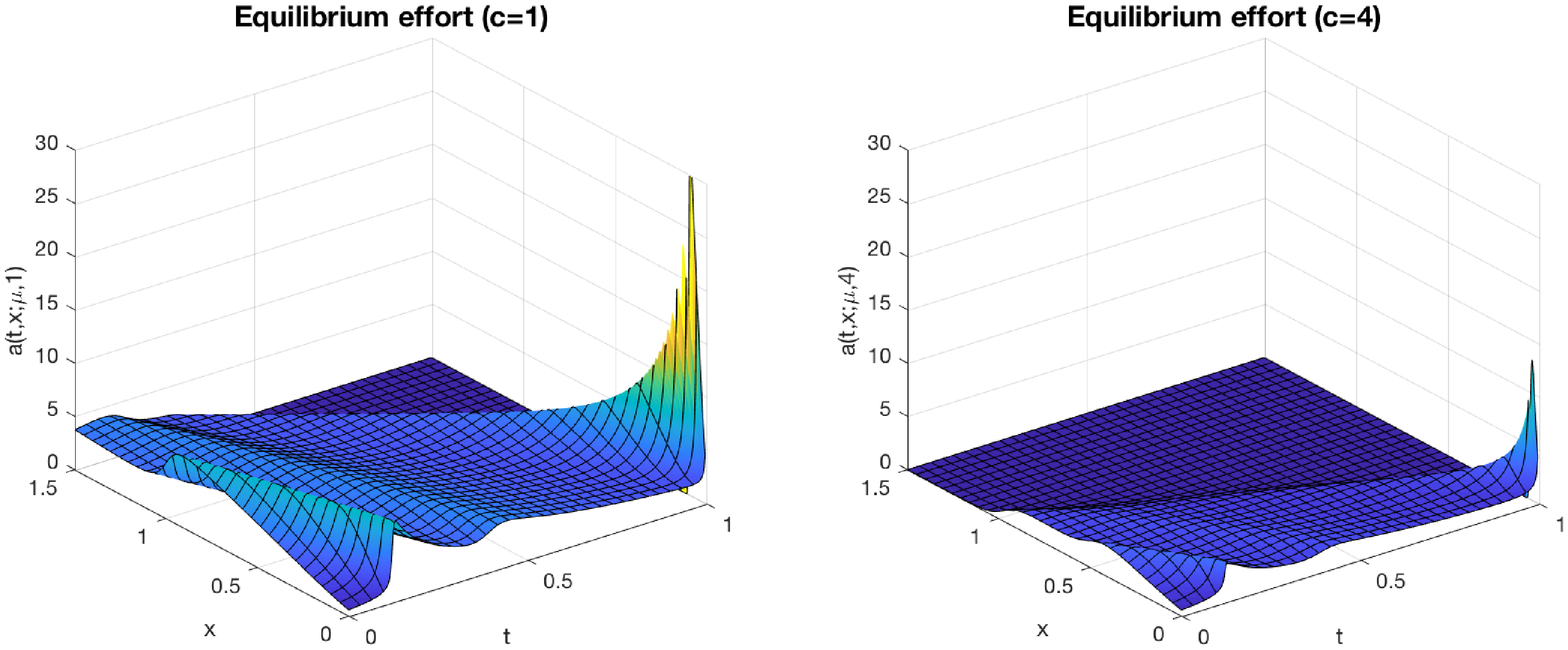}
\caption{Effort function in an equilibrium formed by 80\% low-cost players and 20\% high-cost players (top two panels) and 20\% low-cost players and 80\% high-cost players (bottom two panels). Everyone starts at $x_0=1$.}
\label{fig:het3}
\end{figure}

\section{Appendix}\label{app:A}

\subsection{Numerical method}\label{subsec:numm}

 When the players are heterogeneous, that is, when $m$ is not  degenerate, there is generally no explicit characterization of the equilibrium.
  Instead, we sketch in this section how to numerically solve the fixed point equation
\begin{equation}
F_\mu(t)=E\left[\int_0^t \frac{u(z,0;\mu,c)}{u(0,\xi;\mu,c)}f_{\tau^\circ_{\xi/\sigma}}(z)dz\right], \ t\in[0,T]
\end{equation}
 by discretizing the rank space, which is equivalent to discretizing the purely rank-based reward function.\footnote{There is an advantage of discretizing the rank variable instead of time variable: when $T$ is large, a  fine discretization of $[0,T]$ may result in a high computational burden. In contrast, the rank space $[0,1]$ remains fixed.} From the stability result (Theorem~\ref{thm:stability}), we know that the sequence of Nash equilibria of the discretized games, if converging, will converge weakly to a Nash equilibrium of the original game. Thus, we focus on piecewise constant reward functions:
\[R(t,r)=1_{\{t\le T\}}\left[\sum_{k=1}^{d} R_k 1_{[r_{k-1},r_k)}(r)+R_{d+1} 1_{[r_d,1]}\right]+1_{\{t>T\}}R_\infty \]
where $0=r_0<r_1<\cdots<r_{d}<1$ is a finite partition of the rank space, and $R_1\ge R_2\ge \cdots R_{d+1}\ge R_\infty$. Since such a reward function lies in $\mathcal{R}_D\cap \mathcal{R}_S$, we already know from Section~\ref{subsec:existence} that a Nash equilibrium exists, and that each equilibrium distribution $\mu$ can be represented by a vector $(T^\mu_1, \ldots, T^\mu_d)$, where $T^\mu_k$ is the $(r_k)$-quantile of $\mu$. As before, we set $T^
\mu_0:=0$, $T^\mu_{d+1}:=\Delta$.

Using the piecewise constant structure of $R_\mu(t)$,  for a freshly started game with $(t_0,\pi)=(0,0)$, the  fixed point equation for the c.d.f of $\mu$ becomes
\begin{equation}\label{FPEstepfun}
F_{\mu}(t)=E\left[ \frac{\sum_{k=1}^{d+1} \exp\left(\frac{R_k}{2c\sigma^2}\right) \left(F_{\tau^\circ_{\xi/\sigma}}(T^\mu_k\wedge t)-F_{\tau^\circ_{\xi/\sigma}}(T^\mu_{k-1}\wedge t)\right)}{u(0,\xi;\mu,c)}\right], \quad t\le T,
\end{equation}
where
\[u(0,\xi;\mu,c)=\sum_{k=1}^{d+1} \exp\left(\frac{R_k}{2c\sigma^2}\right)\left(F_{\tau^\circ_{\xi/\sigma}}(T^\mu_k\wedge T)-F_{\tau^\circ_{\xi/\sigma}}(T^\mu_{k-1}\wedge T)\right)+\exp\left(\frac{R_\infty}{2c\sigma^2}\right)\left(1-F_{\tau^\circ_{\xi/\sigma}}(T)\right).\]
In particular, for each $T^\mu_j\le T$, we have
\begin{align*}
r_j&=F_\mu(T^\mu_j)=E\left[ \frac{\sum_{k=1}^{j} \exp\left(\frac{R_k}{2c\sigma^2}\right) \left(F_{\tau^\circ_{\xi/\sigma}}(T^\mu_k)-F_{\tau^\circ_{\xi/\sigma}}(T^\mu_{k-1})\right)}{u(0,\xi;\mu,c)}\right].%\\
%&=E\left[ \frac{\sum_{k=1}^{j-1}\left(\exp\left(\frac{R_k}{2c\sigma^2}\right) -\exp\left(\frac{R_{k+1}}{2c\sigma^2}\right) \right)F_{\tau^\circ_{\xi/\sigma}}(T^\mu_k)+\exp\left(\frac{R_{j}}{2c\sigma^2}\right)F_{\tau^\circ_{\xi/\sigma}}(T^\mu_{j})}{\sum_{k=1}^{d+1} \exp\left(\frac{R_k}{2c\sigma^2}\right)\left(F_{\tau^\circ_{\xi/\sigma}}(T^\mu_k\wedge T)-F_{\tau^\circ_{\xi/\sigma}}(T^\mu_{k-1}\wedge T)\right)+\exp\left(\frac{R_\infty}{2c\sigma^2}\right)\left(1-F_{\tau^\circ_{\xi/\sigma}}(T)\right)}\right]
\end{align*}

To describe the numerical algorithm, we introduce the quantity\footnote{Note that $A_T$ does not depend on $\mu$, thus can be computed a priori.}
\begin{equation}\label{eq:A_T}
A_T:=E\left[ \frac{ \exp\left(\frac{R_1}{2c\sigma^2}\right)F_{\tau^\circ_{\xi/\sigma}}(T)}{\exp\left(\frac{R_1}{2c\sigma^2}\right)F_{\tau^\circ_{\xi/\sigma}}(T)+\exp\left(\frac{R_\infty}{2c\sigma^2}\right)\left(1-F_{\tau^\circ_{\xi/\sigma}}(T)\right)}\right].
\end{equation}

 We consider two cases.

Case 1. If $r_1>A_T$, then we must have $T^\mu_1>T$, otherwise we get a contradiction:
\begin{align*}
r_1>A_T&\ge E\left[ \frac{ \exp\left(\frac{R_1}{2c\sigma^2}\right)F_{\tau^\circ_{\xi/\sigma}}(T^\mu_1)}{\exp\left(\frac{R_1}{2c\sigma^2}\right)F_{\tau^\circ_{\xi/\sigma}}(T^\mu_1)+\exp\left(\frac{R_\infty}{2c\sigma^2}\right)\left(1-F_{\tau^\circ_{\xi/\sigma}}(T)\right)}\right]\\
&\ge E\left[ \frac{ \exp\left(\frac{R_1}{2c\sigma^2}\right)F_{\tau^\circ_{\xi/\sigma}}(T^\mu_1)}{u(0,\xi;\mu,c)}\right]=r_1.
\end{align*}
In this case, $(\Delta, \ldots, \Delta)$ is the unique equilibrium quantile, that is, no one finishes the project.

Case 2. If $r_1\le A_T$, then we must have $T^\mu_1\le T$, since otherwise $T^\mu_k=\Delta$ for all $k=1, \ldots, d+1$ and $F_\mu(T)=A_T\ge r_1$, yielding a contradiction: $T\ge T^\mu_1=\Delta$. In this case, let $k_0=\max\{k: T^\mu_k\le T\}\in\{1,\ldots, d\}$. Then, we write $u$ as
\begin{equation}\label{FPEu}
\begin{aligned}
u(0,\xi;T^\mu_{[1:k_0]},c)&=\sum_{k=1}^{k_0} \exp\left(\frac{R_k}{2c\sigma^2}\right)\left(F_{\tau^\circ_{\xi/\sigma}}(T^\mu_k)-F_{\tau^\circ_{\xi/\sigma}}(T^\mu_{k-1})\right)\\
&\quad +\exp\left(\frac{R_{k_0+1}}{2c\sigma^2}\right)\left(F_{\tau^\circ_{\xi/\sigma}}(T)-F_{\tau^\circ_{\xi/\sigma}}(T^\mu_{k_0})\right)+\exp\left(\frac{R_\infty}{2c\sigma^2}\right)\left(1-F_{\tau^\circ_{\xi/\sigma}}(T)\right)\\
&=\sum_{k=1}^{k_0}\left(\exp\left(\frac{R_k}{2c\sigma^2}\right) -\exp\left(\frac{R_{k+1}}{2c\sigma^2}\right) \right)F_{\tau^\circ_{\xi/\sigma}}(T^\mu_k)\\
&\quad+\exp\left(\frac{R_{k_0+1}}{2c\sigma^2}\right)F_{\tau^\circ_{\xi/\sigma}}(T)+\exp\left(\frac{R_\infty}{2c\sigma^2}\right)\left(1-F_{\tau^\circ_{\xi/\sigma}}(T)\right).
\end{aligned}
\end{equation}
Here,  we have replaced $\mu$ by $T^\mu_{[1:k_0]}$ to indicate that $u$ depends on $\mu$ only through the first $k_0$ quantiles. The fixed point equations can be rewritten then as
\begin{equation}\label{FPE}
r_{k}-r_{k-1}=E\left[ \frac{\exp\left(\frac{R_{k}}{2c\sigma^2}\right) \left(F_{\tau^\circ_{\xi/\sigma}}(T^\mu_{k})-F_{\tau^\circ_{\xi/\sigma}}(T^\mu_{k-1})\right)}{u(0,\xi;T^\mu_{[1:k_0]},c)}\right], \quad k=1, \ldots, k_0.
\end{equation}
This is a system of nonlinear equations in $T^\mu_1, \ldots, T^\mu_{k_0}$, where $k_0$ is determined by the condition that $T^\mu_{k_0}\le T<T^\mu_{k_0+1}$, or equivalently, $0\le F_\mu(T)-F_{\mu}(T^\mu_{k_0})<r_{k_0+1}-r_{k_0}$ (where we set $r_{d+1}:=1$). By \eqref{FPEstepfun}, this is equivalent to
\begin{equation}\label{eq:k0}
0\le E\left[\frac{ \exp\left(\frac{R_{k_0+1}}{2c\sigma^2}\right) \left(F_{\tau^\circ_{\xi/\sigma}}(T)-F_{\tau^\circ_{\xi/\sigma}}(T^\mu_{k_0})\right)}{u(0,\xi;T^\mu_{[1:k_0]},c)}\right]<r_{k_0+1}-r_{k_0}.
\end{equation}

The procedure can be summarized as follows:
\begin{tcolorbox}
-- If $T<\infty$, compute $A_T$ defined by \eqref{eq:A_T}.
\begin{itemize}
\item If $A_T<r_1$, then the unique equilibrium quantile vector is given by $(\Delta, \ldots, \Delta)$.
\item If $A_T\ge r_1$, solve the nonlinear system \eqref{FPE} for $(T^\mu_{1}, \ldots, T^\mu_{k_0})$, where $u$ is given by \eqref{FPEu}, and   $k_0$ is determined by \eqref{eq:k0}. The  equilibrium quantile vector is given by $(T^\mu_{1}, \ldots, T^\mu_{k_0}, \Delta, \ldots, \Delta)$.
\end{itemize}
-- If $T=\infty$, solve \eqref{FPEu} and \eqref{FPE} with $k_0=d$.
\end{tcolorbox}
%When $m$ is finitely supported with $m(\{(x_i,c_i)\})=p_i$, $i\in I$, \eqref{FPE} and \eqref{FPEu} become
%\begin{equation*}\label{eq:rk}
%r_k-r_{k-1}=\sum_{i\in I} \frac{p_i}{z_i} \exp\left(\frac{R_k}{2c_i\sigma^2}\right) \left(F_{\tau^\circ_{x_i/\sigma}}(T^\mu_k)-F_{\tau^\circ_{x_i/\sigma}}(T^\mu_{k-1})\right), \quad k=1,\ldots, k_0,
%\end{equation*}
%and
%\begin{equation*}\label{eq:zi}
%\begin{aligned}
%z_i:&=u(0,x_i;T^\mu_{[1:k_0]},c_i)\\
%&=\sum_{k=1}^{k_0}\left(\exp\left(\frac{R_k}{2c_i\sigma^2}\right) -\exp\left(\frac{R_{k+1}}{2c_i\sigma^2}\right) \right)F_{\tau^\circ_{x_i/\sigma}}(T^\mu_k)\\
%&\quad+\exp\left(\frac{R_{k_0+1}}{2c_i\sigma^2}\right)F_{\tau^\circ_{x_i/\sigma}}(T)+\exp\left(\frac{R_\infty}{2c_i\sigma^2}\right)\left(1-F_{\tau^\circ_{x_i/\sigma}}(T)\right) , \quad i\in I.
%\end{aligned}
%\end{equation*}

\subsection{Proof of Proposition \ref{prop:homcs}}

(i) Let $\psi(y):=\phi^{-1}(y/(1-y))$ where $\phi$ is defined in \eqref{eq:phi}. We have $\beta=\psi(F_{\tau^\circ_{x_0/\sigma}}(T))$. Since $y\mapsto \psi(y)$ is strictly increasing, the strictly monotonicity of $\beta$ in $T$ and $x_0$ follows from that of $F_{\tau^\circ_{x_0/\sigma}}(T)$. Now, fix $T$ and $x_0$. Suppose $c_1\le c_2$, then $\phi(r;c_1)\le \phi(r,c_2)$ for all $r\in[0,1]$, and thus
\[\phi(\beta(c_1);c_1)=\frac{F_{\tau^\circ_{x_0/\sigma}}(T)}{1-F_{\tau^\circ_{x_0/\sigma}}(T)}=\phi(\beta(c_2);c_2)\ge \phi(\beta(c_2);c_1).\]
Since $\phi(\cdot;c_1)$ is increasing, we get $\beta(c_1)\ge \beta(c_2)$.
All the limits follow from straightforward computations using \eqref{eq:BMFPTcdf} and \eqref{eq:NErate}.

(ii) Recall \eqref{eq:NErate}:
\begin{equation}\label{eq:beta}
\frac{(1-\beta) F_{\tau^\circ_{x_0/\sigma}}(T)}{1-F_{\tau^\circ_{x_0/\sigma}}(T)}=\int_0^{\beta} \exp\left(\frac{R_\infty-H(z)}{2c\sigma^2}\right)dz.
\end{equation}
Let $0<T_1\le T_2<\infty$. Since $\beta(T_1)\le \beta(T_2)$ and $H\ge R_\infty$, we have
\begin{align*}
\beta(T_2)-\beta(T_1)&\ge \int_{\beta(T_1)}^{\beta(T_2)}\exp\left(\frac{R_\infty-H(z)}{2c\sigma^2}\right)dz\\
&=\frac{(1-\beta(T_2)) F_{\tau^\circ_{x_0/\sigma}}(T_2)}{1-F_{\tau^\circ_{x_0/\sigma}}(T_2)}-\frac{(1-\beta(T_1)) F_{\tau^\circ_{x_0/\sigma}}(T_1)}{1-F_{\tau^\circ_{x_0/\sigma}}(T_1)}\\
&=\frac{\left(F_{\tau^\circ_{x_0/\sigma}}(T_2)-F_{\tau^\circ_{x_0/\sigma}}(T_1)\right)(1-\beta(T_2))}{\left(1-F_{\tau^\circ_{x_0/\sigma}}(T_2)\right)\left(1-F_{\tau^\circ_{x_0/\sigma}}(T_1)\right)}+(\beta(T_1)-\beta(T_2))\frac{F_{\tau^\circ_{x_0/\sigma}}(T_1)}{1-F_{\tau^\circ_{x_0/\sigma}}(T_1)}.
\end{align*}
This implies
\begin{equation*}\label{eq:dbeta_dT}
\beta(T_2)-\beta(T_1)\ge \frac{\left(F_{\tau^\circ_{x_0/\sigma}}(T_2)-F_{\tau^\circ_{x_0/\sigma}}(T_1)\right)(1-\beta(T_2))}{\left(1-F_{\tau^\circ_{x_0/\sigma}}(T_2)\right)}.
\end{equation*}
Let $\zeta:=(1-F_{\tau^\circ_{x_0/\sigma}}(T))/(1-\beta)$. We then have
\begin{align*}
\zeta(T_2)-\zeta(T_1)=\frac{\left(1-F_{\tau^\circ_{x_0/\sigma}}(T_2)\right)(\beta(T_2)-\beta(T_1))-(1-\beta(T_2))\left(F_{\tau^\circ_{x_0/\sigma}}(T_2)-F_{\tau^\circ_{x_0/\sigma}}(T_1)\right)}{(1-\beta(T_2))(1-\beta(T_1))}\ge 0.
\end{align*}
It follows that $\zeta$, and hence $V=R_\infty+2c\sigma^2 \ln \zeta$, is increasing in $T$. As $T\rightarrow 0$, we have $\beta \rightarrow 0$, $\zeta\rightarrow 1$ and $V\rightarrow R_\infty$. As $T\rightarrow \infty$, we have $\beta \rightarrow 1$, $\zeta\rightarrow \left(\int_0^1 \exp\left(\frac{R_\infty-H(z)}{2c\sigma^2}\right)\right)^{-1}$ (by \eqref{eq:beta}) and $V\rightarrow V_\infty$.

Similarly, let $x_1\le x_2$, \eqref{eq:beta}, together with $\beta(x_1)\ge \beta(x_2)$, implies
\begin{equation*}\label{eq:dbeta_dx0}
\beta(x_{1})-\beta(x_{2})\ge  \frac{\left(F_{\tau^\circ_{x_1/\sigma}}(T)-F_{\tau^\circ_{x_2/\sigma}}(T)\right)(1-\beta(x_2))}{\left(1-F_{\tau^\circ_{x_2/\sigma}}(T)\right)},
\end{equation*}
and
\[\zeta(x_1)-\zeta(x_2)=\frac{\left(1-F_{\tau^\circ_{x_2/\sigma}}(T)\right)(\beta(x_1)-\beta(x_2))-(1-\beta(x_2))\left(F_{\tau^\circ_{x_1/\sigma}}(T)-F_{\tau^\circ_{x_2/\sigma}}(T)\right)}{(1-\beta(x_2))(1-\beta(x_1))}\ge 0.\]
 Thus, $\zeta$ and $V$ are decreasing in $x_0$. As $x_0\rightarrow 0$, we have $\beta \rightarrow 1$, $\zeta\rightarrow \left(\int_0^1 \exp\left(\frac{R_\infty-H(z)}{2c\sigma^2}\right)\right)^{-1}$ and $V\rightarrow V_\infty$. As $x_0\rightarrow \infty$, we have $\beta \rightarrow 0$, $\zeta\rightarrow 1$ and $V\rightarrow R_\infty$.

(iii) Independence of $x_0$ is direct from \eqref{eq:infNEvalue}. Differentiating \eqref{eq:infNEvalue} with respect to $c$, we get
\begin{align*}
\frac{\partial V_\infty}{\partial c}&=\frac{-2\sigma^2 \int_0^1 \exp\left(-\frac{H(z)}{2c\sigma^2}\right)dz \ln \left(\int_0^1 \exp\left(-\frac{H(z)}{2c\sigma^2}\right)dz\right)-2c\sigma^2\int_0^1 \exp\left(-\frac{H(z)}{2c\sigma^2}\right) \frac{H(z)}{2c^2\sigma^2}dz}{\int_0^1 \exp\left(-\frac{H(z)}{2c\sigma^2}\right)dz}\\
&=\frac{-2\sigma^2 \int_0^1 \exp\left(-\frac{H(z)}{2c\sigma^2}\right) \left\{\ln\left(\int_0^1 \exp\left(-\frac{H(r)}{2c\sigma^2}\right)dr\right)+ \frac{H(z)}{2c\sigma^2} \right\} dz}{\int_0^1 \exp\left(-\frac{H(z)}{2c\sigma^2}\right)dz}\\
&=\frac{-2\sigma^2 \int_0^1 h(z) \ln \left(\frac{\int_0^1 h(r)dr}{h(z)}\right) dz}{\int_0^1 h(z) dz}, \quad \text{where}\quad h(z)=\exp\left(-\frac{H(z)}{2c\sigma^2}\right).
\end{align*}
Using $\ln x\le x-1$ for all $x>0$, we have
\[\int_0^1 h(z) \ln \left(\frac{\int_0^1 h(r)dr}{h(z)}\right) dz\le \int_0^1 h(z) \left(\frac{\int_0^1 h(r)dr}{h(z)}-1\right) dz=0.\]
It follows that $\partial V_\infty/\partial c\ge 0$. To compute the limit in $c$, we first observe that
\begin{equation*}
H(1-) \le V_\infty \le E H(F_\mu(\tau))=\int_0^\infty H(F_\mu(t)) d\mu(t)=\int_0^1 H(r)dr.
\end{equation*}
So $\lim_{c\rightarrow 0} V_\infty$ and $\lim_{c\rightarrow \infty}V_\infty$ exist. To compute $\lim_{c\rightarrow 0} V_\infty$, we fix $\eps>0$. There exists a $\delta>0$ (depending on $\eps$) such that $H(z)<H(1-)+\eps$ for all $z\in (1-\delta,1)$. We have
\begin{align*}
H(1-) \le\lim_{c\rightarrow 0} V_\infty&=H(1-)- \lim_{c\rightarrow 0} 2c\sigma^2 \ln\left(\int_0^1 \exp\left(\frac{H(1-)-H(z)}{2c\sigma^2}\right)dz\right)\\
&\le H(1-)- \lim_{c\rightarrow 0} 2c\sigma^2 \ln\left(\int_{1-\delta}^1 \exp\left(\frac{-\eps}{2c\sigma^2}\right)dz\right)\\
&= H(1-)- \lim_{c\rightarrow 0} 2c\sigma^2 \left(\ln\delta -\frac{\eps}{2c\sigma^2}\right)= H(1-)+\eps.
\end{align*}
Since $\eps$ is arbitrary, we conclude that $\lim_{c\rightarrow 0} V_\infty= H(1-)$. To compute $\lim_{c\rightarrow \infty} V_\infty$, we use L'H\^opital's rule:
\begin{align*}
\lim_{c\rightarrow \infty} V_\infty&=\lim_{c\rightarrow \infty}  -2c\sigma^2 \ln\left(\int_0^1 \exp\left(\frac{-H(z)}{2c\sigma^2}\right)dz\right)\\
&=\lim_{c\rightarrow \infty} \frac{\int_0^1 \exp\left(\frac{-H(z)}{2c\sigma^2}\right) H(z) dz}{\int_0^1 \exp\left(\frac{-H(z)}{2c\sigma^2}\right)dz}=\int_0^1 H(z)dz.
\end{align*}

(iv) It is obvious from \eqref{eq:infNEvalue} that $V_\infty(H_1)\ge V_\infty(H_2)$ if $H_1\ge H_2$. When $T<\infty$ and $H_1-R_{1,\infty}\ge H_2-R_{2,\infty}$, we have $\phi(r;R_1)\le  \phi(r;R_2)$ ($\phi$ is defined in \eqref{eq:phi}), which implies
$\phi(\beta(R_1);R_1)=\phi(\beta(R_2);R_2)\ge  \phi(\beta(R_2);R_1).$
Since $\phi(\cdot;R_1)$ is increasing, we must have $\beta(R_1)\ge \beta(R_2)$, and consequently, $V(R_1)-R_{1,\infty}\ge V(R_2)-R_{2,\infty}$ by \eqref{eq:NEvalue}.

\subsection{Proofs of Theorems \ref{thm:optR}, \ref{thm:optRT} and \ref{thm:welfare}}

We first solve an auxiliary problem. Define
\begin{equation}\label{eq:JK}
\mathcal{J}(h):=\int_0^\alpha h(r)dr, \quad \mathcal{K}(h):=\int_0^\alpha -\ln h(r) dr.
\end{equation}
and
\begin{equation}\label{eq:feasibleh}
\mathfrak{h}:=\left\{h: [0,\alpha]\rightarrow \bbR: \text{$h$ is increasing}, 0<h\le \exp\left(-\frac{R_\infty}{2c\sigma^2}\right), \mathcal{K}(h)\le \frac{K-R_\infty(1-\alpha)}{2c\sigma^2} \right\}.
\end{equation}
\begin{lemma}\label{lemma:hstar}
Let $\alpha\in(0,1]$ and $K\ge R_\infty$. Then $\inf_{h\in \mathfrak{h}}\mathcal{J}(h)$ is uniquely attained (up to a.e.\ equivalence)  at
\[h^*\equiv \exp\left(-\frac{K-R_\infty(1-\alpha)}{2\alpha c\sigma^2}\right).\]
\end{lemma}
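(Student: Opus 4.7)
The plan is to recognize that this is a classical optimization problem solvable by Jensen's inequality applied to the concave function $\ln$. Observe that $\mathcal{K}(h)$ involves $-\ln h$ and $\mathcal{J}(h)$ involves $h$, so Jensen's inequality applied to $\ln$ with respect to the uniform measure on $[0,\alpha]$ directly connects the two functionals:
\begin{equation*}
\frac{1}{\alpha}\int_0^\alpha \ln h(r)\,dr \;\le\; \ln\!\left(\frac{1}{\alpha}\int_0^\alpha h(r)\,dr\right),
\end{equation*}
which rearranges to $\mathcal{J}(h) \ge \alpha \exp\!\left(-\mathcal{K}(h)/\alpha\right)$. Combining this with the feasibility constraint $\mathcal{K}(h)\le (K-R_\infty(1-\alpha))/(2c\sigma^2)$ yields
\begin{equation*}
\mathcal{J}(h)\;\ge\; \alpha\exp\!\left(-\frac{K-R_\infty(1-\alpha)}{2\alpha c\sigma^2}\right) \;=\; \alpha h^*,
\end{equation*}
which is the desired lower bound.

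Next, I would verify that $h^*$ itself belongs to $\mathfrak{h}$: it is a positive constant (hence weakly increasing), it clearly satisfies $\mathcal{K}(h^*) = (K-R_\infty(1-\alpha))/(2c\sigma^2)$ with equality, and the upper bound $h^*\le \exp(-R_\infty/(2c\sigma^2))$ is equivalent after taking logs to $K \ge R_\infty$, which is part of the hypothesis. Since $\mathcal{J}(h^*) = \alpha h^*$ matches the lower bound, $h^*$ attains the infimum.

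For the uniqueness claim, I would exploit the strict concavity of $\ln$: Jensen's inequality above holds with equality if and only if $h$ is a.e.\ equal to a constant $c_0$ on $[0,\alpha]$. For such a constant minimizer, $\mathcal{J}(h)=\alpha c_0$ is minimized by choosing $c_0$ as small as possible, but the constraint $\mathcal{K}(h)=-\alpha\ln c_0 \le (K-R_\infty(1-\alpha))/(2c\sigma^2)$ forces $c_0 \ge h^*$, so necessarily $c_0 = h^*$ a.e. Any minimizer that does not achieve equality in Jensen's inequality produces $\mathcal{J}(h)>\alpha h^*$, hence cannot be optimal.

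I do not foresee a serious obstacle: the argument is a one-line Jensen's inequality followed by checking feasibility and the equality condition. The only thing to be careful about is tracking the constant $(K-R_\infty(1-\alpha))/(2\alpha c\sigma^2)$ through the computation and confirming that $h^*\in\mathfrak{h}$ uses exactly the hypothesis $K\ge R_\infty$, which explains why that assumption is imposed.
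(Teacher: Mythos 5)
Your proof is correct. The attainment argument is exactly the paper's: Jensen's inequality for $\ln$ gives $\mathcal{J}(h)\ge \alpha\exp(-\mathcal{K}(h)/\alpha)$, and the budget constraint then yields $\mathcal{J}(h)\ge \mathcal{J}(h^*)$. Your uniqueness argument, however, takes a slightly different route from the paper's. You invoke the equality condition in Jensen's inequality: any minimizer must make both inequalities in the chain tight, and equality in Jensen for the strictly concave $\ln$ forces $h$ to be a.e.\ constant on $[0,\alpha]$, after which the constant is pinned down to $h^*$. The paper instead argues via convexity of the feasible set: if $h_1,h_2$ are both minimizers, every convex combination is a minimizer; every minimizer must saturate the constraint $\mathcal{K}(h)=(K-R_\infty(1-\alpha))/(2c\sigma^2)$ (else one could scale down), so $\mathcal{K}$ is affine along the segment, contradicting the strict convexity of $z\mapsto -\ln z$ unless $h_1=h_2$ a.e. Both arguments ultimately rest on the strict concavity of the logarithm; yours is arguably more direct since it reuses the same Jensen inequality already established for the lower bound, while the paper's is a generic argument for linear objectives over convex sets cut out by a strictly convex constraint. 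You also explicitly verify $h^*\in\mathfrak{h}$ and that the upper bound $h^*\le \exp(-R_\infty/(2c\sigma^2))$ is precisely the hypothesis $K\ge R_\infty$, a check the paper leaves implicit; this is a worthwhile addition.
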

\begin{proof}
We first show uniqueness. Suppose $h_1, h_2\in\mathfrak{h}$ both minimize $\mathcal{J}$. Then, since $\mathcal{J}$ is linear and $\mathfrak{h}$ is convex, $h_\eps:=(1-\eps) h_1+\eps h_2$ is optimal for any $\eps\in[0,1]$. Note that any optimizer $h$ necessarily satisfies
\[\mathcal{K}(h)=\frac{K-R_\infty(1-\alpha)}{2c\sigma^2},\]
otherwise we can find $\lambda\in(0,1)$ such that $\lambda h\in \mathfrak{h}$ and $\mathcal{J}(\lambda h)<\mathcal{J}(h)$. This implies $\mathcal{K}(h_\eps)= (1-\eps)\mathcal{K}(h_1)+\eps \mathcal{K}(h_2)$ for any $\eps\in[0,1]$. However, by the strict convexity of $z\mapsto -\ln z$ (and thus of $\mathcal{K}$), this can only happen when $h_1=h_2$ a.e.\ on $[0,\alpha]$.

Next, we show $h^*$ is an optimizer. Let $h\in\mathfrak{h}$ be arbitrary. We have by Jensen's inequality that
\[-\frac{1}{\alpha}\mathcal{K}(h)=\frac{1}{\alpha}\int_0^\alpha \ln h(r)dr\le \ln \left(\frac{1}{\alpha} \int_0^\alpha h(r)dr\right).\]
This implies
\[\mathcal{J}(h)\ge \alpha\exp\left(-\frac{1}{\alpha}\mathcal{K}(h)\right)\ge \alpha\exp\left(-\frac{K-R_\infty(1-\alpha)}{2\alpha c\sigma^2}\right)=\mathcal{J}(h^*).\]
\end{proof}

\noindent{\bf Proof of Theorem \ref{thm:optR}.}
By Theorem~\ref{thm:explicit_soln}(ii), we have
\[T_\alpha(H)=F^{-1}_{\tau^\circ_{x_0/\sigma}}\left(\frac{\int_0^\alpha  \exp\left(-\frac{H(r)}{2c\sigma^2}\right) dr}{\int_0^1\exp\left(-\frac{H(r)}{2c\sigma^2}\right) dr}\right).\]
First of all, it is clear  from the above expression that one should only pay $R_\infty$ beyond rank $\alpha$, because $H1_{[0,\alpha]}+R_\infty1_{(\alpha,1]}$ performs no worse than $H$. Assuming $H=R_\infty$ on $(\alpha,1]$, we further let $h(r):=\exp\left(-\frac{H(r)}{2c\sigma^2}\right)$ and write
\[T_\alpha(H)=F^{-1}_{\tau^\circ_{x_0/\sigma}}\left(\frac{\int_0^\alpha h(r )dr}{\int_0^\alpha h(r)dr+(1-\alpha)\exp\left(-\frac{R_\infty}{2c\sigma^2}\right)}\right).\]
Then, to minimize $T_\alpha(H)$, it suffices  to minimize $\mathcal{J}(h)$. Moreover, the feasibility constraint $H\in\mathcal{H}$ precisely translates to $h\in\mathfrak{h}$, under the assumption that $H=R_\infty$ on $(\alpha,1]$. Thus, the solution to our minimum quantile problem is in one-to-one correspondence to the solution to our auxiliary problem, given by Lemma~\ref{lemma:hstar}.
All the statements of the theorem can now be derived in a straightforward manner.

\noindent{\bf Proof of Theorem \ref{thm:optRT}.}
By Theorem~\ref{thm:explicit_soln}(i), we have
\[T_\alpha(H;T)=\begin{cases}
F^{-1}_{\tau^\circ_{x_0/\sigma}}\left(\frac{1-F_{\tau^\circ_{x_0/\sigma}}(T)}{1-\beta(H)}\int_0^\alpha \exp\left(\frac{R_\infty-H(r)}{2c\sigma^2}\right)dr\right)\le T, & \alpha\le \beta(H), \\
\Delta, & \alpha>\beta(H),
\end{cases}\]
where $\beta(H)=F_{\mathcal{E}_T(H)}(T)\in (0,1)$ is the unique solution of
\[\frac{F_{\tau^\circ_{x_0/\sigma}}(T)}{1-F_{\tau^\circ_{x_0/\sigma}}(T)}=\frac{1}{1-z}\int_0^{z} \exp\left(\frac{R_\infty-H(r)}{2c\sigma^2}\right)dr=\phi(z;H).\]

{\bf CLAIM:} The optimal value does not change if we restrict ourselves to reward function $H$ which satisfies $H(r)=R_\infty$ for all $r>\alpha$.

To prove the claim,
let $H\in\mathcal{H}^\alpha_T$ and define $\hat H:=H1_{[0,\alpha]}+R_\infty 1_{(\alpha,1]}$. We first check the feasibility of $\hat H$. Since $\phi(\beta(\hat H);\hat H)=\phi(\beta(H);H)\le \phi(\beta(H);\hat H)$ and $\phi(z;\hat H)$ is increasing, we must have $\beta(\hat H)\le \beta(H)$. Using this, together with $H\ge \hat H\ge R_\infty$, we get
\[\int_0^{\beta(\hat H)} \hat H(r) dr+(1-\beta(\hat H))R_\infty\le \int_0^{\beta(H)} H(r)dr+(1-\beta(H))R_\infty \le K.\]
Thus, $\hat H$ also satisfies the budget constraint. Since $\beta(H)\ge\alpha$, we have by the monotonicity of $\phi$ that
\[\phi(\beta(\hat H);\hat H)=\phi(\beta(H);H)\ge \phi(\alpha;H)=\phi(\alpha;\hat H)\]
and thus, $\beta(\hat H)\ge \alpha$. So we conclude that $\hat H\in \mathcal{H}^\alpha_T$. Next, we have
\begin{align*}
F_{\tau^\circ_{x_0/\sigma}}(T_\alpha(\hat H;T))&=\frac{1-F_{\tau^\circ_{x_0/\sigma}}(T)}{1-\beta(\hat H)}\int_0^\alpha \exp\left(\frac{R_\infty-\hat H(r)}{2c\sigma^2}\right)dr\\
&=\frac{1-F_{\tau^\circ_{x_0/\sigma}}(T)}{1-\beta(\hat H)}\int_0^\alpha \exp\left(\frac{R_\infty- H(r)}{2c\sigma^2}\right)dr\\
&\le \frac{1-F_{\tau^\circ_{x_0/\sigma}}(T)}{1-\beta(H)}\int_0^\alpha \exp\left(\frac{R_\infty-H(r)}{2c\sigma^2}\right)dr=F_{\tau^\circ_{x_0/\sigma}}(T_\alpha(H;T)),
\end{align*}
which is equivalent to $T_\alpha(\hat H;T)\le T_\alpha(H;T)$, and the claim has been verified.

Its immediate consequence  is that if $\beta(H)\ge\alpha$ and $H(r)=R_\infty$ for all $r>\alpha$, then
\[ \int_0^{\beta(H)} H(r) dr+(1-\beta(H))R_\infty= \int_0^{1} H(r) dr,\]
which implies
\[\mathcal{H}^\alpha_T\cap \{H|_{(\alpha,1]}\equiv R_\infty\}=\mathcal{H}\cap\{\beta(H)\ge \alpha\}\cap \{H|_{(\alpha,1]}\equiv R_\infty\}.\]
Thus, we can work with the simpler feasible set $\mathcal{H}$ instead of the equilibrium-dependent $\mathcal{H}^\alpha_T$.

As in the $T=\infty$ case, we let $h(r):=\exp\left(-\frac{H(r)}{2c\sigma^2}\right)$. In view of the claim above, we can set $h(r)\equiv\exp\left(-\frac{R_\infty}{2c\sigma^2}\right)$ for $r\in(\alpha,1]$ and only search for the optimal $h$ on $[0,\alpha]$. With a slight abuse of notation, we also use $\beta(h)$ to denote the unique solution $z$ in $(0,1)$ of
\begin{equation}\label{eq:betah}
C_T:=\frac{F_{\tau^\circ_{x_0/\sigma}}(T)}{1-F_{\tau^\circ_{x_0/\sigma}}(T)}=\frac{1}{1-z} \left(\int_0^{z\wedge \alpha} \exp\left(\frac{R_\infty}{2c\sigma^2}\right) h(r)dr+(z-z\wedge \alpha)\right).
\end{equation}
The feasibility condition translates to $h\in \mathfrak{h}_T:=\mathfrak{h}\cap\{\beta(h)\ge\alpha\}$ where $\mathfrak{h}$ is defined in \eqref{eq:feasibleh}.
Equation \eqref{eq:betah} implies that $\beta(h)\ge \alpha$ if and only if
\[\mathcal{J}(h)=\int_0^\alpha h(r)dr\le C_T(1-\alpha)\exp\left(-\frac{R_\infty}{2c\sigma^2}\right).\]
By Lemma~\ref{lemma:hstar},
\[\inf_{h\in\mathfrak{h}}\mathcal{J}(h)=\mathcal{J}(h^*)=\alpha \exp\left(-\frac{K-(1-\alpha)R_\infty}{2\alpha c\sigma^2}\right).\]
Therefore, we arrived at the following feasibility criteria:
\begin{align*}
\mathfrak{h}_T\neq\emptyset &\Longleftrightarrow \beta(h^*)\ge \alpha\Longleftrightarrow \mathcal{J}(h^*)\le C_T(1-\alpha)\exp\left(-\frac{R_\infty}{2c\sigma^2}\right)\\
%&\Longleftrightarrow \frac{\alpha}{1-\alpha} \le C_T \exp\left(\frac{K- R_\infty}{2\alpha c\sigma^2}\right)\\
&\Longleftrightarrow T \ge F^{-1}_{\tau^\circ_{x_0/\sigma}}\left(\frac{\alpha }{\alpha+(1-\alpha)\exp\left(\frac{K-R_\infty}{2\alpha c\sigma^2}\right)}\right)=T^*_\alpha.
\end{align*}
If $\mathfrak{h}_T=\emptyset $, then there is no feasible reward scheme which attains, in equilibrium, the desired completion rate of $\alpha$ by time $T$.
Suppose $h\in\mathfrak{h}_T\neq\emptyset$. Then
\begin{align*}
F_{\tau^\circ_{x_0/\sigma}}(T_\alpha(H;T))=\left(1-F_{\tau^\circ_{x_0/\sigma}}(T)\right)\exp\left(\frac{R_\infty}{2c\sigma^2}\right) \frac{\int_0^\alpha h(r)dr}{1-\beta(h)}.
\end{align*}
Thus, to minimize $T_\alpha(H;T)$, we only need to minimize
\[\mathcal{L}(h):=\frac{\mathcal{J}(h)}{1-\beta(h)}\exp\left(\frac{R_\infty}{2c\sigma^2}\right), \quad h\in\mathfrak{h}_T.\]
From \eqref{eq:betah}, we also obtain
\[\beta(h)=\frac{C_T+\alpha-\exp\left(\frac{R_\infty}{2c\sigma^2}\right)\mathcal{J}(h)}{1+C_T}, \quad h\in\mathfrak{h}_T.\]
%\[\mathcal{J}(h)=[C_T(1-\beta(h))+\alpha-\beta(h)]\exp\left(-\frac{R_\infty}{2c\sigma^2}\right), \quad h\in\mathfrak{h}_T\]
%It follows that
%\[\mathcal{L}(h)=\frac{C_T(1-\beta(h))+\alpha-\beta(h)}{1-\beta(h)}=1+C_T-\frac{1-\alpha}{1-\beta(h)}, \quad h\in\mathfrak{h}_T.\]
%Hence, to minimize $\mathcal{L}(h)$, it suffices to maximize $\beta(h)$ over $h\in\mathfrak{h}_T$.
It follows that
\[\mathcal{L}(h)=\frac{\mathcal{J}(h)}{1-\alpha+\exp\left(\frac{R_\infty}{2c\sigma^2}\right)\mathcal{J}(h)}, \quad h\in\mathfrak{h}_T.\]
Hence, to minimize $\mathcal{L}(h)$, it suffices to minimize $\mathcal{J}(h)$ over $h\in\mathfrak{h}_T$. Note that when $\mathfrak{h}_T\neq\emptyset$, we have $\beta(h^*)\ge\alpha$ and thus, $h^*\in\mathfrak{h}_T$. This implies $h^*=\argmin_{h\in\mathfrak{h}}\mathcal{J}(h)$ is also the (unique) optimizer of $\inf_{h\in\mathfrak{h}_T}\mathcal{J}(h)$. The optimal $H^*$ and  $T^*_\alpha(T)$ are then the one corresponding to $h^*$.

As in the case $T=\infty$, all the statements of the theorem but the last two can now be derived in a straightforward manner.
For the last two, setting $T^{*}_\alpha=T$  yields the minimum budget and the maximum terminal completion rate.

\noindent{\bf Proof of Theorem \ref{thm:welfare}.}
(i) By \eqref{eq:infNEvalue}, maximizing $V_\infty$ over $\mathcal{H}$ is equivalent to minimizing $\mathcal{J}(h)=\int_0^1 h(z)dz$ over $\mathfrak{h}$ (defined in \eqref{eq:feasibleh}) with $\alpha=1$. By Lemma~\ref{lemma:hstar}, $\argmin_{h\in\mathfrak{h}}\mathcal{J}(h)=h^*\equiv \exp\left(-K/(2c\sigma^2)\right)$. It follows that $H^*=\argmax_{H\in\mathcal{H}} V_\infty(H)=-2c\sigma^2 \ln h^*\equiv K$ and $V_\infty(H^*)=K$.

(ii) By \eqref{eq:NEvalue} and Proposition~\ref{prop:expected-effort}, maximizing
%$V$ or $E\int_0^{\tau\wedge T} a^*_t dt$
the welfare or the expected total effort
is equivalent to maximizing the terminal completion rate, {which we have solved in Theorem~\ref{thm:optRT}}.

\bibliographystyle{plainnatnurl}%no URL, no ISBN or ISSN, no doi
\bibliography{tournament}{}

\end{document}